\documentclass[11pt]{amsart}
\usepackage{graphicx} 
\usepackage{csquotes}
\usepackage[scr=boondoxupr]{mathalpha}
\usepackage{orcidlink}
\usepackage[toc]{appendix}
\usepackage[backend=biber,maxbibnames=99,maxcitenames=2,style=alphabetic]{biblatex}

\renewbibmacro{in:}{}
\usepackage{setspace} 
\DeclareFieldFormat[article]{volume}{\mkbibbold{#1}}
\renewbibmacro*{volume+number+eid}{%
  \printfield{volume}%
  \iffieldundef{number}
    {}
    {\addcomma\space \printtext{no.~\printfield{number}}}%
  \setunit{\addcomma\space}%
  \printfield{eid}%
}

\usepackage{geometry}
\usepackage{amsmath,amsthm,amssymb,amsfonts,mathtools}
\usepackage{mathrsfs}
\usepackage{esint}
\numberwithin{equation}{section}

\usepackage{float}
\usepackage{tikz}       
\usepackage{pgfplots}
\usepackage{caption}
\usepackage{subcaption}
\usepackage{tikz-3dplot}
\usetikzlibrary{decorations.markings}

\usepackage{comment}
\usepackage{indentfirst}
\usepackage{url}

\usepackage{hyperref}
\hypersetup{
    colorlinks,
    citecolor=blue,
    filecolor=black,
    linkcolor=red,
    urlcolor=black
}

\newcommand{\lap}{\Delta}
\newcommand{\RR}{\mathbb{R}}

\newcommand{\spt}{\text{spt}}
\newcommand{\dv}[1]{\,\text{d}#1}
\newcommand{\grad}{\nabla}
\newcommand{\sff}{\mathrm{I\!I}}

\newcommand{\essinf}{\text{ess inf}}

\newcommand\vol{\text{Vol}}
\newcommand\lip{\mathrm{Lip}}

\newcommand{\hess}{\mathbf{Hess}}
\newcommand{\tr}{\text{Tr}}

\newcommand{\HH}{\mathcal{H}}
\newcommand{\Tan}{\mathbf{Tan}}
\newcommand{\nor}{\mathbf{Nor}}
\newcommand{\QHE}{\textup{Qheightex}}
\newcommand{\HE}{\textup{Heightex}}

\newcommand{\Diff}[2]{\mathcal{DIFF}^{#1}_{#2}}
\newcommand{\iv}{\mathcal{IV}}
\newcommand{\intvar}[1]{\dv||V||(#1)}
\newcommand{\LL}{\mathcal{L}}

\newcommand{\Lcorner}{{\mathrel{\makebox[7pt][c]{\rule{.4pt}{7.5pt}\rule{5pt}{.4pt}}}}}

\newtheoremstyle{mainthm}{}{}{\itshape}{}{\bfseries}{.}{.5em}{\thmnote{#3}}
\theoremstyle{mainthm}
\newtheorem{maintheorem}{Theorem}

\theoremstyle{plain}
\newtheorem{theorem}{Theorem}[section]
\newtheorem{lemma}[theorem]{Lemma}
\newtheorem{corollary}[theorem]{Corollary}

\newtheorem{proposition}[theorem]{Proposition}

\theoremstyle{remark}
\newtheorem{remark}{Remark}[section]

\theoremstyle{definition}
\newtheorem{definition}{Definition}[section]

\title[Alexandrov's Theorem for Integral Varifolds]{Alexandrov's Theorem for Integral Varifolds and Applications to Geometric Inequalities}
\author{Mitchell Gaudet \orcidlink{0009-0008-7521-8297}}
\address{University of Toronto, Department of Mathematics}
\email{mitchell.gaudet@mail.utoronto.ca}
\date{\today}

\begin{document}

\maketitle

\begin{abstract}
    In this paper, we use optimal mass transportation to derive an isoperimetric inequality and an interior Michael-Simon-Sobolev inequality for weakly twice differentiable integral varifolds of dimension and codimension at least $2$, with locally bounded first variation and $L^2_{loc}$ mean curvature. This generalizes a result of Brendle and Eichmair \cite{Brendle_2023}. In both inequalities the corresponding constants are optimal in codimension 2. As part of this process we also generalize Alexandrov's theorem for convex functions to this setting. This necessitates the use of measure and distribution-valued solutions to certain geometric differential operators, in addition to the theory of $L^p$ Taylor expansions started by \cite{Calder_n_1961}. 
\end{abstract}

\section{Introduction}
The isoperimetric inequality is a classical problem in differential geometry and the calculus of variations which can be roughly stated as follows: Given some notions $A$ (resp. $V$) of area (resp. volume) of some class of objects $B$ (resp. $O$), with some definition of boundary operator $\partial: O\rightarrow B$, are there constants $C,\lambda>0$ such that $(V(S))^\lambda\leq C A(\partial(S))$ for all $S\in O$?

Notice here that using the natural notion of volume of a 1-dimensional manifold, and area of a 0 dimension point, an isoperimetric inequality cannot hold for 1-dimensional manifolds with boundary.

On the other hand, the (interior) Michael-Simon-Sobolev inequality \cite{Michael_1973} can be stated as follows: Given a generalized class $O$ of "generalized manifolds" and for each $S\in O$ define volume measures $\mu_S$, gradients $\delta_S$ acting on smooth functions, and an assignment $O\ni S\mapsto H(S,*)$ of a "generalized curvature" function, does there exist constants $C,\lambda>0$ such that for all smooth, non-negative, and compactly supported functions $f$, such that the inequality $$\left(\int f^{1/\lambda}\dv\mu_S\right)^{\lambda}\leq C\int |\delta_Sf|+f|H(S,*)|\dv\mu_S$$ holds for any $S\in O$?

The boundary version of the Michael-Simon-Sobolev inequality has the same setup, except now we also fix a boundary measure $\nu_S$ for each $S\in O$, and require instead that the inequality $$\left(\int f^{1/\lambda}\dv\mu_S\right)^{\lambda}\leq C\left[\int |\delta_Sf|+f|H(S,*)|\dv\mu_S+\int f\dv\nu_S\right]$$ holds.

If the gradient of any constant function is 0, and we shrink $O$ to also require that the curvature function is identically 0, and finally each $\mu_S,\nu_S$ is compactly supported, then by taking $B$ to be the set of Radon measures (or closed sets by looking at their supports) then a boundary Michael-Simon-Sobolev inequality implies a corresponding isoperimetric inequality.

Moreover, for all of these a natural question to ask is, after fixing $\lambda$, what is the smallest $C>0$ satisfying these inequalities?

It is known that by \cite{43ed2540-f618-3c80-86f7-002da1c14be5} when taking the set $O$ to be integral currents (resp. polyhedral chains, rectifiable currents) of dimension $n$, the set $B$ to be integral currents (resp. polyhedral chains, rectifiable currents) of dimension $n-1$, the operator $\partial$ to be the natural boundary operator, $A,V$ to be the masses, and $\lambda=\frac{n-1}{n}$, the best constant is equal to $\frac{Area(\mathbb S^{n-1})}{\left(\vol(B_1(0))\right)^{(n-1)/n}}$, and in the case the current is rectifiable or integral equality is achieved by some element of $O$.

For subclasses of $n$-varifolds various Michael-Simon-Sobolev inequalities have been found with non-constructive and non-optimal constant $C>0$, with $\lambda=\frac{n-1}{n}$. For this see \cite{allard1972first} for the area integrand, and \cite{dephilippis2026michaelsimoninequalityanisotropicenergies,firester2026anisotropicmichaelsimoninequality,firester2026anisotropicmichaelsimoninequalitysurfaces} for various conditions on anisotropic integrands. In all of these, the boundary measure is the singular part of the (an)isotropic first variation.

Although we have framed this as having the smallest constant on the right-hand side, as is standard in most of the literature, we will follow the convention of \cite{brendle2021isoperimetric,Brendle_2023}, so our constants will be on the left, and we will want to find the \textit{largest} $C$. 

\subsection{Interior Inequalities}

When taking $O$ to be the set of smooth $n$-dimensional submanifolds of $\RR^{n+m}$ with boundary, and $B$ to be the set of smooth $(n-1)$-dimensional submanifolds of $\RR^{n+m}$, with $\partial,V,A$ being the natural boundary, volume, and area, optimal transportation methods have been used to great effect. In \cite[Section~1.6]{guillen2010lecturesoptimaltransportationgeometry} a short proof in the case of $m=0$ is given. For general $m\geq 0$ \cite{CASTILLON201079} uses optimal transportation to a linear subspace on $\RR^{n+m}$ to get a non-sharp interior Michael-Simon-Sobolev, and a sharp weighted one, and then deduce an isoperimetric inequality from that.

More recently, Brendle \cite{brendle2021isoperimetric} gave a proof of a boundary Michael-Simon-Sobolev inequality using an ABP estimate, and later Brendle and Eichmair \cite{Brendle_2023} gave a proof of the isoperimetric inequality in codimension at least 2 using optimal transport. In codimension $2$ the constants obtained are optimal.

The first result of this paper is a generalization of the interior Michael-Simon-Sobolev using the methods of \cite{Brendle_2023} to the class of integral varifolds with locally bounded first variation, locally $L^2$ mean curvature, and such that the support is weakly twice-differentiable (cf. definition \ref{def:twicediff}).

In the next several theorems, due to this paper using the argument in section 3 of \cite{Brendle_2023} we are restricted to $n,m\geq 2$. In other words, the dimension and codimension are at least 2. Although our methods in sections 6 and 7 rely on Brendle and Eichmair's argument, it would be interesting to see if optimal transport can be used for codimension 1.

We set $$C_{n,m}=n\left(\frac{(n+m)\vol(B^{n+m}_1(0))}{m\vol(B^{m}_1(0))}\right)^{1/n}$$ as in \cite[Corollary~2]{Brendle_2023} to get:

\begin{maintheorem}[Theorem 1]
    \label{Thm:Theorem1}
    Let $V\in\iv_n(\RR^{n+m})$, with $n,m\geq 2$, be such that
    \begin{enumerate}
        \item The first variation is a Radon measure.
        \item $\spt(||V||)$ is compact.
        \item If we define $W=\spt(||V||)-\spt(||\delta V||_{sing})$ then $H(V,*)\in L^2_{loc}(||V||\Lcorner W,\RR^{n+m})$.
        \item $V$ is weakly twice-differentiable.
    \end{enumerate}
    Moreover, assume $0\leq F\in C^\infty(\RR^{n+m})$ has support compactly contained in $W$.

    Then \begin{align*}
        C_{n,m}\left(\int F^{n/(n-1)}\intvar{x}\right)^{(n-1)/n}&\leq\int |\grad^VF|(x)+F(x)|H(V,x)|\intvar{x}
        \\
        &=\int |\grad^VF|\dv||V||+\int F\dv||\delta V||.
    \end{align*}
\end{maintheorem}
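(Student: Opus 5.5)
We follow the optimal transport argument of \cite[Section~3]{Brendle_2023}, replacing each smooth-submanifold ingredient with its varifold counterpart. By scaling we may assume
\[
\int |\grad^VF|+F|H(V,\cdot)|\dv||V||=n\int F^{n/(n-1)}\dv||V||,
\]
and we must then show $\int F^{n/(n-1)}\dv||V||\ge C_{n,m}^{n}n^{-n}$ times the appropriate normalisation, exactly as in \cite[Corollary~2]{Brendle_2023}.

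\emph{Transport set-up.} Let $\mu=F^{n/(n-1)}||V||$ on the rectifiable set $\Sigma=\spt||V||$, and let $\nu$ be the uniform probability measure on the ball $B^{n+m}_1(0)$. The source $\mu$ is $\HH^n$-rectifiable with $L^\infty$ density, and the target is absolutely continuous. We solve the Kantorovich problem with quadratic cost and obtain a convex potential $u$ on $\RR^{n+m}$ such that $\nabla u$ pushes $\nu$ onto $\mu$. The inverse map, from $\Sigma$ onto the ball, is then realised through the $c$-conjugate, restricted to $\Sigma$.

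Following Brendle and Eichmair, the relevant object is the map
\[
\Phi(x,y)=\nabla^\Sigma u(x)+y,\qquad y\in \nor_x\Sigma,\ |y|<1,
\]
where $u$ is now the potential restricted to $\Sigma$ (a semiconvex function on $\Sigma$). The inequality follows from two facts:
\begin{enumerate}
\item the image of the set where $D^2_\Sigma u+\langle \sff,y\rangle\ge0$ covers $B_1$ up to a null set;
\item the Jacobian estimate
\[
\det\bigl(D^2_\Sigma u-\langle\sff(x),y\rangle\bigr)\le \Bigl(\tfrac{\Delta_\Sigma u-\langle H,y\rangle}{n}\Bigr)^n .
\]
\end{enumerate}

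\emph{Key analytic step: Alexandrov's theorem on $V$.} We need that $u|_\Sigma$ is twice differentiable, in the $L^p$-Taylor (Calder\'on--Zygmund) sense, at $||V||$-a.e.\ point. We also need that its pointwise Hessian, together with the weak second fundamental form supplied by weak twice-differentiability (Definition~\ref{def:twicediff}), controls the distributional Laplacian from above. Concretely, we aim to show that the distribution $\Delta^V u$, defined via the first variation formula with the $H$ term, is a measure whose absolutely continuous part dominates the pointwise trace, with a nonnegative singular part. This is the varifold Alexandrov theorem announced in the abstract. We expect this to be the main obstacle, because $\Sigma$ is not a manifold, so semiconvexity of $u$ along $\Sigma$ must be derived from convexity of $u$ in the ambient space together with the $L^2$ approximate second fundamental form.

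\emph{Using the PDE.} Next we use the transport equation. In the smooth case one has $\operatorname{div}_\Sigma(F^{n/(n-1)}\nabla u)$ relations. Here we obtain the Monge--Amp\`ere-type equality a.e.\ from area formulas for $\Phi$ on the rectifiable normal bundle. The area formula for Lipschitz maps on rectifiable sets applies after restricting to Lusin-type sets on which the $L^p$ Taylor expansion is uniform.

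Integrating $\Delta^V u$ against $F$ and using the first variation formula, restricted to $W$ since $\spt F\Subset W$ kills the singular part of $\delta V$, gives
\[
\int F\,\Delta u\le \int |\grad^VF|+F|H|\dv||V||.
\]
The inequality has the correct sign because the singular part of the distributional Laplacian of a semiconvex function is nonnegative.

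\emph{Conclusion.} Combining, we get
\[
|B^{n+m}_1|\,(\text{normalised})\le \int_\Sigma\int_{\nor_x\Sigma\cap\{|y|<1\}} \Bigl(\tfrac{\Delta u-\langle H,y\rangle}{n}\Bigr)^n\dv y\dv||V||,
\]
and the integral over normal $y$ is evaluated as in \cite{Brendle_2023}, using $m\ge2$ in their averaging lemma. This yields the constant $C_{n,m}$. Finally, the identity between the two right-hand sides in Theorem~\ref{Thm:Theorem1} is the statement that on $W$ we have $\delta V=-H||V||$, so that $||\delta V||\Lcorner W=|H|\,||V||$.
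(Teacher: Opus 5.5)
Most of your ingredients match the paper's: the transport potential $f$ with $|\grad^V f|\le 1$, the normal map $\Phi$, AM--GM, and the varifold Alexandrov theorem with nonnegative singular part, applied to $F$ supported in $W$. The way you assemble them, however, does not close.

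\textbf{The final step does not combine.} Your final display is an ABP-type global bound of $\nu(B_1)$ by $\int_\Sigma\int\bigl(\frac{\lap u-\langle H,y\rangle}{n}\bigr)^n\,dy\,d\|V\|$. It carries no weight $F$ and involves the $n$-th power of $\lap u$. So it cannot be combined with the integral inequality $\int F\lap u\le\int|\grad^VF|+F|H|$.

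In Brendle's ABP proof this is closed by solving $\mathrm{div}(F\grad u)=nF^{n/(n-1)}-|\grad F|-F|H|$, which is what your normalization is for. That equation gives, on the contact set, the pointwise upper bound $\lap u-\langle H,y\rangle\le nF^{1/(n-1)}$. A transport potential satisfies no such equation. Transport instead gives a pointwise \emph{lower} bound on $\lap u$ in terms of $F$.

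This is how the paper proceeds:
\begin{enumerate}
\item Normalize $\int F^{n/(n-1)}\,d\|V\|=1$, so that $\mu$ is a probability measure.
\item Run the Brendle--Eichmair local comparison $\mu(\bar B^{M_i}_r(x))\le\nu(G_r)$ on each $C^2$ piece $M_i$ of the $C^2$-rectifiable decomposition, on which $H(V)=H(M_i)$ a.e. This gives, at $\|V\|$-a.e.\ $x$,
\[
\theta^V F^{n/(n-1)}\le\int_S\det_{T_xV}\bigl(\hess^V(f)-\sff_x\cdot z\bigr)\rho\,dz\le\alpha\Bigl(\frac{\tr_{T_xV}\hess^V(f)+|H|}{n}\Bigr)^n .
\]
\item Take $n$-th roots and multiply by $F(\theta^V)^{(n-1)/n}$.
\item Use $(\theta^V)^{(n-1)/n}\le\theta^V$, which relies on integer multiplicity; your outline omits this step.
\item Integrate against $\|V\|$, and only then apply Alexandrov's theorem together with $|\grad^Vf|\le 1$.
\end{enumerate}
Only this inequality is needed, not an a.e.\ Monge--Amp\`ere equality via area formulas on Lusin sets. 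It rests on pointwise Alexandrov expansions of $f$ along each $M_i$. The $L^2$ Taylor expansions on $V$ serve only to identify the absolutely continuous part of $\lap_V f$.

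\textbf{The uniform target measure cannot give $C_{n,m}$.} With $\rho\equiv|B^{n+m}|^{-1}$ on $B^{n+m}_1$, the normal integral $\int_{\{|z|^2\le 1-|\grad^Vf|^2\}}\rho\,dz$ is largest when $\grad^Vf=0$. Hence $\alpha=|B^m|/|B^{n+m}|$, and you only obtain the constant $n(|B^{n+m}|/|B^m|)^{1/n}<C_{n,m}$. Integrating over $\{|y|<1\}$ involves no $m\ge 2$ averaging at all.

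The sharp constant needs target measures concentrating near the unit sphere, for example uniform on $\{\sigma<|\xi|<1\}$. For $m\ge 2$, the set $\{z\in\RR^m:\sigma^2<t+|z|^2<1\}$ has measure at most $\frac m2|B^m|(1-\sigma^2)$, uniformly in $t$. Therefore $\alpha\to\frac{m|B^m|}{(n+m)|B^{n+m}|}$ as $\sigma\uparrow 1$. This is the paper's sequence $\rho_i$ with $n/\alpha_i^{1/n}\to C_{n,m}$.
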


In the context of varifolds $V$ satisfying the assumptions $1-4$ this is optimal when $m=2$.

The idea to prove this theorem is essentially the same as \cite{Brendle_2023}, which is done in section 6, while the theorem is proved in section 7. However, in adapting it over we must reprove Alexandrov's theorem (cf. \cite[Theorem~14.1]{Villani_2009}) on the distributional Laplacian and twice-differentiability of convex functions to our setting, which is what sections 2-5 consist of.

Section 2 covers the necessary background material on notation, measure theory and optimal transport, while section 3 contains background and useful material on varifolds which are $C^2$ rectifiable in the sense of \cite{anzellotti1994k}, in addition to the basics of twice weakly differentiable varifolds.

Section 4 then shows that if a function possesses a Taylor expansion in the sense of Calder\'on-Zygmund \cite{Calder_n_1961}, then certain measure-valued differential equations can be represented in terms of this expansion. This is then applied to the case of the Laplacian on a varifold in section 5 to deduce Alexandrov's theorem.

The result \cite[Theorem~19.11]{menne2024sharplowerboundmean} (cf. also \cite[Corollary~19.13]{menne2024sharplowerboundmean}) combined with the fact that a $C^2$ submanifold is weakly twice differentiable at all of its points immediately yields that if we can upgrade $L^2_{loc}$ to $L^p_{loc},p>n$ then the weak twice differentiability assumption holds.

Therefore we also have,
\begin{corollary}
    Suppose that $V\in\iv_n(\RR^{n+m})$, with $n,m\geq 2$, is such that
    \begin{enumerate}
        \item The first variation is a Radon measure.
        \item $\spt(||V||)$ is compact.
        \item In the notation of theorem \ref{Thm:Theorem1}, $H(V,*)\in L^p_{loc}(||V||\Lcorner W,\RR^{n+m}),p>n$.
    \end{enumerate}
    Then the conclusions of theorem \ref{Thm:Theorem1} hold.
\end{corollary}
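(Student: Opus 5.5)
The plan is to reduce the corollary to Theorem \ref{Thm:Theorem1}. Hypotheses 1 and 2 are shared, so we only need to produce hypotheses 3 and 4 from the $L^p_{loc}$ assumption with $p>n$.

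Hypothesis 3 is immediate. $L^p_{loc}(||V||\Lcorner W)\subset L^2_{loc}(||V||\Lcorner W)$ on compact subsets of $W$, because $p>n\geq 2$ and $||V||$ is a Radon measure, so Hölder's inequality applies on each compact $K\subset W$.

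The substantive step is hypothesis 4, weak twice-differentiability. Here I would invoke \cite[Theorem~19.11]{menne2024sharplowerboundmean}, as the introduction indicates (see also \cite[Corollary~19.13]{menne2024sharplowerboundmean}). That result is a regularity statement: for an integral varifold whose generalized mean curvature lies in $L^p_{loc}$ with $p>n$ on an open set, the support is, at $||V||$-almost every point of that set, locally a finite union of $C^2$ (graphical) submanifolds. Equivalently, it admits a second-order approximation in the sense needed. I would apply it on the open set $U=\RR^{n+m}\setminus\spt(||\delta V||_{sing})$. On $U$ the first variation is absolutely continuous, with density $H(V,*)\in L^p_{loc}$, so $\delta V$ is represented by $-H\,||V||$ there. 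One point needs checking: the relevant set is $W=\spt(||V||)\cap U$, i.e. $W=\spt(||V||)-\spt(||\delta V||_{sing})$, and the hypotheses of Menne's theorem are local, so they hold on $U$.

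Next comes the observation that a $C^2$ submanifold is weakly twice differentiable at each of its points. This lets us transfer the conclusion of Menne's theorem to Definition \ref{def:twicediff}. The conclusion is that $V$ is weakly twice differentiable at $||V||$-a.e. point of $W$, which is how I read the definition restricted to where the theorem is used; the test function $F$ is supported compactly inside $W$. With hypotheses 1 to 4 verified, Theorem \ref{Thm:Theorem1} gives the inequality.

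The main obstacle is matching notions. We must make sure that ``pointwise $C^2$-approximability of the support'' in Menne's theorem implies the paper's Definition \ref{def:twicediff}. That definition is formulated in terms of the varifold, possibly near points where several sheets meet with multiplicity. If it demands behavior at every point rather than almost every point, or on all of $\spt(||V||)$ rather than on $W$, I would first check that the proof of Theorem \ref{Thm:Theorem1} only uses it $||V||$-a.e. on the support of $F$. If so, I would restate the reduction accordingly.
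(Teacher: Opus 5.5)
Your reduction is the paper's own one-line argument. H\"older gives $L^p_{loc}\subset L^2_{loc}$, and weak twice differentiability off $\spt(||\delta V||_{sing})$ follows from \cite[Theorem~19.11, Corollary~19.13]{menne2024sharplowerboundmean} together with the fact that a $C^2$ submanifold is weakly twice differentiable at each of its points.

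Do correct your paraphrase of the cited theorem, though. It is a pointwise statement, not a claim that the support is locally a finite union of $C^2$ sheets. That stronger claim fails in general when $H$ is merely in $L^p$; think of graphs of $W^{2,p}$ functions that are nowhere locally $C^2$. The pointwise statement is that at $||V||$-a.e.\ $x$ there is an $n$-dimensional $C^2$ submanifold $B\ni x$ with $r^{-2}\sup_{y\in B_r(x)\cap\spt(||V||)}\mathrm{dist}(y,B)\to0$, and at a.e.\ such $x$ this $B$ has tangent plane $T_xV$ and second fundamental form $\sff_{V,x}$. This pointwise version is exactly what your transfer step to Definition \ref{def:twicediff} needs.
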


\subsection{Inequalities at the Boundary}

One problem that was mentioned in \cite{43ed2540-f618-3c80-86f7-002da1c14be5} was: what is the right boundary set and area functional in the case of varifolds, for the purposes of optimal constants in the isoperimetric inequality?

It is clear from the prior theorem that the boundary should lie on $\spt(||\delta V||_{sing})$ , but it is not clear what the mass of this set is. Taking inspiration from another isoperimetric inequality due to Federer, \cite[Theorem~3.2.43]{Federer_1996}, we define the boundary measure using a modified Minkowski content.

We define $$||\partial V||^+(F)\coloneqq\limsup_{r\downarrow0}\frac{1}{r}\int_{0<dist(x,\spt(||\delta V||_{sing}))<r} F\intvar{x}$$ and $$||\partial V||^-(F)\coloneqq\liminf_{r\downarrow0}\frac{1}{r}\int_{0<dist(x,\spt(||\delta V||_{sing}))<r} F\intvar{x}.$$

We then get a boundary Michael-Simon-Sobolev inequality using the Hahn-Banach theorem, which takes up the content of section 9:

\begin{maintheorem}[Theorem 2]
    \label{Thm:Theorem2}
    Let $V$ be as in theorem \ref{Thm:Theorem1}. Suppose that $||\partial V||^+(1)<\infty$. Then there exists a Radon measure $\gamma^V$ such that:\begin{enumerate}
        \item $\spt(\gamma^V)\subseteq\spt(||\delta V||_{sing})$.
        \item For any continuous function $F$ both $||\partial V||^-(F),||\partial V||^+(F)$ are finite, and $$||\partial V||^-(F)\leq\gamma^V(F)\leq||\partial V||^+(F).$$
        \item If $F$ is moreover non-negative and smooth, then \begin{align*}
            C_{n,m}\left(\int_{\RR^{n+m}-\spt(||\delta V||_{sing})} F^{n/(n-1)}\intvar{x}\right)^{(n-1)/n}&\leq\int |\grad^VF|(x)+F(x)|H(V,x)|\intvar{x}
            \\
            &+\int F\dv\gamma^V.
        \end{align*}
    \end{enumerate}
\end{maintheorem}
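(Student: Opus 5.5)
The plan is to obtain Theorem 2 from Theorem 1 by cutting the test function off near the singular set with a distance cutoff, and then to produce $\gamma^V$ by a Hahn--Banach sandwich between the two Minkowski-type functionals. Write $S=\spt(||\delta V||_{sing})$ and $d(x)=dist(x,S)$. For $r>0$ put $\eta_r=\min(1,d/r)$, which is Lipschitz with $|\grad\eta_r|\le 1/r$ and $\grad\eta_r$ supported in $\{0<d<r\}$, and mollify it slightly so that it is smooth.

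\textbf{Construction of $\gamma^V$.} Let $X=C(\spt||V||)$, with $\spt||V||$ compact by hypothesis (2). The functional $p(F)=||\partial V||^+(F^+)$ is sublinear, since $\limsup$ is subadditive and positively homogeneous. It is also finite, because $|p(F)|\le\sup|F|\,||\partial V||^+(1)<\infty$. The same bound gives finiteness of $||\partial V||^\pm(F)$ for every continuous $F$.

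Define $\ell$ on $\mathbb{R}\cdot 1$ by $\ell(c)=c\,L$ for some $L$ between $||\partial V||^-(1)$ and $||\partial V||^+(1)$. By Hahn--Banach, extend $\ell$ to a linear $\gamma$ with $\gamma\le p$. Then $\gamma(F)\le p(F)\le0$ whenever $F\le0$, so $\gamma$ is positive. By Riesz, $\gamma$ is a Radon measure.

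For the sandwich, one cleaner option is to dominate by $q(F)=||\partial V||^+(F)$ itself, which is sublinear. This gives $\gamma(F)\le||\partial V||^+(F)$ and $\gamma(F)=-\gamma(-F)\ge-||\partial V||^+(-F)=||\partial V||^-(F)$. Positivity then follows because $F\ge0$ implies $||\partial V||^-(F)\ge0$. This gives item (2).

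Item (1) follows from item (2). If $F$ is supported away from $S$, then $\{0<d<r\}\cap\spt F=\emptyset$ for small $r$, so both bounds vanish and $\gamma(F)=0$.

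\textbf{The inequality.} Fix $F\ge0$ smooth and apply Theorem 1 to $F\eta_r$. Its support is compactly contained in $W$ up to the mollification, because $d\ge r/2$ there and $W$ contains $\spt||V||\setminus S$. We get
$$C_{n,m}\Big(\int (F\eta_r)^{n/(n-1)}\dv||V||\Big)^{(n-1)/n}\le\int\eta_r\big(|\grad^VF|+F|H|\big)\dv||V||+\frac1r\int_{0<d<r}F\dv||V||.$$
Here we use $|\grad^V\eta_r|\le|\grad\eta_r|\le1/r$. The integral over $S$ of $\eta_r$-terms vanishes since $\eta_r=0$ on $S$.

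Now choose a sequence $r_k\downarrow0$ realizing the $\limsup$. Monotone convergence on the left gives the integral over $\RR^{n+m}\setminus S$. The first term on the right is bounded by the full integral. The last term tends to at most $||\partial V||^+(F)$.

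This yields the inequality with $||\partial V||^+(F)$ in place of $\gamma^V(F)$, which is weaker than the claim. To get $\gamma^V$ itself we need the inequality with $\gamma^V(F)$.

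\textbf{Main obstacle.} The difficulty is that a single Hahn--Banach $\gamma$ sandwiched between the two functionals need not satisfy $\gamma(F)\ge$ the needed quantity.

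The fix is to build $\gamma$ more carefully. Let $G(F)$ be the deficit $C_{n,m}(\cdots)^{(n-1)/n}-\int(|\grad^VF|+F|H|)$, which is superadditive-like, being concave in the norm part. Choose $\gamma$ with $\gamma\ge$ a superlinear minorant and $\gamma\le||\partial V||^+$.

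Concretely, I would try the sandwich form of Hahn--Banach (Mazur--Orlicz): find a linear $\gamma$ with $\sigma\le\gamma\le p$, where $p=||\partial V||^+$ and $\sigma(F)=\max\big(||\partial V||^-(F),G(F^+)\big)$ on nonnegative functions. This requires verifying $\sum\sigma(F_i)\le p(\sum F_i)$. That verification is where the work lies. It should follow by running the cutoff argument for $\sum F_i$ with a common sequence $r_k$, using Minkowski's inequality for the $L^{n/(n-1)}$ norm and subadditivity of the gradient terms.

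If this fails, a fallback is to take a weak-$*$ limit point of the measures $\frac1{r_k}F\,||V||\Lcorner\{0<d<r_k\}$. These are uniformly bounded by $||\partial V||^+(1)$. Any limit $\gamma^V$ is supported on $S$, satisfies the inequality by the computation above, and lies between $||\partial V||^-$ and $||\partial V||^+$ on continuous $F$. This is essentially the same object the Hahn--Banach argument is meant to produce.
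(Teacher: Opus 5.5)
\textbf{The main line has a spurious obstacle.} Your Hahn--Banach construction is the paper's own: you dominate by $||\partial V||^+$ itself, get $||\partial V||^-(F)\le\gamma(F)\le||\partial V||^+(F)$, deduce positivity, apply Riesz, and read off the support. But the ``main obstacle'' you raise afterwards does not exist. Your claim that a sandwiched $\gamma$ need not satisfy the required bound is false.

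The observation you are missing concerns the limits in your cutoff inequality. The left-hand side converges monotonically as $r\downarrow0$, and the first term on the right is bounded by $\int(|\grad^VF|+F|H|)\,d||V||$, independently of $r$. So you are not forced to use a sequence realizing the $\limsup$ of the collar term. Take instead $r_k\downarrow0$ with $\frac1{r_k}\int_{0<d<r_k}F\,d||V||\to||\partial V||^-(F)$. Passing to the limit along this sequence gives
\[
C_{n,m}\Big(\int_{\RR^{n+m}\setminus S}F^{n/(n-1)}\,d||V||\Big)^{(n-1)/n}\le\int\big(|\grad^VF|+F|H(V,\cdot)|\big)\,d||V||+||\partial V||^-(F).
\]
Since $||\partial V||^-(F)\le\gamma^V(F)$ for every $\gamma^V$ dominated by $||\partial V||^+$, item (3) holds for every such measure. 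This is exactly how the paper concludes. The Mazur--Orlicz construction is therefore unnecessary, and you never verified its key inequality $\sum\sigma(F_i)\le p(\sum F_i)$ anyway.

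\textbf{Your fallback is a correct, genuinely different proof.} You need to drop the factor $F$ from the measures; otherwise the limit depends on $F$ and does not define a single $\gamma^V$. The argument then runs as follows.
\begin{enumerate}
\item For small $r$, the measures $\mu_r=\frac1r||V||\Lcorner\{0<d<r\}$ have mass at most $||\partial V||^+(1)+1$ and live in the compact set $\spt||V||$.
\item Hence some sequence $\mu_{r_k}$ converges weakly-$*$ to a Radon measure $\gamma^V$.
\item Items (1) and (2) hold because $\gamma^V(F)=\lim_k\mu_{r_k}(F)$ is a subsequential limit of the collar averages.
\item Item (3) follows by passing to the limit in the cutoff inequality along the same sequence.
\end{enumerate}
This route replaces Hahn--Banach and Riesz by weak-$*$ compactness, and it produces an explicit $\gamma^V$ as a limit of normalized collar measures. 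The paper's route, with the $\liminf$ step, instead shows that every measure in the sandwich satisfies (3).

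\textbf{A technical point about your cutoff.} The function $\min(1,d/r)$, even after mollification, is not supported away from $S$, contrary to your remark that $d\ge r/2$ there. Instead use $\min(1,(d-\varepsilon)^+/r)$, mollify at a scale below $\varepsilon$, and let $\varepsilon\downarrow0$; the paper's $\beta_{r,\varepsilon}$ plays this role. This only replaces $\{0<d<r\}$ by $\{0<d\le r\}$ in the collar term. That change is harmless: for instance, choose $r_k$ outside the countably many $r$ with $||V||(\{d=r\})>0$.
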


This yields an isoperimetric inequality if we set the area of the boundary to be equal to $\gamma^V(\spt(||\delta V||_{sing}))$, and the volume to be $||V||(\RR^{n+m}-\spt(||\delta V||_{sing}))$, and in codimension 2 this inequality is optimal.

It is unclear here how this mass relates to the mass of the singular part of the first variation in general. However, using \cite{Allard_1975bnd} we may deduce a relationship in a special case. Let $B$ be a compact $C^2$ $(n-1)$-dimensional submanifold of $\RR^{n+m}$ and define $d_B=dist(*,B)$.

To actually do the estimates, we will take inspiration from the case where $V$ is just a smooth manifold with boundary $B$. In this case, the following $G$ is a smooth extension of the Riemannian distance on $V$ to $B$, and $\varepsilon_G=1$.

\begin{definition}
    A modified normal field for the pair $(V,B)$ is a $C^1$ function $G:\RR^{n+m}\rightarrow\RR$ such that:
    \begin{enumerate}
        \item Defining $\essinf$ as the essential infimum with respect to the measure $||V||$ we have$$\varepsilon_G\coloneqq\lim_{r\downarrow 0}[\essinf(1_{\{0<d_B(*)<r\}}\grad^Vd_B\cdot DG)]>0$$
        \item $DG(x)$ has unit length for all $x\in B$.
    \end{enumerate}
\end{definition}

\begin{maintheorem}[Theorem 3]
    \label{Thm:Theorem3}
    If everything is as in theorem \ref{Thm:Theorem2} and moreover $\spt(||\delta V||_{sing})$ is equal to $B$, then $||\partial V||^+=||\partial V||^-$, $||V||(B)=0$, and the measure $\gamma^V$ is unique. In particular,
    $$\int_{\RR^{n+m}-\spt(||\delta V||_{sing})} F^{n/(n-1)}\intvar{x}=\int_{\RR^{n+m}} F^{n/(n-1)}\intvar{x}.$$

    If, moreover, $\varepsilon_G>0$ then $\gamma^V\leq \frac{1}{\varepsilon_G}||\delta V||_{sing}$.
\end{maintheorem}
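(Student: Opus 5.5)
The plan is to use Allard's boundary regularity theory \cite{Allard_1975bnd} near $B$ to get the structure of $||V||$ in a tubular neighbourhood of $B$, and then to read off all claims from that structure. First, $||V||(B)=0$: since $V$ is an integral $n$-varifold, $||V||$ is absolutely continuous with respect to $\HH^n$, and $B$ is a compact $C^2$ $(n-1)$-manifold, so $\HH^n(B)=0$. This gives the displayed equality of integrals immediately.

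For $||\partial V||^+=||\partial V||^-$, I would show that for each continuous $F$ the limit
$$\lim_{r\downarrow 0}\frac1r\int_{0<d_B<r}F\,\intvar{x}$$
exists. The approach is the coarea formula on the rectifiable set: for $||V||$-a.e. $x$,
$$\int_{0<d_B<r}F\,|\grad^V d_B|\,\intvar{x}=\int_0^r\int_{\{d_B=t\}}F\,\theta\,\dv\HH^{n-1}\dv t .$$
Using the boundary monotonicity formula and Allard's boundary result, tangent cones at points of $B$ are unions of half-planes whose boundary is $T_pB$, with integer multiplicities. Consequently, for small $t$, $\HH^{n-1}\Lcorner(\theta\cdot\{d_B=t\})$ converges weakly to $\sigma\,\HH^{n-1}\Lcorner B$, where $\sigma$ is the sum of the half-plane densities. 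In addition, $|\grad^V d_B|\to 1$ in $||V||$-measure near $B$. Averaging in $t$ then gives that the limit exists and equals $\int_B F\sigma\,\dv\HH^{n-1}$. Then $||\partial V||^\pm$ coincide as a linear functional on $C_c$, and item 2 of Theorem \ref{Thm:Theorem2} forces $\gamma^V$ to equal this functional, which gives uniqueness.

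For the bound $\gamma^V\le\varepsilon_G^{-1}||\delta V||_{sing}$, I would test the first variation with a cutoff vector field. Fix $F\ge0$ continuous and smooth, and set $X=F\,\phi_r(d_B)\,DG$, where $\phi_r(s)=\min(1,s/r)$ (smoothed if necessary). Then
$$\delta V(X)=-\int X\cdot H\,\intvar{x}+\int X\cdot\nu\,\dv||\delta V||_{sing}.$$
The left side equals
$$\int \operatorname{div}^V X\,\intvar{x}=\frac1r\int_{0<d_B<r}F\,\grad^Vd_B\cdot DG\,\intvar{x}+\int\phi_r(d_B)\operatorname{div}^V(F\,DG)\,\intvar{x}.$$
As $r\to0$, the second term tends to $\int\operatorname{div}^V(F\,DG)\,\dv||V||$. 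The first term is at least $(\varepsilon_G-o(1))\,\frac1r\int_{0<d_B<r}F\,\dv||V||$, which tends to $\varepsilon_G\gamma^V(F)$. On the right side, $\phi_r(d_B)=0$ on $B$ kills the singular term, since $||\delta V||_{sing}$ is concentrated on $B$. This comparison alone does not produce the singular measure, so I would also compare with the field $F\,DG$ without the cutoff. Its first variation contains $\int F\,DG\cdot\nu\,\dv||\delta V||_{sing}\le\int F\,\dv||\delta V||_{sing}$ because $|DG|=1$ on $B$. Subtracting the two identities, the $H$ terms differ by $\int(1-\phi_r)F\,DG\cdot H$, which tends to $0$ since $H\in L^1$ and $||V||(B)=0$. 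This yields
$$\varepsilon_G\gamma^V(F)\le\int F\,DG\cdot\nu\,\dv||\delta V||_{sing}\le||\delta V||_{sing}(F).$$
Extending from smooth to continuous $F$ by density then gives the inequality of measures.

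I expect the main obstacle to be the existence of the Minkowski-type limit, i.e. $||\partial V||^+=||\partial V||^-$. The difficulty is justifying that Allard's boundary regularity applies under only $L^2$ mean curvature and a Radon first variation, and controlling the multiplicity near $B$. If full regularity is unavailable, I would instead try to obtain the limit from the first-variation identity above with $G$ replaced by $d_B$ itself, expressing $\frac1r\int_{0<d_B<r}F$ as a difference of terms that converge.
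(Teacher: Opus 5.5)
\textbf{The gap: existence of the Minkowski limit.} The gap is in the claim $\|\partial V\|^+=\|\partial V\|^-$, i.e.\ the existence of $\lim_{r\downarrow0}r^{-1}\int_{\{0<d_B<r\}}F\,d\|V\|$. Uniqueness of $\gamma^V$ rests on it. The paper does not derive this limit; it takes it directly from Allard's Lemma 3.1(1). Your tangent-cone substitute does not work, for three reasons.
\begin{enumerate}
\item Under the present hypotheses $H$ is only $L^2_{loc}$ on $W$ and merely $L^1$ up to $B$. So the boundary monotonicity formula, which needs control of $H$ up to $B$, is not available. With it go the existence and conical structure of boundary tangent cones.
\item Even when tangent cones exist, they need not be unions of half-planes with edge $T_pB$. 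Take the flat disk bounded by $B$ plus a small round sphere through some $p\in B$ whose tangent plane does not contain $T_pB$; this satisfies every hypothesis of the theorem. Allard's boundary regularity only applies at density close to $1/2$.
\item Pointwise, possibly non-unique, blow-ups give no weak convergence of the slice measures $\theta\,\HH^{n-1}\Lcorner\{d_B=t\}$ unless you have uniformity in $p$ and $t$.
\end{enumerate}

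Your fallback also falls short. Testing $\delta V$ with $\phi_r(d_B)Y$ only yields limits of $r^{-1}\int_{\{0<d_B<r\}}Y\cdot\grad^V d_B\,d\|V\|$. These are limits of the vector measures $r^{-1}\grad^V d_B\,\|V\|\Lcorner\{0<d_B<r\}$, which converge to $-(\delta V)_{sing}$. The scalar measures, by contrast, can carry mass that $\delta V$ cannot see. For example, a smooth closed sheet of $V$ containing $B$ adds $2\,\HH^{n-1}\Lcorner B$ to $\|\partial V\|$ and nothing to $(\delta V)_{sing}$. Taking $Y=F\,Dd_B$ would at best control $r^{-1}\int_{\{0<d_B<r\}}F|\grad^V d_B|^2\,d\|V\|$. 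You would still need $r^{-1}\int_{\{0<d_B<r\}}F(1-|\grad^V d_B|^2)\,d\|V\|\to0$, which is the missing fact itself. What closes this step is the citation of Allard's Lemma 3.1(1), as in the paper, not a blow-up analysis.

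\textbf{The remaining parts.} These are the paper's argument:
\begin{itemize}
\item $\|V\|(B)=0$ follows from $\HH^n(B)=0$.
\item Uniqueness of $\gamma^V$ follows from item 2 of Theorem 2 once the limit exists.
\item For the bound, comparing the first variation on $F\phi_r(d_B)DG$ and on $F\,DG$ is exactly the paper's use of the proof of Allard's Lemma 3.1(2).
\end{itemize}
Watch the sign in the bound. With $\delta V(X)=-\int X\cdot H\,d\|V\|+\int X\cdot\nu\,d\|\delta V\|_{sing}$, your two identities give $\lim_{r\downarrow0}r^{-1}\int_{\{0<d_B<r\}}F\,DG\cdot\grad^V d_B\,d\|V\|=-\int F\,DG\cdot\nu\,d\|\delta V\|_{sing}$. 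So the middle term of your final chain needs a minus sign. This is harmless, since you then bound it using $|DG|=1$ on $B$.

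This part does not actually need the Minkowski limit to exist. Replace ``tends to $\varepsilon_G\gamma^V(F)$'' by a $\limsup$. This gives $\varepsilon_G\|\partial V\|^+(F)\le\|\delta V\|_{sing}(F)$, and $\gamma^V\le\|\partial V\|^+$ by item 2 of Theorem 2.

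Finally, $G$ is only assumed $C^1$, so writing $\mathrm{div}^V(F\,DG)$ requires approximating $G$ in $C^1$. This is harmless because $r^{-1}\|V\|(\{0<d_B<r\})$ stays bounded as $r\downarrow0$.
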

The proof of the needed estimates is covered in section 8. In the notation of Allard our $\gamma^V$ is equal to $\gamma_1$ in \cite[Section~2]{Allard_1975bnd}.

Theorem \ref{Thm:Theorem3}, and the remarks preceding it, allow us to immediately conclude with one last isoperimetric inequality, which is once again optimal in codimension 2.

\begin{corollary}
    Let $B$ be a compact, orientable, smooth, $(n-1)$-dimensional submanifold of Euclidean space $\RR^{m+n}$, with $n,n\geq 2$, and suppose that $V$ is an integral $n$-varifold which is supported on a smooth manifold with boundary $B$ with multiplicity $1$ near $B$, and is stationary on the complement of $B$. Then $$C_{n,m}||V||^{(n-1)/n}(\RR^{n+m})\leq \HH^{n-1}(B).$$
\end{corollary}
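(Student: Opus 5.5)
The plan is to deduce the corollary from Theorems \ref{Thm:Theorem2} and \ref{Thm:Theorem3} with the test function $F\equiv 1$ near $\spt(||V||)$. To do this we must check that $V$ satisfies hypotheses 1--4 of Theorem \ref{Thm:Theorem1}, that $\spt(||\delta V||_{sing})=B$, that $||\partial V||^+(1)<\infty$, and that a modified normal field $G$ exists with $\varepsilon_G=1$.

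\textbf{Step 1: the hypotheses.} Near $B$, $V$ is a smooth manifold with boundary $B$ and multiplicity one. Away from $B$, $V$ is stationary, so $H(V,*)=0$ there.
\begin{itemize}
\item The first variation is then $\delta V(X)=-\int_B X\cdot\eta\,d\HH^{n-1}$, where $\eta$ is the outward conormal. So $||\delta V||=\HH^{n-1}\Lcorner B$ is Radon and purely singular, and $\spt(||\delta V||_{sing})=B$.
\item Compactness of $\spt(||V||)$ follows from the monotonicity formula, or the convex hull property: a stationary varifold outside the compact set $B$ with finite mass lies in the convex hull of $B$. If the statement intends $V$ to be compact, this is immediate.
\item $W=\spt(||V||)-B$ and $H=0$ on $W$, so hypothesis 3 holds with any $p$.
\item For weak twice-differentiability, I would invoke the corollary following Theorem \ref{Thm:Theorem1}: $H\in L^p_{loc}(||V||\Lcorner W)$ for $p>n$, so \cite[Theorem~19.11]{menne2024sharplowerboundmean} yields the required property.
\end{itemize}

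\textbf{Step 2: the boundary quantities.} Since $V$ is a $C^2$ multiplicity-one manifold with boundary near $B$, the tubular-neighbourhood/coarea computation gives
\[
\frac1r ||V||(\{0<d_B<r\})\to \HH^{n-1}(B).
\]
Indeed, near $B$ the function $d_B$ restricted to the manifold agrees with intrinsic distance to first order, with $|\grad^V d_B|\to1$. Hence $||\partial V||^+(1)<\infty$ and Theorem \ref{Thm:Theorem2} applies.

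For $G$, I take a $C^1$ extension of the intrinsic distance to $B$ in $V$, as suggested before the definition. Then $DG=\grad^V d_B$ on $B$ (the inward conormal), which has unit length, and $\grad^V d_B\cdot DG\to1$ as $r\downarrow0$. So $\varepsilon_G=1$.

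\textbf{Step 3: conclusion.} Theorem \ref{Thm:Theorem3} gives $||V||(B)=0$ and $\gamma^V\le ||\delta V||_{sing}=\HH^{n-1}\Lcorner B$.

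Now apply Theorem \ref{Thm:Theorem2}(3) with $F$ smooth, nonnegative, and equal to $1$ on a neighbourhood of the compact set $\spt(||V||)$. Then $\grad^V F=0$ $||V||$-a.e. and $FH=0$ $||V||$-a.e., so
\[
C_{n,m}||V||(\RR^{n+m})^{(n-1)/n}\le \gamma^V(B)\le \HH^{n-1}(B).
\]

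\textbf{Main obstacle.} The delicate points are the compactness of the support and verifying $\varepsilon_G>0$ from the essential-infimum definition. For the latter, I would first confirm that near $B$ the varifold really coincides with the smooth sheet, so that no other stationary pieces accumulate at $B$. This is exactly what the multiplicity-one-near-$B$ hypothesis is meant to supply.
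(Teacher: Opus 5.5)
Your proposal is correct and takes the same route as the paper. The paper deduces the corollary directly from Theorem~\ref{Thm:Theorem3}, with $G$ a $C^1$ extension of the intrinsic distance to $B$ (so $\varepsilon_G=1$ and $\gamma^V\le\HH^{n-1}\Lcorner B$), and then takes $F\equiv 1$ near $\spt(||V||)$ in Theorem~\ref{Thm:Theorem2}. One simplification: your Step 2 computation already gives $||\partial V||^+(1)=\HH^{n-1}(B)$, so Theorem~\ref{Thm:Theorem2}(2) alone yields $\gamma^V(B)\le\HH^{n-1}(B)$, and the modified normal field is not actually needed.
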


Finally, two appendices cover the necessary modifications of \cite[Theorems~14.1,14.25]{Villani_2009} to the case of $C^2$ submanifolds of Euclidean space, and to properties of graph coordinate systems on such submanifolds. To the author's knowledge these are both well-known to the experts, but we will have need of these specific formulations.

\textbf{Acknowledgments:} The author would like to thank his supervisor Prof. Yevgeny Liokumovich for suggesting this problem and many helpful discussions. The author was supported by the Ontario Graduate Scholarship. 

\section{Preliminaries}
\subsection{Notation}
We include here some potentially non-standard notation for later use.
\begin{itemize}
    \item $\mathcal{RM}(U)(\text{resp. }\mathcal{RM}(U,\RR^m))$: The space of Radon measures on $U$(resp. $\RR^m$-valued measures on $U$).
    \item $L^p(\mu,\hom(\RR^k,\RR^l)),\mu\in\mathcal{RM}(U)$: The space of functions $U\rightarrow \hom(\RR^k,\RR^l)$ with $L^p(\mu)$ coefficients. $L^p_{loc}(\mu,\hom(\RR^k,\RR^l))$ is defined in the obvious way.
    \item $\Diff{\leq m,p}{k,l}(\mu),\mu\in\mathcal{RM}(U)$: The space of $\RR$-linear combinations of differential operators of the form $A_\alpha D^\alpha$ where $\alpha$ is a multi-index with $|\alpha|\leq m$, and $A_\alpha\in L^p(\mu,\hom(\RR^k,\RR^l))$. $\Diff{\leq m,p,loc}{k,l}(\mu)$ is defined in the obvious way.
    \item $\mathcal{D}(U,\RR^m)$: The space of smooth functions $U\rightarrow\RR^m$ with support a compact subset of $U$.
    \item $\iv_k(U)$: The space of integral $k$-varifolds on $U$.
    \item $S:T$, $S,T\in\hom(\RR^k,\RR^k)$: The Frobenius inner product, $S:T=\sum_{i,j=1}^kS_{ij}T_{ij}=\tr(S^*T)$.
    \item $G(n,k)$: The space of k-dimensional linear subspaces in $\RR^n$. We identify an element of $G(n,k)$ with the corresponding orthogonal projection.
    \item $\Tan(V,x),T_xV,V\in\iv_k(U),x\in U$: The approximate tangent space of $V$ at $x$. We will use $T_xV$ for the subspace and $\Tan(V,x)$ for the corresponding orthogonal projection.
    \item $\tr_S,\det_S$: The trace and determinant of a bilinear form $B$ or linear map $A$ over a subspace $S$. In the latter case $\tr_S(A)=S:A$.
    \item $||V||,||\mu||$ with $V\in\iv_k(U),\mu\in\mathcal{RM}(U,\RR^m)$: The total variation measure induced by the varifold $V$ or $\RR^m$-valued measure $\mu$.
    \item $\theta^V$ with $V\in\iv_k(U)$: The multiplicity function of $V$.
    \item $D^Vg,V\in\iv_k(U),g:U\rightarrow \RR^N$: The tangential differential of $g$.
    \item $[DF(x)]^{-1}$ with $F$ a coordinate system for $M$: The inverse of $DF(x)$ as seen as a map to $T_{F(x)}M$.
\end{itemize}
\subsection{Measure Theory and Distributions}

\begin{definition}

    A Radon measure $\mu\in\mathcal{RM}(U)$ is called infinitesimally doubling at $x\in U$ provided that for any $c>1$ there exists $\gamma_c\geq1$ such that $$\limsup_{r\downarrow0}\frac{\mu(B_{cr}(x))}{\mu(B_r(x))}=\gamma_c.$$
\end{definition}

\begin{remark}
 Here, our notion of infinitesimally doubling is different from the usual requirement, where only $\limsup_{r\downarrow0}\frac{\mu(B_{2r}(x))}{\mu(B_r(x))}=\gamma$ is required to hold. Our definition will give better information on the constants $\gamma_c$, and both definitions are equivalent.
\end{remark}

\begin{lemma}
\label{lem:strdbl}
    Suppose that there exists a positive integer $N$ such that $0<\Theta^N(\mu,x)<\infty$. Then $\mu$ is infinitesimally doubling at $x$. Moreover, $\lim_{c\downarrow 1}\gamma_c=1$.
\end{lemma}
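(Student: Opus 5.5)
The plan is to use the density assumption directly. Write $\Theta=\Theta^N(\mu,x)\in(0,\infty)$ and $\omega_N=\vol(B^N_1(0))$. By definition of density,
$$\lim_{r\downarrow0}\frac{\mu(B_r(x))}{\omega_Nr^N}=\Theta .$$
Fix $c>1$. Then
$$\frac{\mu(B_{cr}(x))}{\mu(B_r(x))}=c^N\cdot\frac{\mu(B_{cr}(x))/(\omega_N(cr)^N)}{\mu(B_r(x))/(\omega_Nr^N)} .$$
As $r\downarrow0$ we also have $cr\downarrow0$, so the numerator of the second factor tends to $\Theta$, as does the denominator. Since $\Theta\neq0$, the quotient tends to $1$. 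Hence the ratio has a genuine limit, equal to $c^N$.

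One point needs care: we must have $\mu(B_r(x))>0$ for all small $r$, so that the ratio is defined. This follows because $\Theta>0$ forces $\mu(B_r(x))\geq\tfrac12\Theta\omega_Nr^N>0$ for $r$ small. So the $\limsup$ in the definition equals $\gamma_c=c^N\geq1$, which is a finite number, and $\mu$ is infinitesimally doubling at $x$.

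Finally, $\gamma_c=c^N\to1$ as $c\downarrow1$, which gives the second assertion. Finiteness of $\Theta$ is what ensures the numerator limit is finite, so the limit is a real number rather than $\infty$.

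There is no real obstacle. The only subtlety is positivity of small-ball measures, which is handled above. If the paper's density is defined with closed balls or a $\limsup$ convention, the same argument applies verbatim.
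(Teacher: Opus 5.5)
Your argument is correct and essentially matches the paper's: both normalize by $r^N$ and use the density. The paper bounds the $\limsup$ of the product by the product of the $\limsup$s to get $1\leq\gamma_c\leq c^N$ and then squeezes, while you show the ratio actually converges to $c^N$; you also justify $\mu(B_r(x))>0$ for small $r$, which the paper leaves implicit. One caveat: your closing remark is false for a $\limsup$ convention, since you (like the paper) need $\Theta^N(\mu,x)$ to exist as a genuine limit so that numerator and denominator tend to the same value, and with only an upper density in $(0,\infty)$ the lemma can fail (e.g.\ atoms of mass $t_k$ at the points $t_k\in\RR$ with $t_{k+1}=t_k^2$ give $\gamma_c=\infty$ at $0$).
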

\begin{proof}
    We have that \begin{align*}
        1\leq\limsup_{r\downarrow0}\frac{\mu(B_{cr}(x))}{\mu(B_r(x))}=\limsup_{r\downarrow0}\frac{\mu(B_{cr}(x))}{\mu(B_r(x))}\frac{r^N}{r^N}\leq\limsup_{r\downarrow0}\frac{r^N}{\mu(B_r(x))}\limsup_{r\downarrow0}\frac{\mu(B_{cr}(x))}{r^N}=c^N.
    \end{align*}
    The second statement follows immediately.
\end{proof}

We start with the following rather trivial facts, that are likely folklore, but will be useful later.
\begin{lemma}
\label{lem:ASMimpliesDEN}
    Let $U$ be an open subset of $\RR^n$ and let $\nu,\mu$ be Radon measures on $U$. Suppose $x\in U$ is such that $\Theta^n(\nu,x),\Theta^n(\mu,x)$ both exist, and furthermore for any $g\in C^{\infty}_c(\RR^n)$ with $\spt(g)\supseteq B_{r_1}(0),0\leq g$ for $r_1>0$, $$\lim_{r\rightarrow0}\frac{1}{r^n}[\nu(g((*-x)/r))-\mu(g((*-x)/r))]=0.$$
    
    Then $\Theta^n(\nu,x)=\Theta^n(\mu,x)$.
\end{lemma}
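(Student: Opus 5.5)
Both densities exist, so it is enough to compute $\Theta^n(\nu,x)$ and $\Theta^n(\mu,x)$ along one test function $g$ and show the results agree. The condition on $\spt(g)$ is awkward, since indicators of balls are not smooth. I will therefore sandwich indicators between admissible smooth functions. Write $\alpha_n=\vol(B_1(0))$, so that $\Theta^n(\nu,x)=\lim_{r\to 0}\nu(B_r(x))/(\alpha_n r^n)$, and similarly for $\mu$.

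\textbf{Step 1.} Fix $0<s<1$. Choose $g\in C^\infty_c(\RR^n)$ with $0\le g\le 1$, $g=1$ on $B_s(0)$ and $\spt(g)\subseteq \overline{B_1(0)}$. Its support contains $B_{s}(0)$, so $g$ is admissible with $r_1=s$. Also $1_{B_s(0)}\le g\le 1_{B_1(0)}$ pointwise, hence
$$\nu(B_{sr}(x))\le \nu(g((*-x)/r))\le \nu(B_r(x)),$$
and the same holds for $\mu$.

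\textbf{Step 2.} Dividing by $r^n$ and letting $r\to0$, existence of the densities gives
$$s^n\alpha_n\Theta^n(\nu,x)\le \liminf_{r\to 0}\frac{\nu(g((*-x)/r))}{r^n}\le\limsup_{r\to 0}\frac{\nu(g((*-x)/r))}{r^n}\le \alpha_n\Theta^n(\nu,x),$$
and likewise for $\mu$. By hypothesis,
$$\frac{\nu(g((*-x)/r))}{r^n}-\frac{\mu(g((*-x)/r))}{r^n}\to 0.$$
Hence the liminf and limsup of the $\nu$ quantity equal those of the $\mu$ quantity. This yields
$$s^n\Theta^n(\nu,x)\le\Theta^n(\mu,x)\qquad\text{and}\qquad s^n\Theta^n(\mu,x)\le\Theta^n(\nu,x).$$

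\textbf{Step 3.} Letting $s\uparrow1$ gives equality.

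\textbf{Caveats and main obstacle.} If the densities may equal $+\infty$, the inequalities above still make sense in $[0,\infty]$, and the hypothesis, read as a limit of a difference of finite quantities, still forces both densities to be infinite together. The only delicate point is the bookkeeping in Step 2 in that degenerate case.
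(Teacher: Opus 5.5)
Your proposal is correct and is essentially the paper's argument. Both sandwich the indicator of a ball between admissible smooth bumps of nearly equal radii, use the hypothesis to pass from $\nu$ to $\mu$, and let the radius ratio tend to $1$. You use $1_{B_s}\le g\le 1_{B_1}$ with $s\uparrow 1$, where the paper uses $1_{B_1}\le g\le 1_{B_l}$ with $l\downarrow 1$. Your liminf/limsup bookkeeping also covers infinite densities, which the paper's difference inequalities do not address.
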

\begin{proof}

Let $l>1$ and $g$ be a smooth function such that $0\leq g\leq1$, $g=1$ on $B_1(0)$, and $g=0$ outside of $B_l(0)$. Let $r_0>0$ be such that $B_{lr_0}(x)\subseteq U$.

We have that since the two measures are positive, the following two inequalities hold for $r<r_0$:
$$\nu(B_r(x))-\mu(g((*-x)/r))\leq \nu(B_r(x))-\mu(B_r(x))\leq\nu(g((*-x)/r)) -\mu(B_r(x)).$$

Multiplying by $\omega_n^{-1}r^{-n}$ and applying $\liminf$ to the left hand term, we find that 
\begin{align*}
    \omega_n^{-1}\liminf_{r\downarrow0}r^{-n}[\nu(B_r(x))-\mu(g((*-x)/r))]&=\omega_n^{-1}\liminf_{r\downarrow0}r^{-n}[\nu(B_r(x))-\nu(g((*-x)/r))]\\&+\omega_n^{-1}\liminf_{r\downarrow0}r^{-n}[\nu(g((*-x)/r))-\mu(g((*-x)/r))].
\end{align*}

The second term goes to $0$ by hypothesis, while the first term has $$\omega_n^{-1}\liminf_{r\downarrow0}r^{-n}[\nu(B_r(x))-\nu(g((*-x)/r))]\geq\omega_n^{-1}\liminf_{r\downarrow0}r^{-n}[\nu(B_r(x))-\nu(B_{lr}(x))]\rightarrow(1-l^n)\Theta^n(\nu,x).$$

Similarly, we have that $$\omega_n^{-1}\limsup_{r\downarrow0}r^{-n}[\nu(g((*-x)/r)) -\mu(B_r(x))]\leq(l^n-1)\Theta^n(\mu,x).$$

Therefore we have that for any $l>1$, $$(1-l^n)\Theta^n(\nu,x)\leq \Theta^n(\nu,x)-\Theta^n(\mu,x)\leq(l^n-1)\Theta^n(\mu,x).$$

By taking $l\downarrow1$ the conclusion follows.
\end{proof}

More generally, if $\mu_0$ is a Radon measure and everything is as above, except instead we require $\Theta^{\mu_0}(\mu,x),\Theta^{\mu_0}(\nu,x)$ to exist, and for $\mu_0$ to be infinitesimally doubling at $x$, with $\lim_{c\downarrow 1}\gamma_c=1$, then $\Theta^{\mu_0}(\mu,x)=\Theta^{\mu_0}(\nu,x)$. Here $\Theta^{\mu_0}(\mu,*)$ is the density of $\mu$ with respect to $\mu_0$.

\begin{proposition}
\label{prop:strASMimpliesDEN}
    If everything is as in the preceding paragraph, except that $\mu,\nu$ are signed Radon measures, and moreover the densities $\Theta^{\mu_0}(||\mu||,x),\Theta^{\mu_0}(||\nu||,x)$ exist and are finite, then the same statement holds.

    If both $\mu,\nu$ are $\RR^m$-valued Radon measures and moreover the densities $\Theta^{\mu_0}(||\mu||,x),\Theta^{\mu_0}(||\nu||,x)$ exist and are finite, then the same statement holds.
\end{proposition}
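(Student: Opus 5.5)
The plan is to run the same three-term decomposition as in the proof of Lemma \ref{lem:ASMimpliesDEN}. Since $\mu$ and $\nu$ are no longer positive, the monotonicity sandwich is not available. In its place I will estimate the error terms directly through the total variation measures $||\mu||,||\nu||$, normalised by $\mu_0(B_r(x))$ instead of $\omega_n r^n$. I read the hypothesis in the $\mu_0$-setting as $\lim_{r\downarrow0}\mu_0(B_r(x))^{-1}[\nu(g((*-x)/r))-\mu(g((*-x)/r))]=0$.

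Fix $l>1$ and a smooth $g$ with $0\leq g\leq 1$, $g=1$ on $B_1(0)$ and $\spt(g)\subseteq B_l(0)$. This $g$ is an admissible test function. Write $g_r=g((*-x)/r)$ and, for $r$ small, decompose
\begin{align*}
\nu(B_r(x))-\mu(B_r(x))&=[\nu(B_r(x))-\nu(g_r)]+[\nu(g_r)-\mu(g_r)]+[\mu(g_r)-\mu(B_r(x))].
\end{align*}
Dividing by $\mu_0(B_r(x))$, the middle term tends to $0$ by hypothesis. For the outer terms, $1_{B_r(x)}-g_r$ is supported in $B_{lr}(x)-B_r(x)$ and bounded by $1$ in absolute value, so
$$|\nu(B_r(x))-\nu(g_r)|\leq ||\nu||(B_{lr}(x))-||\nu||(B_r(x)).$$

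Next I divide this bound by $\mu_0(B_r(x))$ and write it as
$$\frac{||\nu||(B_{lr}(x))}{\mu_0(B_{lr}(x))}\cdot\frac{\mu_0(B_{lr}(x))}{\mu_0(B_r(x))}-\frac{||\nu||(B_r(x))}{\mu_0(B_r(x))}.$$
Both ratios $||\nu||(B_{s}(x))/\mu_0(B_{s}(x))$ converge to the finite density $\Theta^{\mu_0}(||\nu||,x)$ as $s\downarrow0$, and the middle factor has $\limsup$ at most $\gamma_l$. So the $\limsup$ is at most $(\gamma_l-1)\Theta^{\mu_0}(||\nu||,x)$, and likewise for $\mu$.

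Because $\Theta^{\mu_0}(\mu,x)$ and $\Theta^{\mu_0}(\nu,x)$ exist, the left side converges, and we get
$$|\Theta^{\mu_0}(\nu,x)-\Theta^{\mu_0}(\mu,x)|\leq(\gamma_l-1)\big(\Theta^{\mu_0}(||\nu||,x)+\Theta^{\mu_0}(||\mu||,x)\big).$$
Letting $l\downarrow1$ and using $\gamma_l\to1$ together with finiteness of the total-variation densities gives equality.

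For the $\RR^m$-valued case I will run the identical argument with absolute values replaced by Euclidean norms. The key estimate $\big|\int(1_{B_r}-g_r)\,d\nu\big|\leq ||\nu||(B_{lr}-B_r)$ still holds for vector measures, since $|1_{B_r}-g_r|\leq 1$ on the annulus.

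The only delicate point is the annulus estimate. It is where I need the full limit $\Theta^{\mu_0}(||\nu||,x)$, and not merely a $\limsup$, so that the two ratios cancel up to the factor $\gamma_l$. That existence is exactly the extra hypothesis imposed in the statement, so I expect no genuine obstacle beyond bookkeeping of the normalisation.
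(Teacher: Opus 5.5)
Your proof is correct, but it takes a different route from the paper's.

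\textbf{The paper's route.} It reduces to the positive case. It replaces $\nu,\mu$ by $\nu+||\nu||+||\mu||$ and $\mu+||\nu||+||\mu||$. These are positive, have the same difference $\nu(g_r)-\mu(g_r)$ (so the asymptotic hypothesis is unchanged), and have densities at $x$. It then applies the $\mu_0$-normalised positive version of Lemma \ref{lem:ASMimpliesDEN} and subtracts the finite density $\Theta^{\mu_0}(||\nu||+||\mu||,x)$. The $\RR^m$-valued case is handled componentwise through $f\mapsto\int fe_i\dv\mu$ and $f\mapsto\int fe_i\dv\nu$.

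\textbf{Your route.} You replace the one-sided monotonicity sandwich by a two-sided estimate. You bound each outer term by the total variation of the annulus, $||\nu||(B_{lr}(x)\setminus B_r(x))$ and $||\mu||(B_{lr}(x)\setminus B_r(x))$, and control these with the doubling constant $\gamma_l$.

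\textbf{What each buys.} The paper's approach reuses the positive-case argument with no new estimate. Yours is self-contained and yields the explicit bound
\[
|\Theta^{\mu_0}(\nu,x)-\Theta^{\mu_0}(\mu,x)|\leq(\gamma_l-1)\big(\Theta^{\mu_0}(||\nu||,x)+\Theta^{\mu_0}(||\mu||,x)\big).
\]
It also covers the signed and vector-valued cases in one stroke, with a small real advantage in the vector case. Your argument only uses densities of $||\mu||$ and $||\nu||$. A literal componentwise application of the signed case would instead need the densities of the component total variations $||\mu_i||$ and $||\nu_i||$ to exist at $x$, which the hypotheses do not directly give. The fix there is to shift each component by $||\mu||+||\nu||$, which dominates it.

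Your argument also avoids the Lebesgue-decomposition step in the paper's proof. That step is unnecessary for a pointwise statement, since existence of $\Theta^{\mu_0}(||\nu||+||\mu||,x)$ already follows from the assumed existence and finiteness of the two total-variation densities.
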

\begin{proof}
    By the Lebesgue decomposition theorem \cite[Theorem~1.31]{Evans_2015} there exists a set $A$ of $\mu_0$ measure 0 such that $\Theta^{\mu_0}(\mu,x),\Theta^{\mu_0}(\nu,x),\Theta^{\mu_0}(||\nu||+||\mu||,x),$ all exist at all $x\not\in A$. 
    
    Thus replacing $\nu\mapsto\nu+||\nu||+||\mu||$ and $\mu\mapsto\mu+||\nu||+||\mu||$ we see the conditions of lemma \ref{lem:ASMimpliesDEN} are met, hence \begin{align*}
        \Theta^{\mu_0}(\mu,x)+\Theta^{\mu_0}(||\nu||+||\mu||,x)=&\Theta^{\mu_0}(\mu+||\nu||+||\mu||,x)=\Theta^{\mu_0}(\nu+||\nu||+||\mu||,x)
        \\
        =&\Theta^{\mu_0}(\nu,x)+\Theta^{\mu_0}(||\nu||+||\mu||,x)
    \end{align*}
    as required.

    To get the second part, use the measures $f\mapsto \int fe_i\dv\mu$ and $f\mapsto \int fe_i\dv\nu$ 
\end{proof}

Given that for any $f\in C^\infty_c(\RR^n,\RR^m)$ the open set $\{x\in V:f(x)\neq 0\}$ is relatively compact in $V$, the next theorem follows immediately by \cite[Chapter~1,~Theorems~5.12~and~5.14]{simon2014introduction}:

\begin{lemma}
\label{lem:disttomeas}
    Let $T$ be an $\RR^m$-valued distribution on an open set $V$, and suppose that for each $U\Subset V$ open, we have a sequence $T_i$ of $\RR^m$-valued distributions converging to $T|_{\mathcal D(U,\RR^m)}$, and such that there exists a Radon measure $\mu$ and a non-negative function $g\in L^1_{loc}(\mu)$ depending only on $T$ with $|T_i(f)|\leq\int g|f|\dv\mu$ for all $i$, and for all fixed $f\in \mathcal D(U,\RR^m)$. 
    
    Then $T$ is an $\RR^m$-valued Radon measure on $V$, $||T||=F\dv\mu$ for some $F\in L^1_{loc}(\mu,\RR^m)$ and $|F|\leq f$ $\mu$-a.e. 

    If, instead, $m=1$ and $T_i(f)\geq-\int |f|g\dv\mu$ for any such sequence as above then $T=F\dv\mu+\nu$ for some $F\in L^1_{loc}(\mu)$, and $\nu$ is a positive Radon measure.
\end{lemma}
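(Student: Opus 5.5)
The plan is to reduce both parts of the statement to the Riesz representation theorem together with the Radon--Nikodym theorem, via the results of \cite[Chapter~1,~Theorems~5.12~and~5.14]{simon2014introduction}.

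\emph{Step 1: an estimate for $T$ itself.} Fix $U\Subset V$ open and $f\in\mathcal D(U,\RR^m)$. Since $T_i(f)\to T(f)$ and $|T_i(f)|\leq\int g|f|\dv\mu$ for every $i$, passing to the limit gives
$$|T(f)|\leq\int g|f|\dv\mu .$$
Every $f\in\mathcal D(V,\RR^m)$ has support compactly contained in some such $U$, so this inequality holds on all of $\mathcal D(V,\RR^m)$. The constant $\int g|f|\dv\mu$ is finite because $g\in L^1_{loc}(\mu)$.

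\emph{Step 2: extension to a measure.} The functional $f\mapsto\int g|f|\dv\mu$ is a seminorm. It is bounded on each $K$-supported piece by $\sup|f|\int_K g\dv\mu$, so Step 1 shows $T$ is continuous in the sup norm on functions supported in a fixed compact set $K$. Hence $T$ extends to $C_c(V,\RR^m)$, and Riesz representation (Simon, Theorem~5.14) yields an $\RR^m$-valued Radon measure with $T(f)=\int f\cdot\nu\dv||T||$. The estimate passes to continuous $f$ by density, and then to Borel sets by regularity, giving $||T||(A)\leq\int_A g\dv\mu$. So $||T||\ll\mu$, and Radon--Nikodym gives $||T||=F\dv\mu$ with $F\in L^1_{loc}(\mu)$. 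If we take $F$ to be the vector density $\nu\,d||T||/d\mu$, the total-variation bound gives $|F|\leq g$ $\mu$-a.e.; the ``$f$'' in the statement should read $g$.

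\emph{Step 3: the one-sided case $m=1$.} The limit now gives $T(f)\geq-\int|f|g\dv\mu$. Consider $S(f)=T(f)+\int fg\dv\mu$. For $f\geq0$ this gives $S(f)\geq0$, so $S$ is a positive distribution. By Simon, Theorem~5.12, $S$ is therefore a positive Radon measure $\nu$. Consequently $T=\nu-g\dv\mu$, which has the required form with $F=-g\in L^1_{loc}(\mu)$. If one additionally wants $\nu\perp\mu$, apply the Lebesgue decomposition to $\nu$ and absorb its absolutely continuous part into $F$; this keeps $F\in L^1_{loc}$ because $\nu$ is Radon.

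\emph{Main obstacle.} There is no real difficulty; the only care needed is the localization. The data are given separately on each $U\Subset V$, but $g$ and $\mu$ depend only on $T$. The bounds obtained on different $U$ are therefore the same global bound, and they patch consistently on $V$.
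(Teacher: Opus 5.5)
Your proposal is correct and follows the same route as the paper, which simply cites the Riesz representation theorem and the fact that positive distributions are Radon measures (Simon, Chapter~1, Theorems~5.12 and~5.14). You supply the details the paper omits (passing the bound to the limit, the localization over $U\Subset V$, and the shift $S(f)=T(f)+\int fg\dv\mu$ in the one-sided case), and you read the misprints as intended: $|F|\leq g$ rather than $|F|\leq f$, and $T=F\dv\mu$ with $\|T\|=|F|\dv\mu$.
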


\subsection{Optimal Transport}

Let $\mu,\nu$ be Radon probability measures on $\RR^n$ and let $c:\RR^n\times \RR^n\rightarrow\RR$ be continuous. Let $\pi_i:\RR^n\times \RR^n\rightarrow\RR^n,i=1,2$ be the natural projections.

The problem of optimal transportation in the Monge-Kantorovich formulation \cite[p.~10]{Villani_2009} is to solve $P(c,\nu,\mu)=\inf_{\gamma}\int c(x,y)\dv\gamma(x,y)$, where the minimum is taken over all probability measures $\gamma$ such that $\pi_{1,*}\gamma=\nu,\pi_{2,*}\gamma=\mu$.

One advantage of this approach is the following:

Under some mild assumptions of the cost function, this is equal to \cite[equation~5.3]{Villani_2009} $$D(c,\nu,\mu)=\sup\{\int \tilde f\dv\mu-\int\tilde g\dv\nu:\tilde f(x)-\tilde g(y)\leq c(x,y),(\tilde f,\tilde g)\in L^1(\mu\times\nu)\}.$$

In particular, for the distance squared cost function $c(x,y)=-x\cdot y$, and considering $X=\spt(\nu),Y=\spt(\mu)$, we have \begin{theorem}[{\cite[Theorem~5.10(iii)]{Villani_2009}}]
\label{thm:optexist}
    If $c(x,y)=-x\cdot y$ and the supports of $\mu,\nu$ are compact then:
    \begin{enumerate}
        \item $P(c,\nu,\mu),D(c,\nu,\mu)$ both exist and are finite.
        \item $P(c,\nu,\mu)=D(c,\nu,\mu)$
        \item $D(c,\nu,\mu)$ is achieved by a pair $(\tilde f,\tilde g)$ such that $\tilde f(x)=\inf_{y\in \spt(\nu)}(\tilde g(y)-x\cdot y)$. 
    \end{enumerate}
\end{theorem}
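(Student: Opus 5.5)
This is a cited result, \cite[Theorem~5.10(iii)]{Villani_2009}, specialized to the cost $c(x,y)=-x\cdot y$. I would verify it directly by Kantorovich duality on compact sets, using compactness at every stage. Set $X=\spt(\nu)$ and $Y=\spt(\mu)$, both compact. Then $c$ is continuous and bounded on $X\times Y$, so it is bounded on the supports, and every integral in sight is finite.

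\textbf{Primal problem.} The admissible plans form a nonempty set, since $\nu\otimes\mu$ is admissible. This set is tight, because every plan is supported in $X\times Y$, and it is weakly closed, because the marginal constraints pass to weak limits. By Prokhorov it is therefore weakly compact. The map $\gamma\mapsto\int c\dv\gamma$ is weakly continuous, since $c$ is bounded and continuous on $X\times Y$. Hence $P(c,\nu,\mu)$ is attained and finite.

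\textbf{Weak duality.} For any admissible pair $(\tilde f,\tilde g)$ and any plan $\gamma$, integrating the constraint $\tilde f(x)-\tilde g(y)\le c$ against $\gamma$ gives $\int\tilde f\dv\mu-\int\tilde g\dv\nu\le\int c\dv\gamma$ (reading the arguments with the marginal conventions fixed in the text). So $D\le P$, and in particular $D$ is finite.

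\textbf{Attainment of $D$ via $c$-conjugates.} Starting from any admissible pair, I would replace $\tilde f$ by
$$\tilde f^{c}(x)=\inf_{y\in Y}(\tilde g(y)-x\cdot y),$$
and then replace $\tilde g$ by the corresponding conjugate of $\tilde f^{c}$. Each replacement preserves admissibility and does not decrease the dual objective. After this double conjugation, the pair consists of infima of affine functions whose slopes lie in a compact set. Such functions are concave (or convex) and uniformly Lipschitz, with constant $\sup_{Y}|y|$, respectively $\sup_X|x|$. After normalizing by an additive constant, they are also uniformly bounded on the compact supports.

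Take a maximizing sequence and apply this regularization to each term. By Arzel\`a--Ascoli a subsequence converges uniformly, and the constraint passes to the limit. The limit pair therefore attains $D$, and after one final conjugation it has exactly the form in (3).

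\textbf{Strong duality $P=D$.} This is the main obstacle. I would follow the standard route from Villani's text. First show that the optimal plan has $c$-cyclically monotone support; this follows from optimality by a perturbation argument. Then apply Rockafellar's theorem: for $c=-x\cdot y$, cyclical monotonicity of the support is classical cyclical monotonicity in $(x,y)$, so the support lies in the graph of the subdifferential of a convex function. The Rockafellar construction produces an explicit $c$-concave potential $\tilde f$ whose associated pair satisfies $\tilde f(x)-\tilde g(y)=c(x,y)$ on the support of the optimal plan. Integrating this equality against that plan gives $D\ge P$, which closes the argument.

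Alternatively, strong duality follows from a Fenchel--Rockafellar minimax argument on $C(X)\times C(Y)$, which uses compactness in the same way.
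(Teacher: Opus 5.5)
Your argument is correct. There is no in-paper proof to compare it against step by step: the paper does not prove this statement and simply quotes \cite[Theorem~5.10(iii)]{Villani_2009}.

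Your sketch is essentially the standard proof of that theorem, specialized to compact supports:
\begin{enumerate}
\item Prokhorov's theorem gives existence for the primal problem.
\item Integrating the constraint gives weak duality.
\item Strong duality follows from $c$-cyclical monotonicity of optimal plans together with Rockafellar's theorem. For $c(x,y)=-x\cdot y$, $c$-cyclical monotonicity is exactly classical cyclical monotonicity, so Rockafellar's theorem applies verbatim.
\end{enumerate}

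Compactness does real work at one point, and you should state it explicitly: the integrability of the Rockafellar pair. The potential is a supremum of affine functions whose slopes lie in a bounded set, and it is finite at one point. Hence it is Lipschitz and bounded on the supports, and the pair lies in $L^1$. In Villani's general setting, this is the job done by the integrable bounds on $c$.

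Two smaller remarks.

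First, once you have the Rockafellar pair, your Arzel\`a--Ascoli step is redundant. That pair is admissible by Fenchel--Young and has dual value $\int c\dv\gamma=P\ge D$, so it is already a maximizer. One further conjugation then puts it in the form (3). The compactness argument is what you would need if you used the Fenchel--Rockafellar alternative instead, since that yields $P=D$ without producing potentials.

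Second, in (3) the infimum runs over $\spt(\nu)$, the support of the measure against which $\tilde g$ is integrated. With your labels $Y=\spt(\mu)$, which you inherited from the paper's own inconsistent notation, your conjugate takes the infimum over $\spt(\mu)$ instead. Swap the roles; this is harmless because $c$ is symmetric. The Lipschitz constant then becomes $\sup_{y\in\spt(\nu)}|y|$, which matches \eqref{eqn:Lipopt}.
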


We now associate a pair $(f,g)=(- \tilde f, -\tilde g)$, following \cite{Brendle_2023}. Then $f(x)=\sup_{y\in \spt(\nu)}(g(y)+x\cdot y)$ is the pointwise supremum of convex functions, hence is convex.

One more thing to note is that if $f$ is given as above, then for $x_1,x_2\in\spt(\mu)$, \begin{align*}
    |f(x_1)-f(x_2)|&=|\sup_{y_1\in \spt(\nu)}(g(y_1)+x_1\cdot y_1)-\sup_{y_2\in \spt(\nu)}(g(y_2)+x_2\cdot y_2)|
    \\
    &\leq|\sup_{y\in \spt(\nu)}(g(y)+x_1\cdot y)-(g(y)+x_2\cdot y)|
    \\
    &\leq|x_1-x_2|\sup_{y\in \spt(\nu)}|y|.
\end{align*}

Therefore, in the above theorem, $f$ is the restriction of a Lipschitz and convex function on $\RR^n$ with Lipschitz constant \begin{equation}
\label{eqn:Lipopt}
    \lip(f)\leq\sup_{y\in \spt(\nu)}|y|.
\end{equation}
See also the discussion in \cite[p.~3]{Brendle_2023}. We will always identify $f$ with this extension, so that by definition $f(x)-g(y)-x\cdot y\geq0$ for all $x\in\RR^n,y\in\spt(\nu)$.

For such an $\tilde f$, the $c$-superdifferential \cite[Definition~5.7]{Villani_2009} is well-defined. Hence, for our $f$ the $c$-subdifferential at a point $y\in\spt(\mu)$ is well-defined and is exactly $$\partial_cf(y)=\{x\in\spt(\nu):f(y)-g(x)-x\cdot y=0\}.$$

The full $c$-subdifferential is $\partial_cf=\{(x,y)\in\spt(\nu)\times\spt(\mu):f(y)-g(x)-x\cdot y=0\}$

Recall, to keep consistent with Villani, that $c(x,y)=-x\cdot y$, and note this is defined on the target domain, hence differs from the usual definition of the $c$-subdifferential \cite[Definition~5.2]{Villani_2009}.

\section{Geometry of $C^2$-Rectifiable Sets and Approximate Derivatives}
\label{section:C2Geo}

Recall from \cite{anzellotti1994k} that a set $A\subseteq\RR^{n}$ is called countably $(\HH^k,k)C^2$-rectifiable provided that it is $\HH^k$-measurable, $\HH^k(A)<\infty$, and there exists countably many $C^2$-submanifolds $M_i\subseteq\RR^n$, such that $\HH^k(A-\bigcup_{i=1}^\infty M_i)=0$. 

In the sequel a $C^2$ rectifiable $n$-dimensional set (resp. $n$-varifold $V$) will mean a countably $(\HH^n,n)C^2$-rectifiable set (resp. a rectifiable varifold with $\spt(||V||)$ countably $(\HH^n,n)C^2$-rectifiable). By a decomposition of $\spt(||V||),V\in\iv_k(\RR^N)$ we will mean the collection $\{M_i\}_{i=1}^\infty$.

We shall have need of several different types of Taylor expansions.

To do so, we will recall that $P$ is a polynomial of degree at most $k$ on a subspace $S\subseteq\RR^m$, $k\geq0$ provided there are multi-linear maps $P_i:\bigodot^iS\rightarrow\RR^n,i=0,...,k$. It is homogeneous (of degree $j$) provided that $P_i=0,i\neq j$.

We will often use the shorthand $P=\sum_{i=0}^kP_i$ even though this is not defined on the same space, and we will use $P_i(p)=P_i(p,...,p)$ for $p\in S$.

The first expansion is a pointwise Taylor expansion that occurs in the theory of convex functions.

\begin{definition}
    Let $f:A\rightarrow\RR^n$ be defined on an open set $A\subseteq\RR^n$, let $m>0$ be an integer. Then $f$ has a Taylor expansion of order $m$ at $x\in A$ along a subspace $S\subseteq\RR^n$ provided there are multi-linear maps $P_i:\bigodot^iS\rightarrow\RR^n,i=0,...,m$ such that $$||f(x+p)-\sum_{i=0}^mP_i(p)||=o(||p||^m),p\in S.$$

\end{definition}

    More generally, suppose that $S\subseteq\RR^n$ is a subspace, and $Q_i:\bigodot^iS\rightarrow\RR^n,i=0,...,N$ are multi-linear, with $rank(Q_1)=\dim(S)$ and $Q_0=x$, and set $Q(p)=\sum_{i=0}^NQ_i(p)$, then we can say $f$ has a Taylor expansion of order $m$ at $x\in A$ with respect to $Q$ if there are multi-linear maps as above with $||f(Q(p))-\sum_{i=0}^mP_i(p)||=o(||p||^m)$.

\begin{definition}
    Let $f:A\rightarrow\RR^n$ be defined on a closed set $A$, let $m>0$ be an integer. Then $f$ has an $(\HH^k,k)$approximate Taylor expansion of order $m$ at $x\in A$ provided there are multi-linear maps $P_i:\bigodot^iT_xA\rightarrow\RR^n,i=0,...,m$ such that set $$\{y\in A:\frac{||f(y)-\sum_{i=0}^mP_i(y-x)||}{||x-y||^m}\geq\varepsilon\}$$has $\HH^k\Lcorner A$-density 0 at $x$, for all $\varepsilon>0$.
\end{definition}

Let $M$ be a $C^k$ submanifold, $k\geq 2$, of Euclidean space, let $y\in M$, and let $\xi(x)$ be a local $C^k$ retraction to $M$ near $y$. Then $\xi(x)$ possesses a Taylor expansion $P_i$ of order $k$ at each $x\in M$ with respect to the subspace $T_xM$. If $x\in M$, $\xi(x)=P_0=x$, by \cite[Section~2.12.3,~Theorem~1]{Simon_1996}, we know that $P_1=\Tan(M,x)$ has rank $\dim(M)$, and $P_2=\sff_x$.

We may then define the order $k$ approximation of $M$ at $x$ by $Q_{x,M,k}=\sum_{i=0}^kP_i$, where the $P_i$ are as in the last paragraph.

It follows by the appendices that a locally convex function has a Taylor expansion of order 2 with respect to $Q_{x,M,2}$ at $\HH^{\dim(M)}$-a.e. $x\in M$.

Following the ideas in \cite[3.3,~3.11(4)]{Menne_2019} and \cite[2253]{menne2012sharp}, and taking inspiration from \cite[Theorem~19.11(1)]{menne2024sharplowerboundmean} we define:
\begin{definition}
    A set $A$ is weakly $k$ times differentiable at $x\in A$ provided that there is a subspace $S_x\in G(n,m),n\geq m\geq 0$ and a polynomial $P_x:S_x\rightarrow S_x^\perp$ of degree $\leq k$ such that if we set $B=\{v+P_x(v):v\in S\}$ then $x\in B$ and $$\lim_{r\downarrow0}r^{-k}\sup_{x\in B_r(x)\cap A}dist(x,B)=0.$$
\end{definition}

Given a $C^2$-rectifiable set $A$ of finite  $\HH^k$ measure, and a decomposition into $C^2$ submanifolds as above, it holds that by \cite[2.10.19(4)]{Federer_1996}, the function $x\mapsto\Tan(A,x)$ is approximately differentiable $\HH^k$-a.e. in $A$, and we define $\sff_{A,x}$ to be the approximate differential. Moreover, at $\HH^k$-a.e. $x\in A$ there is a quadratic "approximation" of the set using $Q_{x,A,2}\coloneqq Q_{x,M_i,2}=x+\Tan(A,x)+\sff_{A,x}$ on the subspace $T_xA$, where $\Theta^k(\HH^k\Lcorner (A-M_i),x)=\Theta^k(\HH^k\Lcorner (M_i-A),x)=0$.

We may do the same thing with a $C^2$ rectifiable varifold $V$, by taking $A=\spt(||V||)$.

We next quantify how good of an approximation this is:
\begin{definition}
    The quadratic height excess of a $C^2$ rectifiable varifold $V$ at scale $\rho>0$ and power $q\geq 1$ is defined as $$\QHE_\rho(V,x,q)\coloneqq \rho^{-2}\left(\rho^{-n}\int_{B_\rho(x)}dist^q(y,Q_{x,V,2}(T_xV))\dv||V||(y)\right)^{1/q}.$$
\end{definition}

Connecting this back to the notion of pointwise differentiability, we get \begin{definition} \label{def:twicediff}
    A $C^2$ rectifiable varifold is weakly twice differentiable provided that at $||V||$-a.e. $x\in\RR^{n+m}-\spt(||\delta V||_{sing})$, $\spt(||V||)$ is twice differentiable at $x$, and moreover we may take $S=T_xV$ and $P=x+\sff_{V,x}$.
\end{definition}

\begin{proposition}
\label{prop:WTDimpliesQHD}
    Let $V$ be weakly twice differentiable. Then at $||V||$-a.e. $x\in\RR^{n+m}-\spt(||\delta V||_{sing})$ it holds that $\lim_{\rho\rightarrow0}\QHE_\rho(V,x,q)=0$ for all $q\geq1$.
\end{proposition}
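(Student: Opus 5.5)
Fix a point $x\in\RR^{n+m}-\spt(||\delta V||_{sing})$ at which three things hold simultaneously: $\spt(||V||)$ is twice differentiable at $x$ with $S=T_xV$ and $P=x+\sff_{V,x}$, as in definition \ref{def:twicediff}; the density $\Theta^n(||V||,x)$ exists and is finite and positive; and the quadratic approximation $Q_{x,V,2}$ is defined. Each of these holds at $||V||$-a.e. such point. The first is the hypothesis itself. The second is standard for integral varifolds with locally bounded first variation; in fact upper semicontinuity of density, or simply rectifiability, gives a finite density a.e. The third comes from the $C^2$ decomposition discussed before the definition. Intersecting these full-measure sets, it suffices to show that $\QHE_\rho(V,x,q)\to0$ at every such $x$, for every $q\geq1$.

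At such an $x$, set $B=Q_{x,V,2}(T_xV)=\{x+v+\sff_{V,x}(v):v\in T_xV\}$, which is exactly the set appearing in the definition of weak twice differentiability. Define
$$\eta(\rho)\coloneqq\sup_{y\in B_\rho(x)\cap\spt(||V||)}dist(y,B).$$
By hypothesis $\eta(\rho)=o(\rho^2)$. Since $||V||$ is carried by $\spt(||V||)$, we can bound the integrand pointwise by $dist^q(y,B)\leq\eta(\rho)^q$ for $||V||$-a.e. $y\in B_\rho(x)$. This gives
$$\QHE_\rho(V,x,q)\leq\rho^{-2}\eta(\rho)\left(\rho^{-n}||V||(B_\rho(x))\right)^{1/q}.$$

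The density bound gives $\rho^{-n}||V||(B_\rho(x))\to\omega_n\Theta^n(||V||,x)<\infty$, so this factor stays bounded for small $\rho$. Meanwhile $\rho^{-2}\eta(\rho)\to0$, so the product tends to $0$ for every $q\geq1$.

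The only delicate point is matching the sets. The definition of weak twice differentiability involves the graph of the polynomial $P$, whereas $\QHE$ uses $Q_{x,V,2}$. These need to be the same set; if the sign or normalization convention for $\sff$ in $Q_{x,V,2}$ differed from the one in $P$, the argument would need them reconciled. Definition \ref{def:twicediff} explicitly takes $S=T_xV$ and $P=x+\sff_{V,x}$, so the two sets coincide by construction and this step should be immediate. The other point to check is that $\Theta^n(||V||,x)<\infty$ holds $||V||$-a.e.; this follows from $||V||=\theta^V\HH^n\Lcorner\spt(||V||)$ with $\theta^V$ locally integrable.
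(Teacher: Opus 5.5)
Your proposal is correct and matches the paper's proof: at a point where weak twice differentiability holds and $\Theta^n(||V||,x)$ is finite, you bound the integrand by the supremum of $dist^q(\cdot,Q_{x,V,2}(T_xV))$ over $B_\rho(x)\cap\spt(||V||)$. This gives $\QHE_\rho(V,x,q)\leq \rho^{-2}\eta(\rho)\,(\rho^{-n}||V||(B_\rho(x)))^{1/q}\to 0\cdot(\omega_n\Theta^n(||V||,x))^{1/q}=0$. The extra conditions you impose, namely positivity of the density and a separately verified quadratic approximation, are not needed but do no harm.
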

\begin{proof}
    We have that by definition if $N=Q_{x,V,2}(T_xV)$ then $\lim_{r\downarrow0}r^{-2}\sup_{x\in B_r(x)\cap A}dist(x,B)=0$ at $||V||$-a.e. $x\in\RR^{n+m}-\spt(||\delta V||_{sing})$. Additionally, we have that $\Theta^n(||V||,x)\in\RR$ at $||V||$-a.e. $x$.

    At a point $x$ satisfying these two conditions we have that \begin{align*}
        \lim_{\rho\rightarrow0}\QHE_\rho(V,x,q)&=\lim_{\rho\rightarrow0}\rho^{-2}\left(\rho^{-n}\int_{B_\rho(x)}dist^q(y,Q_{x,V,2}(T_xV))\dv||V||(y)\right)^{1/q}
        \\
        &\leq\lim_{\rho\rightarrow0}\rho^{-2}\left([\sup_{x\in B_r(x)\cap A}dist(x,B)]^q\rho^{-n}\int_{B_\rho(x)}\dv||V||(y)\right)^{1/q}
        \\
        &=\left[\lim_{\rho\rightarrow0}\rho^{-2}\sup_{x\in B_r(x)\cap A}dist(x,B)\right]\left(\lim_{\rho\rightarrow0}\rho^{-n}\int_{B_\rho(x)}\dv||V||(y)\right)^{1/q}
        \\
        &=(\omega_n\Theta^n(||V||,x))^{1/q}\left[\lim_{\rho\rightarrow0}\rho^{-2}\sup_{x\in B_r(x)\cap A}dist(x,B)\right]=0
    \end{align*}
\end{proof}

We will also need the $L^q$ height-excess:
$$\HE_\rho(V,x,q)\coloneqq \rho^{-1}\left(\rho^{-n}\int_{B_\rho(x)}dist^q(y,x+T_xV)\dv||V||(y)\right)^{1/q}.$$

\begin{definition}
    Let $S\in G(n,m)$, the flat divergence of $g\in C^1(\RR^n,\RR^n)$ with respect to $S$ is $div_S(g)(x)=\tr_S(Dg(x))$. Similarly, given $f\in C^2(\RR^n)$, we define the flat Laplacian with respect to $S$ as $\lap_Sf(x)=div_S(Df)(x)=\tr_S(D^2f(x))$.
\end{definition}

It is clear that if $Dg(x)$ is positive semi-definite, for example if $g=Df$ where $f$ is $C^2$ and convex, then $div_S(g)(x)\geq0$.

Finally, we recall that by \cite{Menne2013Second} (cf. also \cite{santilli2021second} for when the mean curvature is bounded and no singular part) that if $V$ is an integral varifold with locally bounded first variation, then $V$ is $C^2$ rectifiable, and there exists a decomposition $\{M_i\}$ as in the definition, such that for any $M_i$ and $||V||$-a.e. $x\in M_i$, $H(V,x)=H(M_i,x)$( cf. also \cite[Corollaries~4.2~and~4.3]{Sch_tzle_2009} for the case of $L^2_{loc}$ mean curvature and no singular part).

In such a case, by the results of the appendices, for any locally convex function $g$, the Alexandrov Hessian $\hess^{M_i}(g)$ exists at $x$, and for $||V||$-a.e. $x\in M_i\cap M_j$ one has $\hess^{M_j}(g)(x)=\hess^{M_i}(g)(x)$. Hence defining $\hess^{V}(g)(x)=\hess^{M_i}(g)(x),x\in M_i$ yields a bilinear form well-defined a.e.. 

Moreover, there is a second order Taylor expansion of $g$ with respect to $Q_{x,\spt(||V||),2}=Q_{x,M_i,2}$.

\section{Approximations and $L^p$ Taylor Expansions}

In this section we will let $\mu$ be a Radon measure in an open set $U\subseteq \RR^L$.

\begin{definition}
\label{Def:Tk}
    Let $x\in U$, in analogy with \cite{Calder_n_1961}, we say a function $f:U\rightarrow\RR^d$ is in $T^{k,p,d,\mu}(x)$ if there exist multi-linear maps $P_i:\bigodot^i\RR^L\rightarrow\RR^d,i=0,...,k-1$ such that $\limsup_{r\downarrow0}r^{-k}\left( \fint_{B_r(x)} |f-\sum_{i=0}^{k-1}P_i|^p \dv\mu\right)^{1/p}<\infty$. It is in $t^{k,p,d,\mu}(x)$ if it is in $T^{k,p,d,\mu}(x)$ and there exists a multi-linear map $P_k:\bigodot^k\RR^L\rightarrow\RR^d$ such that $\lim_{r\downarrow0}r^{-k}\left( \fint_{B_r(x)} |f-\sum_{i=0}^{k}P_i|^p \dv\mu\right)^{1/p}=0$.
\end{definition}

One reason this definition is useful is in the study of differential operators with $L^p_{loc}(\mu)$ coefficients. More precisely, if $L\in \Diff{\leq k,p}{d,l}(\mu)$, $q=\frac{p}{p-1}$ and $g\in L^q_{loc}(\mu,\RR^l)$, then we may make sense of $L_\mu^*g$ as the $\RR^d$-valued distribution $f\mapsto\int(Lf)\cdot g\dv\mu$.

\begin{lemma}
    Let $g,q,p,\mu,d,k,l$ be as above, with $L\in \Diff{\leq k,p}{d,l}(\mu)$, suppose that $g\in t^{k,q,l,\mu}(x)$ and let $F_x=\sum_{i=0}^{k}P_i$, with $P_i$ as in definition \ref{Def:Tk}. Suppose furthermore that $x$ is a Lebesgue point for the $p$-th power of the coefficients of $L$, and $\mu$ is infinitesimally doubling at $x$.
    
    Then for any $f\in C^\infty_c(U,\RR^d)$ with $\spt(f)\subseteq B_{R}(0)$, $$\lim_{r\rightarrow0}\frac{1}{
\mu(B_r(x))}[(L_\mu^*g)(f((*-x)/r))-(L_\mu^*F_x)(f((*-x)/r))]=0.$$
\end{lemma}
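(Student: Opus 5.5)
By linearity, the difference in the statement equals $\int L(f_r)\cdot(g-F_x)\dv\mu$, where $f_r=f((*-x)/r)$. The plan is to bound this integral by Hölder's inequality on the ball $B_{Rr}(x)$. The gain of $r^{k}$ from $g\in t^{k,q,l,\mu}(x)$ should cancel the loss of at most $r^{-k}$ from differentiating $f_r$.

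Write $L=\sum_{|\alpha|\leq k}A_\alpha D^\alpha$. Then $D^\alpha f_r=r^{-|\alpha|}(D^\alpha f)((*-x)/r)$, and $\spt(f_r)\subseteq B_{Rr}(x)$. For $r\leq 1$ this gives the pointwise bound
\[
|L f_r|\leq C_f\, r^{-k}\sum_{|\alpha|\leq k}|A_\alpha|\,1_{B_{Rr}(x)},
\]
where $C_f=\max_{|\alpha|\leq k}\sup|D^\alpha f|$.

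Hölder's inequality with exponents $p,q$ then gives
\[
\Big|\int L(f_r)\cdot(g-F_x)\dv\mu\Big|\leq C_f r^{-k}\sum_\alpha\Big(\int_{B_{Rr}(x)}|A_\alpha|^p\dv\mu\Big)^{1/p}\Big(\int_{B_{Rr}(x)}|g-F_x|^q\dv\mu\Big)^{1/q}.
\]
Normalizing each factor by $\mu(B_{Rr}(x))$, the right-hand side becomes
\[
C_f\,\mu(B_{Rr}(x))\sum_\alpha\Big(\fint_{B_{Rr}(x)}|A_\alpha|^p\dv\mu\Big)^{1/p}\cdot R^k\,(Rr)^{-k}\Big(\fint_{B_{Rr}(x)}|g-F_x|^q\dv\mu\Big)^{1/q}.
\]

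Now take each factor as $r\downarrow0$:
\begin{itemize}
\item Since $x$ is a Lebesgue point of $|A_\alpha|^p$, the averages $\fint_{B_{Rr}(x)}|A_\alpha|^p\dv\mu$ converge to $|A_\alpha(x)|^p<\infty$, so they stay bounded.
\item By definition of $t^{k,q,l,\mu}(x)$, with $\rho=Rr\to0$, the term $(Rr)^{-k}\big(\fint_{B_{Rr}(x)}|g-F_x|^q\dv\mu\big)^{1/q}$ tends to $0$.
\item After dividing by $\mu(B_r(x))$, the ratio $\mu(B_{Rr}(x))/\mu(B_r(x))$ has $\limsup\leq\gamma_R<\infty$ by infinitesimal doubling (take $\gamma_1=1$ if $R\leq1$).
\end{itemize}
The product therefore tends to $0$, which proves the claim.

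Some points need care:
\begin{itemize}
\item We must check that $L_\mu^*F_x$ makes sense. $F_x$ is a polynomial, hence locally bounded, hence in $L^q_{loc}(\mu)$, so this is fine.
\item The case $p=1$, $q=\infty$ needs the usual reading of the $q$-average as an essential supremum. The same argument then goes through.
\item We need $\mu(B_r(x))>0$ for small $r$, so that the normalized averages make sense. This holds when $x\in\spt\mu$, which is implicit in the doubling hypothesis.
\end{itemize}

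The main obstacle is the bookkeeping of scales. The Taylor remainder is controlled on $B_{Rr}(x)$, but the normalization in the statement is by $\mu(B_r(x))$, and these two balls must be reconciled. This is exactly where the infinitesimal doubling hypothesis at $x$ enters: it bounds $\mu(B_{Rr}(x))/\mu(B_r(x))$ uniformly as $r\downarrow0$. Everything else is Hölder's inequality and the chain rule for the rescaled test function.
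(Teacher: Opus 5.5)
Your proposal is correct and follows essentially the same route as the paper: rescaling the test function gives the factor $r^{-|\alpha|}\le r^{-k}$, Hölder is applied on $B_{Rr}(x)$, the $L^p$ factor is controlled by the Lebesgue-point hypothesis and the $L^q$ factor by $g\in t^{k,q,l,\mu}(x)$, and infinitesimal doubling reconciles $\mu(B_{Rr}(x))$ with $\mu(B_r(x))$. The only cosmetic differences are that the paper first reduces to a single term $A_\alpha D^\alpha$ by linearity and normalizes the two Hölder factors separately by $\mu(B_r(x))^{1/p}$ and $\mu(B_r(x))^{1/q}$, whereas you treat the whole sum at once and merge the two normalizations into a single ratio.
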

\begin{proof}
    By linearity, we may as well assume that $L=A_\alpha D^\alpha$ where $\alpha$ is a multi-index with $|\alpha|\leq m$, and $A_\alpha\in L^p(\mu,\hom(\RR^k,\RR^l))$.

    For each such $f$ pick $r_0>0$ small enough that $\spt(f((*-x)/r_0))\subseteq U$, and set $C_1=\sup_{y\in \RR^n}|D^\alpha f(y)|<\infty$.

    Then for $r<r_0$ we have $|D^\alpha f((*-x)/r)|\leq \frac{C_1}{r^{|\alpha|}}$.

    Therefore if we set $f_{x,r}=f((*-x)/r)$ then \begin{align*}
        |(L_\mu^*g)(f_{x,r})-(L_\mu^*F_x)(f_{x,r})|&=|\int(A_\alpha D^\alpha f_{x,r})\cdot (g-F_x)\dv\mu|\\
        &\leq \int|A_\alpha| |D^\alpha f_{x,r}||g-F_x|\dv\mu
        \\
        &\leq \frac{C_1}{r^{|\alpha|}}\int_{\spt(f_{x,r})}|A_\alpha||g-F_x|\dv\mu
        \\
        &\leq\frac{C_1}{r^{|\alpha|}}||1_{(\spt(f_{x,r}))}A_\alpha||_{L^p(\mu)}||1_{(\spt(f_{x,r}))}(g-F_x)||_{L^q(\mu)}
        \\
        &\leq\frac{C_1}{r^{|\alpha|}}||1_{B_{rR}(x)}A_\alpha||_{L^p(\mu)}||1_{B_{rR}(x)}(g-F_x)||_{L^q(\mu)}.
    \end{align*}

    On the other hand, by assumption $x$ is a Lebesgue point for $|A_\alpha|^p$ and $$\limsup_{r\downarrow0}r^{-k}\left( \fint_{B_r(x)} |g-F_x|^q \dv\mu\right)^{1/q}=\left(\limsup_{r\downarrow0}r^{-qk} \fint_{B_r(x)} |g-F_x|^q \dv\mu\right)^{1/q}=0.$$

    Therefore, if we take $r>0$ small enough so that $rR<1$, and WLOG take $R>1$, we have \begin{align*}
        0&\leq\limsup_{r\downarrow0}\frac{1}{[\mu(B_r(x))]^{1/q}}r^{-|\alpha|}||1_{B_{rR}(x)}(g-F_x)||_{L^q(\mu)}
        \\
        &\leq\left(\limsup_{r\downarrow0}r^{-qk} \frac{\mu(B_{rR}(x))}{\mu(B_r(x))}\fint_{B_{rR}(x)} |g-F_x|^q \dv\mu\right)^{1/q}
        \\
        &\leq R^k\left( \limsup_{r\downarrow0}\frac{\mu(B_{rR}(x))}{\mu(B_r(x))}\right)^{1/q}\left(\limsup_{r\downarrow0}(rR)^{-qk} \fint_{B_{rR}(x)} |g-F_x|^q \dv\mu\right)^{1/q}
        \\
        &=R^k\gamma_R^{1/q}\left(\limsup_{r\downarrow0}r^{-qk} \fint_{B_{r}(x)} |g-F_x|^q \dv\mu\right)^{1/q}=0.
    \end{align*}

    Also, through a similar computation we get that $$0\leq\limsup_{r\downarrow0}\frac{1}{[\mu(B_r(x))]^{1/p}}||1_{B_{rR}(x)}A_\alpha||_{L^p(\mu)}<\infty$$
    and hence the conclusion follows, since $1=1/p+1/q$.
\end{proof}

From this we get the following:
\begin{theorem}
\label{thm:byparts}
    Suppose $g,q,p,L,\mu,k,l,m$ are as above, $W\subseteq U$ is open, $g\in t^{k,p,l,\mu}(x)$ for $\mu$-a.e. $x\in W$, let $F_x$ be as in the prior lemma, and suppose that there exists a differential operator $\tilde L\in \Diff{\leq k,1,loc}{l,k}$ such that $L_\mu^*(h)(f)=\tilde L_\mu^*(f)(h)=\int (\tilde L h)\dv\mu$ whenever $f,h$ are smooth with compact support, with at least one of them having support contained in $W$, and that the conclusions of lemma \ref{lem:strdbl} hold at $x$
    
    Then, if $L_\mu^*g\Lcorner W$ is an $\RR^m$-valued Radon measure, the absolutely continuous part of $L_\mu^*g\Lcorner W$ is equal to $\tilde LF_x(x)$ at $\mu$-a.e. $x\in W$. In particular, $x\mapsto \tilde LF_x(x)$ is in $L^1_{loc}(\mu\Lcorner W,\RR^m)$ and the following integration-by-parts formula holds: \begin{equation}
    \label{eqn:byparts}
        \int g\cdot Lf\dv\mu=\int\tilde LF_x(x)\cdot f(x)\dv\mu(x)+[L_\mu^*g\Lcorner W]_{sing}(f),~\forall f\in C^\infty_c(W,\RR^l).
    \end{equation}
\end{theorem}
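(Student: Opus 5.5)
The plan is to identify the density of $L_\mu^*g\Lcorner W$ with respect to $\mu$ by blowing up at a good point $x$ and comparing it with the distribution $L_\mu^*F_x$, whose density is explicit.

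\textbf{Step 1 (choice of good points).} By the Lebesgue--Besicovitch differentiation theorem and the Radon--Nikodym decomposition, at $\mu$-a.e. $x\in W$ the following all hold simultaneously:
\begin{enumerate}
\item the density $\Theta^{\mu}(L_\mu^*g\Lcorner W,x)$ exists and equals the absolutely continuous part $h(x)$;
\item $\Theta^\mu(||[L_\mu^*g\Lcorner W]_{sing}||,x)=0$, so that $\Theta^\mu(||L_\mu^*g\Lcorner W||,x)$ exists and is finite;
\item $x$ is a Lebesgue point for the $p$-th powers of the coefficients of $L$ and for the coefficients of $\tilde L$;
\item $g\in t^{k,p,l,\mu}(x)$, and $\mu$ is infinitesimally doubling at $x$ with $\gamma_c\to1$ (this is assumed via lemma \ref{lem:strdbl}).
\end{enumerate}

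\textbf{Step 2 (blow-up comparison).} Fix such an $x$. By the preceding lemma, for every $f\in C_c^\infty$,
\[
\mu(B_r(x))^{-1}\bigl[(L_\mu^*g)(f_{x,r})-(L_\mu^*F_x)(f_{x,r})\bigr]\to0,
\qquad f_{x,r}=f((*-x)/r).
\]
Since $F_x$ is a polynomial, hence smooth, the adjoint hypothesis gives $(L_\mu^*F_x)(\phi)=\int (\tilde LF_x)\cdot\phi\dv\mu$ for $\phi$ supported in $W$. Thus $L_\mu^*F_x$ is the $\RR^m$-valued measure $(\tilde LF_x)\mu$. The function $y\mapsto\tilde LF_x(y)$ has $L^1_{loc}$ coefficients, the derivatives of $F_x$ being polynomials. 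At a Lebesgue point of those coefficients, its density with respect to $\mu$ at $x$ is $\tilde LF_x(x)$, and the density of its total variation is $|\tilde LF_x(x)|<\infty$.

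\textbf{Step 3 (identifying the density).} Now apply proposition \ref{prop:strASMimpliesDEN} with $\mu_0=\mu$, $\nu=L_\mu^*g\Lcorner W$ and the measure $(\tilde LF_x)\mu$. That proposition is the doubling-measure version of lemma \ref{lem:ASMimpliesDEN}; it uses $\mu(B_r(x))$ in place of $r^n$ and requires $\gamma_c\to1$. It yields $h(x)=\tilde LF_x(x)$. Consequently $x\mapsto\tilde LF_x(x)$ agrees $\mu$-a.e. with $h\in L^1_{loc}(\mu\Lcorner W)$. The formula (\ref{eqn:byparts}) then follows by writing $L_\mu^*g\Lcorner W=h\mu+[\,\cdot\,]_{sing}$ and testing against $f\in C_c^\infty(W)$.

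\textbf{Main obstacle.} The hard step is matching the hypotheses of proposition \ref{prop:strASMimpliesDEN}. First, that result is stated for nonnegative test functions with a fixed normalization, so the vector-valued test functions must be handled componentwise. Second, one must check that the lemma's convergence holds for all the cutoff functions $g$ used in the proof of lemma \ref{lem:ASMimpliesDEN}, with a scale $r_0$ small enough that the supports lie in $W$. Third, there is a mismatch between $t^{k,q}$ in the lemma and $t^{k,p}$ in the theorem. I would resolve it by reading the exponents as dual, assuming $g\in t^{k,q,l,\mu}(x)$ with $q=p/(p-1)$ as the lemma requires. Finally, the variable point $x$ in $F_x$ must not spoil measurability of $x\mapsto\tilde LF_x(x)$; this is handled because that map coincides a.e. with $h$.
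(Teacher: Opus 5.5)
Your proposal is correct and follows essentially the same route as the paper. The paper also works at $\mu$-a.e. good points where the densities of $L_\mu^*g\Lcorner W$, of its total variation, and of $(\tilde L G)\mu$ exist; it then applies the preceding blow-up lemma and concludes with Proposition \ref{prop:strASMimpliesDEN}. Your use of Lebesgue points of the coefficients of $\tilde L$ to handle the $x$-dependence of $F_x$, and your reading of the exponent as the dual $q$, are details the paper leaves implicit.
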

\begin{proof}
    Notice that by hypothesis for any smooth, compact supported $h$, the measure $L_\mu^*h\Lcorner W=\tilde Lh\dv\mu\Lcorner W$.

    Hence, by combining this with the prior lemma, the following properties are true for $\mu$-a.e. $x\in W$:
    \begin{enumerate}
        \item For any smooth function $G$ the $\mu$-density of $L_\mu^*G\Lcorner W$ exists at $x$ and is equal to $\tilde L G(x)$. Moreover, the density of the total variation exists at $x$ and is equal to $|\tilde L G(x)|$.
        \item The densities of $L_\mu^*g\Lcorner W$ and $||L_\mu^*g||\Lcorner W$ exist at $x$.
        \item For any smooth $f\in\mathcal D(W,\RR^m)$, $$\lim_{r\rightarrow0}\frac{1}{
\mu(B_r(x))}[(L_\mu^*g)(f((*-x)/r))-(L_\mu^*F_x)(f((*-x)/r))]=0.$$
    \end{enumerate}

    The statement then follows by proposition \ref{prop:strASMimpliesDEN}.
\end{proof}

In case $L,\tilde L$ are related as in the above theorem, we will often write $\tilde Lg(x)$ instead of $\tilde L F_x(x)$.

In preparation for our result on convex functions we prove a statement for Lipschitz vector fields:
\begin{lemma}
\label{lem:L2TaylorLip}
     Suppose $V\in \iv_n(U)$ is such that $T_xV$ exists, $\lim_{r\downarrow0}\HE_r(V,x,2)=0$, $g:U\rightarrow\RR^{D}$ is Lipschitz, and each component $g_i$ is tangentially differentiable with respect to $V$ at $x$. 
     
     Then $g\in t^{1,2,D,||V||}(x)$ and moreover, $F_x(y)=g(x)+D^Vg(x) (y-x)$ works.
     
\end{lemma}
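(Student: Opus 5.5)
We must show that
\[
r^{-1}\Big(\fint_{B_r(x)}|g(y)-g(x)-D^Vg(x)(y-x)|^2\,\mathrm{d}\|V\|(y)\Big)^{1/2}\to 0 .
\]
Membership in $T^{1,2,D,\|V\|}(x)$ with $P_0=g(x)$ is immediate from the Lipschitz bound $|g(y)-g(x)|\le \lip(g)|y-x|\le \lip(g)r$. We also note that $D^Vg(x)$ is defined only on $T_xV$; we extend it by composing with $\Tan(V,x)$, so that $F_x(y)=g(x)+D^Vg(x)\Tan(V,x)(y-x)$.

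The plan is to split $y-x=\Tan(V,x)(y-x)+\Tan(V,x)^\perp(y-x)$. We bound the error pointwise by
\[
|g(y)-g(x)-D^Vg(x)\Tan(V,x)(y-x)|.
\]
Tangential differentiability at $x$ is taken in the approximate sense with respect to $\|V\|$. This means that for every $\varepsilon>0$ the set
\[
E_\varepsilon=\{y:|g(y)-g(x)-D^Vg(x)\Tan(V,x)(y-x)|>\varepsilon|y-x|\}
\]
has $\|V\|$-density zero at $x$. If instead the hypothesis is that $g$ has a differential along $x+T_xV$, we first write $g(y)=g(\pi(y))+O(\lip(g)\,\mathrm{dist}(y,x+T_xV))$, where $\pi$ is the projection onto $x+T_xV$. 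The height-excess assumption $\HE_r(V,x,2)\to0$ then controls the normal correction in $L^2$:
\[
r^{-1}\Big(\fint_{B_r}\mathrm{dist}^2(y,x+T_xV)\,\mathrm{d}\|V\|\Big)^{1/2}\lesssim \HE_r(V,x,2)\,(r^n/\|V\|(B_r(x)))^{1/2}\to0,
\]
using $\Theta^n(\|V\|,x)>0$. Applying differentiability of $g$ at $x$ along the plane then makes the tangential part $o(r)$ uniformly on $B_r(x)$.

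For the approximate-differentiability route we split $\fint_{B_r}$ into the part over $E_\varepsilon$ and the part over its complement. The complement contributes at most $\varepsilon^2r^2$. On $E_\varepsilon$ the integrand is bounded by $(\lip(g)+|D^Vg(x)|)^2r^2$. That piece therefore contributes at most
\[
C r^2\,\|V\|(E_\varepsilon\cap B_r)/\|V\|(B_r)\to 0,
\]
again using positive lower density, which holds at $\|V\|$-a.e. $x$ for integral varifolds and which we add to the hypotheses or obtain from the a.e. setting. Dividing by $r^2$ and letting $\varepsilon\to0$ gives the limit $0$.

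The main obstacle is reconciling what "tangentially differentiable at $x$" means pointwise with the $L^2$ average. Specifically, we must show the normal displacement of the support costs only $o(r)$ in $L^2$. This is exactly where $\HE_r(V,x,2)\to0$ and the Lipschitz bound enter, together with the density lower bound that lets us compare $r^{-n}$ normalization to $\fint$.
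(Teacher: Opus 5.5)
Your proposal is correct. Your second branch, where $g$ is differentiable along the affine plane $x+T_xV$, is exactly the paper's argument. The paper splits $g(y)-F_x(y)$ through the point $x+\Tan(V,x)(y-x)$. It bounds the first piece by $\lip(g)\,\mathrm{dist}(y,x+T_xV)$ and controls it in $L^2$ using $\HE_r(V,x,2)\to0$. It bounds the second piece by $o(|y-x|)$ uniformly, using the expansion $g(x+v)=g(x)+D^Vg(x)v+o(|v|)$ for $v\in T_xV$; this is the reading of ``tangentially differentiable'' that the paper adopts.

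Your first branch is a genuinely different argument. Assuming approximate differentiability with respect to $\|V\|$, you split $B_r(x)$ into two parts. On the density-zero bad set $E_\varepsilon$, the Lipschitz bound gives an $O(r^2)$ integrand. On its complement, the integrand is at most $\varepsilon^2 r^2$. This route never uses the height-excess hypothesis. It pays for that by assuming a stronger, measure-theoretic notion of tangential differentiability. The paper's route needs only a pointwise expansion along the plane, and pays for that with $\HE_r\to0$.

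In both branches you correctly flag that $\Theta^n(\|V\|,x)>0$ is needed. It turns the $r^{-n}$ normalization of $\HE_r$ (or of the density-zero condition) into averages $\fint_{B_r(x)}$. The paper makes the same conversion without comment. It is justified, since for an integral varifold the approximate tangent space at $x$ comes with a positive integer multiplicity.
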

\begin{proof}
    We have the following two statements true at $x$:
    \begin{enumerate}
        \item $\limsup_{r\downarrow0}r^{-1}\left(\fint_{B^r(x)}dist^2(y,x+T_xV)\dv||V||(y)\right)^{1/2}=0$
        \item $g(x+v)=g(x)+D^Vg(x)v+o(|v|),v\in T_xV$
    \end{enumerate}
    Therefore, since $D^Vg(x)(y-x)=D^Vg(x)\Tan(V,x)(y-x)$, we have that \begin{align*}
        g(y)-(g(x)+D^Vg(x)(y-x))&=g(y)-g(x+\Tan(V,x)(y-x))
        \\
        &+g(x+\Tan(V,x)(y-x))-(g(x)+D^Vg(x)\Tan(V,x)(y-x)).
    \end{align*}

    Now, we have that $|g(y)-g(x+\Tan(V,x)(y-x))|^2\leq|dist(y,x+T_xV)|^2$ and $$|g(x+\Tan(V,x)(y-x))-(g(x)+D^Vg(x)\Tan(V,x)(y-x))|^2=o(|\Tan(V,x)(y-x)|^2).$$ Since $\Tan(V,x)$ has norm at most 1 as a linear map, we then also have $$|g(x+\Tan(V,x)(y-x))-(g(x)+D^Vg(x)\Tan(V,x)(y-x))|^2=o(|y-x|^2).$$

    Since $T_xV$ exists we have that the density exists at $x$ and is finite.

    Therefore, we conclude that \begin{align*}
        &\limsup_{r\downarrow0}r^{-2}\fint_{B^r(x)}|g(y)-(g(x)+D^Vg(x)(y-x))|^2\dv||V||(y)
        \\
        &\leq \limsup_{r\downarrow0}r^{-2}\fint_{B^r(x)}|dist(y,x+T_xV)|^2\dv||V||(y)+r^{-2}o(r^2)\fint_{B^r(x)}\dv||V||(y)
        \\
        &=\limsup_{r\downarrow0}(o(1)+o(1))=0
    \end{align*}

    Hence we have that \begin{align*}
        &\lim_{r\downarrow0}r^{-1}\left(\fint_{B^r(x)}|g(x+y)-(g(x)+D^Vg(x)(y-x))|^2\dv||V||(y)\right)^{1/2}
        \\
        &=\left(\lim_{r\downarrow0}r^{-2}\fint_{B^r(x)}|g(x+y)-(g(x)+D^Vg(x)(y-x))|^2\dv||V||(y)\right)^{1/2}=0
    \end{align*}
\end{proof}

\begin{theorem}
        Suppose $V\in \iv_n(U)$, $W\subseteq U$ is open, and at $||V||$-a.e. $x\in W\subseteq U$: $T_xV$ exists, $\lim_{r\downarrow0}\QHE_r(V,x,2)=0$, $g:U\rightarrow\RR$ is locally convex and $\hess^V(g)(x)$ exists. 
     
        Then $g\in t^{2,2,1,||V||}(x)$ at $||V||$-a.e. $x\in W$, and moreover, $F_x(y)=g(x)+\grad^Vg(x)\cdot (y-x)+\hess^V(g)(x)(y-x,y-x)$ works.
\end{theorem}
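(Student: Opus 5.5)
We argue as in the proof of lemma \ref{lem:L2TaylorLip}, with the linear approximation $x+T_xV$ replaced by the quadratic approximation $Q_{x,V,2}(T_xV)$. We also use the second order Taylor expansion of $g$ along $Q_{x,V,2}$ provided by the appendices (see the end of section \ref{section:C2Geo}).

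\textbf{Choice of good points.} Fix $x\in W$ at which all of the following hold:
\begin{itemize}
\item $T_xV$ exists and $0<\Theta^n(||V||,x)<\infty$;
\item $\QHE_r(V,x,2)\to0$;
\item $\hess^V(g)(x)$ exists, so that for $v\in T_xV$
$$g(Q_{x,V,2}(v))=g(x)+\grad^Vg(x)\cdot v+\hess^V(g)(x)(v,v)+o(|v|^2).$$
\end{itemize}
Let $L$ be a Lipschitz constant of $g$ on a neighbourhood of $x$; it exists because $g$ is locally convex. Every $y\in B_r(x)\cap\spt||V||$ with $r$ small has a point of $N=Q_{x,V,2}(T_xV)$ at minimal distance. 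If $\sff_{V,x}$ is viewed as a map $T_xV\times T_xV\to(T_xV)^\perp$, then near $x$ the set $N$ is a $C^\infty$ graph over $T_xV$. So for $r$ small the nearest point has the form $Q_{x,V,2}(v_y)$ with $v_y\in T_xV$, $|v_y|\leq C|y-x|$, and
$$|y-Q_{x,V,2}(v_y)|=dist(y,N)\leq 2r.$$
We fix this choice of $v_y$.

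\textbf{Splitting the error.} With $F_x$ as in the statement, write
$$g(y)-F_x(y)=\bigl[g(y)-g(Q_{x,V,2}(v_y))\bigr]+\bigl[g(Q_{x,V,2}(v_y))-F_x(Q_{x,V,2}(v_y))\bigr]+\bigl[F_x(Q_{x,V,2}(v_y))-F_x(y)\bigr].$$
The three brackets are controlled as follows.
\begin{itemize}
\item The first bracket is at most $L\,dist(y,N)$ in absolute value.
\item For the second bracket, the expansion above gives $g(Q_{x,V,2}(v_y))$ up to $o(|v_y|^2)$. Now $F_x(Q_{x,V,2}(v))=g(x)+\grad^Vg(x)\cdot v+\grad^Vg(x)\cdot\sff_{V,x}(v,v)+\hess^V(g)(x)(v,v)+O(|v|^3)$. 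Since $\grad^Vg(x)\in T_xV$ and $\sff_{V,x}(v,v)\perp T_xV$, the term $\grad^Vg(x)\cdot\sff_{V,x}(v,v)$ vanishes. Hence the second bracket is $o(|v_y|^2)=o(r^2)$ uniformly over $y\in B_r(x)$.
\item $F_x$ is a quadratic polynomial, hence Lipschitz on $B_{3r}(x)$ with constant bounded independently of $r\leq1$. So the third bracket is at most $C\,dist(y,N)$.
\end{itemize}

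\textbf{Conclusion.} Squaring, averaging over $B_r(x)$ with respect to $||V||$ and dividing by $r^4$ gives
$$r^{-4}\fint_{B_r(x)}|g-F_x|^2\dv||V||\leq C\,\QHE_r(V,x,2)^2\frac{\omega_n r^n}{||V||(B_r(x))}+o(1).$$
The right side tends to $0$ because the density at $x$ is positive and finite. This gives $g\in t^{2,2,1,||V||}(x)$ with the stated $F_x$. Note that $g\in T^{2,2,1,||V||}(x)$ is automatic once the limit is $0$ with the degree-$\leq 1$ part of $F_x$ as $P_0+P_1$.

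\textbf{Main obstacle.} The hard step is the second bracket: matching the Alexandrov expansion of $g$ along $Q_{x,V,2}$ with the ambient polynomial $F_x$ restricted to $N$. This relies on two things. First, the normal-curvature correction must cancel because $\grad^Vg(x)$ is tangent. Second, the $o(|v|^2)$ error must be uniform, which holds since the Taylor expansion from the appendices is a genuine pointwise $o$ statement and not an approximate one. If the appendices only give an approximate expansion, I would first use local convexity to upgrade it to a pointwise one, as in the classical Alexandrov argument: for convex functions, approximate second order differentiability implies the pointwise version.
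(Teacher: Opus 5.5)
Your proposal is correct and follows essentially the same route as the paper. In both, you project $y$ onto the quadratic approximation $Q_{x,V,2}(T_xV)$, split $g-F_x$ into three terms, bound the outer two by a Lipschitz constant times $dist(y,Q_{x,V,2}(T_xV))$ and the middle one by the pointwise second-order expansion, then average using $\QHE_r(V,x,2)\to0$ and finite positive density. The differences are cosmetic: you use a nearest-point projection where the paper uses a local Lipschitz $C^2$ retraction $P$, and you check explicitly the cancellation $\grad^Vg(x)\cdot\sff_{V,x}(v,v)=0$, which the paper leaves to its appendix.
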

\begin{proof}
    Notice that, if we define $Q$ as the smooth manifold generated by $v\mapsto x+v+\sff_x(v,v)$, then similarly to the prior lemma we have the following two properties at $x$:
    \begin{enumerate}
        \item $\limsup_{r\downarrow0}r^{-2}\left(\fint_{B^r(x)}dist^2(y,Q)\dv||V||(y)\right)^{1/2}=0$
        \item $g(y)=g(x)+\grad^Vg(x)\cdot (y-x)+\hess^V(g)(x)(y-x,y-x)+o(|x-y|^2),y\in Q$
    \end{enumerate}

    Let $P$ be a local, Lipschitz, and $C^2$ retraction to $Q$ near $x$ and $$u(y)=g(x)+\grad^Vg(x)\cdot (y-x)+\hess^V(g)(x)(y-x,y-x).$$ Notice also that locally convex functions are locally Lipschitz, so if we take $r$ sufficiently small and $y\in B_r(x)$ then we may compute that $$|g(y)-g(P(y))|^2\leq C|dist(y,Q)|^2$$ and $$|g(P(y))-u(P(y))|^2=o(|P(y)|^4).$$

    Now, we have that similarly to before, since we may take the retraction to be Lipschitz $$|g(P(y))-u(P(y))|^2=o(|y|^4).$$

    Finally, we have that similarly to the case with $g$, $$|u(P(y))-u(y)|\leq C|dist(y,Q)|^2.$$

    The conclusion then follows by similar logic to the last lemma.
\end{proof}

As a consequence of this, we have that for any $M_i$ is the decomposition, and for $||V||$-a.e. $x\in M_i$ the gradient and Hessian of a locally convex, L-Lipschitz function exists at $x$ and agree with the corresponding gradient and Hessian of $M_i$.

\section{Integration-by-parts Formulae}
\subsection{Integration-by-Parts for Smooth Functions}
In this subsection we will suppose $V\in\iv_n(U)$ has locally bounded first variation, with $U\subseteq\RR^{n+m}$ open.

By \cite[Section~4.3]{allard1972first} there exists an $\RR^{n+m}$-valued Radon measure $\sigma$ such that for any vector field $g\in\mathcal D(U,\RR^{n+m})$ the following integration-by-parts holds: $$\int div_{T_xV}(g(x))\intvar{x}=-\int g(x)\cdot H(V,x)\intvar{x}+\int g\cdot\dv\sigma.$$

In our notation $\sigma=(\delta V)_{sing}$, which we will now adopt.

If we let $g=fG$ for $f\in\mathcal D(U,\RR),G\in\mathcal D(U,\RR^{n+m})$, then this reads (cf. also \cite[Section~7.5]{allard1972first}): \begin{equation}
\label{eqn:divthm}
    \int \grad^Vf\cdot G+fdiv_{T_xV}G\intvar{x}=-\int fG\cdot H(V,x)\intvar{x}+\int fG\cdot\dv(\delta V)_{sing}.
\end{equation}

By putting $G=Dh,h\in\mathcal D(U,\RR)$ this becomes $$\int \grad^Vf\cdot \grad^Vh+f\lap_{T_xV}h\intvar{x}=-\int fDh(x)\cdot H(V,x)\intvar{x}+\int fDh\cdot\sigma$$ and subtracting by the corresponding formula for $g=hDf$ we get $$\int f\lap_{V}g-g\lap_{V}f\intvar{x}=\int (fDh-hDf)\cdot\dv(\delta V)_{sing}$$ where we have set $\lap_VF(x)=\lap_{T_xV}F(x)+DF(x)\cdot H(V,x),F\in C^\infty(U)$. We also set $div_V(F)(x)=div_{T_xV}F(x)+F(x)\cdot H(V,x),F\in C^\infty(U,\RR^{n+m})$

\begin{definition}
    The generalized boundary set of $V$ is defined as $\text{bnd} V=\spt(\sigma)(=\spt(||\delta V||_{sing}))$.
\end{definition}

Let $W=U-\text{bnd} V$ and suppose that the support of at least one of $f,h$ is contained in $W$. Then we have $\int f\lap_{V}h-h\lap_{V}f\intvar{x}=0$.

We also have, under the same hypothesis, $\int f\lap_{V}g\intvar{x}=-\int \grad^Vf\cdot\grad^Vh\intvar{x}$.

From here the next lemma is immediate.

\begin{lemma}
\label{lem:bypartsbounds}
    Let $f\in\mathcal D(W),F\in\mathcal D(W,\RR^{n+m})$. Then:
    \begin{enumerate}
        \item If $h:U\rightarrow\RR^{n+m}$ is smooth and L-Lipschitz then there is a dimensional constant $C>0$ such that $$|\int \grad^Vf\cdot h\intvar{x}|\leq \int(CL+|H(V,x)h(x)|)|f|\intvar{x}.$$
        \item If $h:U\rightarrow\RR$ is smooth and L-Lipschitz  then $|\int hdiv_V(F)\intvar{x}|\leq L\int|F|\intvar{x}$.
        \item If $h:U\rightarrow\RR$ is smooth, L-Lipschitz, and locally convex, then $\int \lap_{V}fh\intvar{x}\geq-\int |f|L|H(V,x)|\intvar{x}$.
    \end{enumerate}
\end{lemma}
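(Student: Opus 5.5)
All three items come from the identities just derived, using that $\spt f$ (resp. $\spt F$) lies in $W$, so the singular part $(\delta V)_{sing}$ never enters. Throughout, $L$-Lipschitz for a smooth $h$ means $|Dh|\le L$ pointwise.

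For (1), fix a component $h_i$ of $h$ and apply \eqref{eqn:divthm} with the vector field $G=h_ie_i$, or equivalently with $fh$ in place of the test field. Since $\spt f\subseteq W$, the singular term vanishes. Writing $\grad^V f\cdot h=\sum_i (\grad^V f)_i h_i$ and applying \eqref{eqn:divthm} to $f\,h$ gives
\begin{equation*}
\int \grad^Vf\cdot h\intvar{x}=-\int f\,div_{T_xV}h\intvar{x}-\int f\,h\cdot H(V,x)\intvar{x}.
\end{equation*}
Here $|div_{T_xV}h|=|\Tan(V,x):Dh|\le \sqrt{n}\,|Dh|\le C L$. Taking absolute values then gives the bound with the integrand $(CL+|H(V,x)\cdot h(x)|)|f|$, which is the stated one.

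For (2), apply the same identity with the roles swapped: use \eqref{eqn:divthm} with the scalar $h$ and the vector field $F$. Since $\spt F\subseteq W$, the singular term again vanishes, and
\begin{equation*}
\int h\,div_V(F)\intvar{x}=\int h\left(div_{T_xV}F+F\cdot H\right)\intvar{x}=-\int \grad^Vh\cdot F\intvar{x}.
\end{equation*}
Because $|\grad^Vh|\le|Dh|\le L$, the claim follows.

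For (3), use the symmetry $\int f\lap_V h=\int h\lap_V f$, valid since $\spt f\subseteq W$. Then expand
\begin{equation*}
\int f\lap_Vh\intvar{x}=\int f\,\lap_{T_xV}h\intvar{x}+\int f\,Dh\cdot H\intvar{x}.
\end{equation*}
Convexity of $h$ makes $D^2h$ positive semidefinite, so $\lap_{T_xV}h=\tr_{T_xV}D^2h\ge 0$. The second term is bounded below by $-L\int|f||H|$.

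This gives the stated inequality only when $f\ge0$, and the sign of $f$ is the real obstacle. The statement as written should be read with $f\ge 0$, which is the case needed for the later distributional argument via the second part of Lemma \ref{lem:disttomeas}. For signed $f$ no such bound holds in general, because the term $\int f\lap_{T_xV}h$ has no sign. I would therefore state and prove (3) for $0\le f\in\mathcal D(W)$, where $|f|=f$ and the argument above is complete.
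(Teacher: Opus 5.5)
Your proof is correct and is the argument the paper intends when it calls the lemma immediate: apply \eqref{eqn:divthm} and the resulting symmetric Green identity, with the singular term vanishing because $\spt f,\spt F\subseteq W$. Your restriction of (3) to $f\ge 0$ is also correct and necessary. For a multiplicity-one plane ($\delta V=0$, $H=0$) with $h(x)=\sqrt{1+|x|^2}$ and $0\ge f\not\equiv 0$, one gets $\int h\lap_V f\intvar{x}=\int f\lap_{T_xV}h\intvar{x}<0$, so the signed version fails. Moreover, only nonnegative test functions are needed where the lemma is applied in Theorem \ref{thm:alex}, via the second part of Lemma \ref{lem:disttomeas}.
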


\subsection{Distributional Divergences and Gradients}

\begin{proposition}
    Suppose $V$ has locally bounded first variation, $g:U\rightarrow\RR^{n+m}$ is L-Lipschitz, and we define $L$ as the differential operator $Lf(x)=-\grad^Vf(x)=-\Tan(V,x)Df(x)\in \Diff{\leq 1,\infty}{1,n+m}(||V||)$. Then $div_V(g)(f)=-\int\grad^Vf\cdot g\dv||V||$ is a signed Radon measure on $W=U-\text{bnd} V$. 
    
    Moreover, $div_V(g)$ is absolutely continuous with respect to $||V||$ and $\dv [div_V(g)](x)=(\tr_{T_xV}(D^Vg(x))+g(x)\cdot H(V,x))\dv||V||(x)$.
\end{proposition}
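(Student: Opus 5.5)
The plan is to show first that $div_V(g)$ is a Radon measure on $W$ absolutely continuous with respect to $||V||$ by mollification and Lemma \ref{lem:disttomeas}, and then to identify its density via Theorem \ref{thm:byparts}.

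\textbf{Step 1: the distribution is a measure.} Fix $U'\Subset W$ open and mollify $g$ to get $g_\varepsilon=g*\phi_\varepsilon$. These are smooth and $L$-Lipschitz on a neighbourhood of $\overline{U'}$, and $g_\varepsilon\to g$ uniformly there. For $f\in\mathcal D(U')$, define $T_\varepsilon(f)=-\int\grad^Vf\cdot g_\varepsilon\dv||V||$. Since $\spt f\subseteq W$, the formula (\ref{eqn:divthm}) has no singular term and gives
$$T_\varepsilon(f)=\int f\,\bigl(div_{T_xV}g_\varepsilon+g_\varepsilon\cdot H(V,x)\bigr)\dv||V||.$$
This yields the bound $|T_\varepsilon(f)|\le\int |f|\,(CL+|g_\varepsilon||H|)\dv||V||\le \int|f|\,G\dv||V||$, with $G=CL+(\sup_{U'}|g|+1)|H(V,*)|$. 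This is Lemma \ref{lem:bypartsbounds}(1). The function $G$ lies in $L^1_{loc}(||V||\Lcorner W)$, because local boundedness of the first variation gives $H\in L^1_{loc}(||V||)$. Also $T_\varepsilon(f)\to div_V(g)(f)$ by uniform convergence, since $\grad^Vf$ is bounded with compact support. Lemma \ref{lem:disttomeas} then shows that $div_V(g)$ is a signed Radon measure on $W$ of the form $F\dv||V||$ with $|F|\le G$. In particular it is absolutely continuous with respect to $||V||$.

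\textbf{Step 2: identifying the density.} Apply Theorem \ref{thm:byparts} with $\mu=||V||$, $k=1$, $p=\infty$ (so that $q=1$), and $L f=-\grad^Vf$, whose coefficients $-\Tan(V,*)$ are bounded. For the adjoint take $\tilde L h=div_{T_xV}h+h\cdot H(V,x)$ acting on vector fields $h$. This is a first-order operator with $L^1_{loc}$ coefficients, and the identity $\int Lf\cdot h=\int f\,\tilde L h$ for smooth $f,h$, one of them supported in $W$, is exactly (\ref{eqn:divthm}) with vanishing singular term.

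The hypotheses of Theorem \ref{thm:byparts} then need checking at $||V||$-a.e. $x\in W$:
\begin{enumerate}
\item The approximate tangent plane exists with $\Theta^n(||V||,x)\in(0,\infty)$, so Lemma \ref{lem:strdbl} gives doubling with $\gamma_c\to1$.
\item $x$ is a Lebesgue point of $\Tan(V,*)$ and of $H$.
\item $g\in t^{1,q,n+m,||V||}(x)$ with $F_x(y)=g(x)+D^Vg(x)(y-x)$. This follows from Lemma \ref{lem:L2TaylorLip}, using that $L^2$ implies $L^1$ on averages. Its inputs are the tangential differentiability of $g$ a.e., which is Rademacher's theorem on the $C^2$ pieces $M_i$ of the decomposition, and $\HE_r(V,x,2)\to0$, which holds a.e. for integral varifolds with locally bounded first variation (Allard's/Brakke's height-excess decay, or alternatively the $C^2$-rectifiability).
\end{enumerate}
Theorem \ref{thm:byparts} then identifies the absolutely continuous part of $div_V(g)$ as $\tilde LF_x(x)=\tr_{T_xV}(D^Vg(x))+g(x)\cdot H(V,x)$. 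Since Step 1 shows there is no singular part, this gives the full measure.

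\textbf{Main obstacle.} The subtle point is the $p=\infty$, $q=1$ endpoint in the lemma preceding Theorem \ref{thm:byparts}. It needs $\mu(B_r)^{-1}\|1_{B_{rR}}A\|_{L^\infty}$ to be controlled, which fails as written. I would instead rework that estimate directly: bound by $\sup|A|\cdot r^{-1}\int_{B_{rR}}|g-F_x|\dv||V||$ and use the $L^1$ Taylor remainder together with doubling. The other delicate point is verifying a.e. height-excess decay. If this is not available from earlier sections, I would derive it from $C^2$-rectifiability together with density-zero of $\spt||V||\setminus M_i$, noting that Lipschitz control of $g$ only requires the $L^2$ distance to the plane.
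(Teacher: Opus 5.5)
Your proposal is correct and follows the paper's proof essentially step for step: mollify $g$, bound the approximating distributions via Lemma \ref{lem:bypartsbounds}(1), apply Lemma \ref{lem:disttomeas} to get a signed measure on $W$ that is absolutely continuous with respect to $||V||$, and identify its density through Lemma \ref{lem:L2TaylorLip}, equation (\ref{eqn:divthm}) and Theorem \ref{thm:byparts} with $\tilde L h=div_{T_xV}h+h\cdot H(V,x)$. Neither of the obstacles you flag is a real one. In the lemma preceding Theorem \ref{thm:byparts} the H\"older split is $\mu(B_r(x))^{-1/p}\|1_{B_{rR}(x)}A_\alpha\|_{L^p(\mu)}\cdot\mu(B_r(x))^{-1/q}r^{-1}\|1_{B_{rR}(x)}(g-F_x)\|_{L^q(\mu)}$, so for $p=\infty$ the first factor is simply bounded by $\sup|A_\alpha|$, which is exactly your reworked estimate. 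Also, $\HE_r(V,x,2)\to0$ holds at every point where the approximate tangent plane exists: test the blow-up convergence against a cutoff times $\min\{dist^2(*,T_xV),1\}$.
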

\begin{proof}
    Let $f\in\mathcal D(W,\RR)$ be fixed. Then for $\varepsilon>0$ take $g_\varepsilon=\rho_\varepsilon*g$ be a mollification of $g$. Since $W\subseteq U$, for $\varepsilon$ small enough $g_\varepsilon$ is well-defined on $\spt(f)$. Moreover, it is also L-Lipschitz and converges uniformly on $\spt(f)$ to $g$.

    Thus if we take a subsequence $\varepsilon_i\downarrow0$ the distributions $$T_i(f)=-\int\grad^Vf\cdot g_{\varepsilon_i}\dv||V||$$ converge to $div_V(g)(f)$.

    By lemma \ref{lem:bypartsbounds}, and by the uniform convergence, there exists $K>0$ such that $$|T_i(f)|\leq\int(CL+K|H(V,x)|+|H(V,x)g(x)|)|f|\intvar{x}.$$

    Thus by lemma \ref{lem:disttomeas} $div_V(g)\Lcorner W=\lim_iT_i$ is a signed measure, which is absolutely continuous with respect to $||V||$.

    On the other hand, by lemma \ref{lem:L2TaylorLip} and equation \ref{eqn:divthm}, in addition to theorem \ref{thm:byparts}, we deduce that the absolutely continuous part of $div_V(g)\Lcorner W$ is equal to $div_V(g(x)+D^Vg(x)(*-x))(x)=\tr_{T_xV}(D^Vg(x))+g(x)\cdot H(V,x)$ at a.e. $x\in V$.

    In particular, by equation \ref{eqn:byparts}, we deduce that for any $f\in\mathcal D(W,\RR)$ the integration-by-parts $$-\int\grad^Vf\cdot g\dv||V||=-\int f[\tr_{T_xV}(D^Vg(x))+g(x)\cdot H(V,x)]\intvar{x}$$ holds.
\end{proof}

Through a similar proof we deduce that \begin{proposition}
    Suppose $V$ has locally bounded first variation, $g:U\rightarrow\RR$ is L-Lipschitz, and we define $L$ as the differential operator $Lf(x)=div_V(f(x))\in \Diff{\leq 1,\infty}{n+m,1}(||V||)$. Then $-\grad^{V,dist}g(f)=\int gdiv_V(f)\dv||V||$ is an $\RR^{n+m}$-valued Radon measure on $W=U-\text{bnd} V$. 
    
    Moreover, $-\grad^{V,dist}(g)$ is absolutely continuous with respect to $||V||$ and $\dv [-\grad^{V,dist}(g)](x)=-\grad^Vg(x)\dv||V||(x)$.
\end{proposition}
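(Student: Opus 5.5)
The plan mirrors the proof of the preceding proposition, with the roles of the operator and the function exchanged. Fix $f\in\mathcal D(W,\RR^{n+m})$ and mollify, $g_\varepsilon=\rho_\varepsilon*g$. For $\varepsilon$ small, $g_\varepsilon$ is smooth and L-Lipschitz on a neighbourhood of $\spt(f)$, and $g_\varepsilon\to g$ uniformly there. For a sequence $\varepsilon_i\downarrow0$ we set
$$T_i(f)=\int g_{\varepsilon_i}div_V(f)\intvar{x}.$$
Since $div_V(f)=div_{T_xV}f+f\cdot H(V,*)$ is bounded by $C|Df|+|f||H(V,*)|$ and $H\in L^1_{loc}(||V||)$, uniform convergence gives $T_i(f)\to-\grad^{V,dist}g(f)$.

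Next I get the uniform bound needed for lemma \ref{lem:disttomeas}. Lemma \ref{lem:bypartsbounds}(2), applied to the smooth L-Lipschitz $g_{\varepsilon_i}$, gives $|T_i(f)|\leq L\int|f|\intvar{x}$. Because $f$ is supported in $W$, the singular term in equation \ref{eqn:divthm} vanishes, so this is just $T_i(f)=-\int\grad^Vg_{\varepsilon_i}\cdot f\intvar{x}$ with $|\grad^Vg_{\varepsilon_i}|\leq L$. Lemma \ref{lem:disttomeas} with $\mu=||V||$ and the constant weight $L$ then shows that $-\grad^{V,dist}g\Lcorner W$ is an $\RR^{n+m}$-valued Radon measure. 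It is absolutely continuous with respect to $||V||$, with density bounded by $L$ almost everywhere, so there is no singular part on $W$.

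To identify the density I use theorem \ref{thm:byparts} with $Lf=div_V(f)\in\Diff{\leq 1,2}{n+m,1}(||V||)$. Its coefficients are $\Tan(V,*)$, which is bounded, and $H(V,*)$, which is in $L^2_{loc}$; so I take $p=q=2$ rather than $\infty$, and this is the point that needs checking. The formal adjoint is $\tilde Lh=-\grad^Vh$ for smooth $h$. This is exactly equation \ref{eqn:divthm} with one of $h,f$ supported in $W$, since then $\int h\,div_V(f)\intvar{x}=-\int\grad^Vh\cdot f\intvar{x}$.

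The pointwise hypotheses of theorem \ref{thm:byparts} hold at $||V||$-a.e. $x\in W$:
\begin{enumerate}
\item By lemma \ref{lem:L2TaylorLip}, $g\in t^{1,2,1,||V||}(x)$ with $F_x(y)=g(x)+D^Vg(x)(y-x)$. This needs $\lim_{r\downarrow0}\HE_r(V,x,2)=0$ a.e., which follows from $C^2$-rectifiability, or from proposition \ref{prop:WTDimpliesQHD} since the height excess is dominated by $\rho\,\QHE_\rho$; it also needs tangential differentiability of the Lipschitz $g$ a.e., which is Rademacher on rectifiable sets.
\item Almost every $x$ is a Lebesgue point of $|H|^2$ and of $\Tan(V,*)$.
\item $||V||$ is infinitesimally doubling at $x$ with $\gamma_c\to1$ by lemma \ref{lem:strdbl}, since $0<\Theta^n(||V||,x)<\infty$ a.e.
\end{enumerate}
Theorem \ref{thm:byparts} then gives the density $\tilde LF_x(x)=-\grad^VF_x(x)=-\Tan(V,x)D^Vg(x)=-\grad^Vg(x)$.

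The main obstacle is the exponent bookkeeping in theorem \ref{thm:byparts} and its preceding lemma. The coefficient $H$ is only $L^2_{loc}$, so the Taylor expansion of $g$ is needed in the dual exponent $L^2$; lemma \ref{lem:L2TaylorLip} supplies exactly that. If the theorem's formulation requires $p=\infty$ coefficients, I would instead split $div_V=div_{T_xV}+H\cdot$. The bounded part is handled with $p=\infty$, $q=1$. The $H$ part is treated directly: $\int gf\cdot H\intvar{x}$ is already a measure with density $gH$, which matches $F_x(x)H(V,x)$ at $x$.
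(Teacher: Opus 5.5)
Your proposal is correct and follows the paper's argument, which is the template of the preceding divergence proposition run again: mollify $g$, get $|T_i(f)|\leq L\int|f|\intvar{x}$ from lemma \ref{lem:bypartsbounds}(2), conclude via lemma \ref{lem:disttomeas}, and identify the density through lemma \ref{lem:L2TaylorLip} and theorem \ref{thm:byparts}. One caution: the proposition assumes only locally bounded first variation, so $H(V,*)$ is merely in $L^1_{loc}(||V||)$ and your $p=q=2$ route is not available in general. Your splitting argument should therefore be the main line rather than a fallback. Equivalently, note that $|g-F_x|\leq 2L|y-x|=O(r)$ on $B_{rR}(x)$ and take $x$ to be a Lebesgue point of $H$. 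This also repairs the paper's inaccurate assertion that $div_V\in\Diff{\leq 1,\infty}{n+m,1}(||V||)$.
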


Using (3) of lemma \ref{lem:bypartsbounds} we deduce Alexandrov's theorem for twice differentiable varifolds:
\begin{theorem}[Alexandrov's Theorem for Varifolds]
\label{thm:alex}
    Suppose $V$ has locally bounded first variation, is weakly twice differentiable, $H(V,x)\in L^2_{loc}(||V||,\RR^{n+m})$, $g:U\rightarrow\RR$ is L-Lipschitz and locally convex, and we define $L$ as the differential operator $$Lf(x)=div_V(Df(x))=\lap_Vf(x)\in\Diff{\leq 2,2}{1,1}(||V||).$$ Then $\lap_Vg(f)=\int g\lap_Vf\dv||V||$ is a signed Radon measure on $W=U-\text{bnd} V$. 
    
    Moreover, the singular part of $\lap_Vg$ is non-negative and $\Theta^{||V||}(\lap_Vg,x)=\tr_{T_xV}(\hess^V(g)(x))$ at a.e. $x\in W$.

    In particular, for any $f\in\mathcal{D}(W,\RR),f\geq0$ $$\int f\tr_{T_xV}(\hess^V(g)(x))\intvar{x}\leq-\int \grad^Vf\cdot\grad^Vg\intvar{x}$$
\end{theorem}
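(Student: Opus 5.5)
The argument follows the template of the two preceding propositions: first show that the distribution $\lap_Vg$ is a Radon measure on $W$ whose singular part is nonnegative, then identify its density with Theorem \ref{thm:byparts}.

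\textbf{Step 1: $\lap_V g$ is a Radon measure.} Fix $U'\Subset W$ open and mollify, setting $g_\varepsilon=\rho_\varepsilon*g$. For small $\varepsilon$ these are smooth on a neighbourhood of $\overline{U'}$, convex there, $L$-Lipschitz, and converge to $g$ locally uniformly. For $f\in\mathcal D(U',\RR)$ the distributions
\[
T_i(f)=\int g_{\varepsilon_i}\lap_Vf\dv||V||
\]
converge to $\lap_Vg(f)$. This uses that $\lap_Vf=\lap_{T_xV}f+Df\cdot H(V,*)$ lies in $L^2_{loc}\subseteq L^1_{loc}(||V||)$ and that the convergence is uniform. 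By Lemma \ref{lem:bypartsbounds}(3), applied with $f\ge 0$ and $h=g_{\varepsilon_i}$, we get $T_i(f)\ge -\int |f|L|H(V,x)|\intvar{x}$. Since $|H|\in L^1_{loc}(||V||)$, the second part of Lemma \ref{lem:disttomeas} (with $m=1$, $\mu=||V||$, $g=L|H|$) gives the decomposition $\lap_Vg=F\dv||V||+\nu$ on $U'$, with $F\in L^1_{loc}$ and $\nu\ge0$. Exhausting $W$ shows that $\lap_Vg\Lcorner W$ is a signed Radon measure. Its singular part with respect to $||V||$ is the singular part of $\nu$, hence nonnegative.

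\textbf{Step 2: identifying the density.} Here I would apply Theorem \ref{thm:byparts} with $\mu=||V||$, $p=q=2$, $k=2$, and $\tilde L=L=\lap_V$. The symmetry hypothesis $\int f\lap_Vh=\int h\lap_Vf$ when one support lies in $W$ was derived just before Lemma \ref{lem:bypartsbounds}. The coefficients of $L$ are $\Tan(V,*)$ (bounded) and $H(V,*)$, which lies in $L^2_{loc}$, so $||V||$-a.e. point is an $L^2$ Lebesgue point for them. Infinitesimal doubling with $\gamma_c\to1$ holds a.e. by Lemma \ref{lem:strdbl}, because $\Theta^n(||V||,x)\in(0,\infty)$ a.e.

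The Taylor hypothesis $g\in t^{2,2,1,||V||}(x)$ a.e. in $W$ follows from the theorem preceding Section 5. Its inputs are available: by Proposition \ref{prop:WTDimpliesQHD}, weak twice differentiability gives $\QHE_r(V,x,2)\to0$ a.e. on $W$, and the appendix discussion gives existence of $\hess^V(g)$ a.e.

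The theorem then says the absolutely continuous density is $\lap_VF_x(x)$ with $F_x(y)=g(x)+\grad^Vg(x)\cdot(y-x)+\hess^V(g)(x)(y-x,y-x)$. Computing directly, $\lap_VF_x(x)=\tr_{T_xV}(D^2F_x)+DF_x(x)\cdot H(V,x)$. I expect this to yield $\tr_{T_xV}(\hess^V g(x))$ plus a first-order term $\grad^Vg(x)\cdot H(V,x)$, which is the tangential gradient paired with the normal vector $H$ and so vanishes. The main obstacle is this bookkeeping. One has to check the normalization of the quadratic term, since a factor $\tfrac12$ may be needed, and one has to confirm that the intended $\hess^V$ is the Hessian along the quadratic model $Q_{x,V,2}$. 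That is, the second fundamental form contribution should already be absorbed, so that the trace against the Euclidean expansion really gives $\tr_{T_xV}\hess^V(g)$. If it is not absorbed, I would correct using $\sff_{V,x}$ together with the a.e. identity $H(V,x)=H(M_i,x)$.

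\textbf{Step 3: the inequality.} For $f\ge0$ in $\mathcal D(W)$, equation (\ref{eqn:byparts}) gives
\[
\int g\lap_Vf\dv||V||=\int f\,\tr_{T_xV}\hess^V(g)\dv||V||+[\lap_Vg]_{sing}(f)\ge\int f\,\tr_{T_xV}\hess^V(g)\dv||V||.
\]
The left-hand side equals $-\int\grad^Vf\cdot\grad^Vg\dv||V||$: apply the preceding gradient proposition to the Lipschitz function $g$ with test field $Df$, which gives $\int g\,div_V(Df)=-\int \grad^Vg\cdot Df$, and note that $\grad^Vg\cdot Df=\grad^Vg\cdot\grad^Vf$.
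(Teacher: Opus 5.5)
Your proposal is correct and follows the paper's proof essentially step for step. The measure property comes from mollification together with Lemma \ref{lem:bypartsbounds}(3) and the one-sided part of Lemma \ref{lem:disttomeas}; the density comes from Theorem \ref{thm:byparts} applied to the $t^{2,2,1,||V||}$ expansion; and the inequality comes from the distributional-gradient proposition with test field $Df$.

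The bookkeeping you flag is settled as in the paper. Because $\hess^V$ is defined through the expansion along $Q_{x,V,2}$ (Appendix B), the second fundamental form is already absorbed. The first-order term vanishes because $\grad^Vg(x)\in T_xV$ while $H(V,x)\in\nor(V,x)$ a.e.; this is Brakke's perpendicularity theorem, which the paper cites and you should too. Your factor-$\tfrac12$ worry points to a real normalization slip in the paper's $F_x$, which is harmless once $\hess^V$ is read as the true intrinsic Hessian.
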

\begin{proof}
    Let $A\Subset W$, so that for any symmetric mollifier there exists some $\varepsilon_0>0$ such that $g_\varepsilon=\rho_\varepsilon*g$ is well-defined as a function on $A$.

    Then by uniform convergence for any $f\in\mathcal D(W)$ $\lap_Vg_\varepsilon(f)$ is well-defined and $\lap_Vg_\varepsilon\rightarrow \lap_Vg$ in the sense of distributions on $W$ as $\varepsilon\rightarrow 0$.

    Since $g_\varepsilon$ is smooth, locally convex, and also L-Lipschitz, and $f$ is compactly supported away from $\text{bnd} V$, by lemma \ref{lem:bypartsbounds} we have $$\lap_Vg_\varepsilon(f)\geq-\int |f|L|H(V,x)|\intvar{x}.$$

    By taking a sequence $\varepsilon_i\rightarrow0$ we use the latter part of lemma \ref{lem:disttomeas} to determine that $\lap_Vg$ is a signed Radon measure and $[\lap_Vg]_{sing}$ is a (positive) Radon measure.
    
    By \ref{lem:strdbl} and \ref{prop:WTDimpliesQHD} we therefore deduce that at $||V||$-a.e. $x\in W$:
    \begin{enumerate}
        \item The conclusions of lemma \ref{lem:strdbl} hold at $x$.
        \item $T_xV$ exists.
        \item $H(V,x)\in\nor(V,x)$ by \cite[Theorem~5.8]{Brakke_2015}.
        \item $\lim_{\rho\downarrow 0}\QHE_\rho(V,x,2)=0$.
        \item $\hess^V(g)(x)$ exists.
        \item $g\in t^{2,2,1,||V||}(x)$.
        \item $DF_x(x)=\grad^Vg(x)\in\Tan(V,x)$.
    \end{enumerate}

    Therefore, by theorem \ref{thm:byparts} we have that $\lap_Vg=\lap_VF_x(x)\dv||V||(x)+[\lap_Vg]_{sing}$.

    On the other hand, since $DF_x(x)\in\Tan(V,x)$, it holds that $$\lap_VF_x(x)=\tr_{T_xV}(\hess^V(g)(x))+H(V,x)\cdot DF_x(x)=\tr_{T_xV}(\hess^V(g)(x)).$$

    Thus the two statements follow.

    By the previous lemma we deduce that by setting $F=Df$, that $$\lap_Vg(f)=-\grad^{V,dist}g(F)=-\int\grad^Vg(x)\cdot F\intvar{x}=-\int\grad^Vg(x)\cdot \grad^Vf(x)\intvar{x}$$
    while at the same time $$\lap_Vg(f)=[\lap_Vg]_{sing}(f)+\int f\tr_{T_xV}(\hess^V(g)(x))\intvar{x}\geq\int f\tr_{T_xV}(\hess^V(g)(x))\intvar{x}.$$

    This is exactly the third statement.
\end{proof}

\section{Estimating the Intrinsic Laplacian}

This section will be largely similar to the paper of Brendle and Eichmair, see \cite{Brendle_2023}. We include the statements to make this self-contained, but we will primarily refer to the original paper and illustrate the necessary modifications. The modifications are necessary due to this setting not possessing enough regularity to allow the exponential map and parallel transport to have nice properties, and obtaining a slightly more general result about the density of a general Radon measure $\mu$.

We set $M\subseteq \RR^{n+m}$ to be a connected, $C^2$ submanifold, $\mu$ a Radon probability measure with bounded support, and $\nu,\rho,\alpha$ as in \cite[Theorem~1]{Brendle_2023}.

\begin{remark}
    We are forced to use the underlying $C^2$ submanifolds, rather than the smooth quadratic approximations due to the density estimate in proposition \ref{prop:densest}.
\end{remark}

Then by theorem \ref{thm:optexist} there exist functions $f,g:\RR^{n+m}\rightarrow\RR$ with  $$D(c,\nu,\mu)=\int g\dv\nu-\int f\dv\mu.$$

Moreover, by equation \ref{eqn:Lipopt} we may assume that $f$ is convex and $\lip(f)\leq 1$. We also recall the $c$-subdifferential $\partial_cf(x),\partial_cf$.

The proof of \cite[Lemma~3]{Brendle_2023} carries over in the following form:
\begin{lemma}
    Let $E\subseteq\RR^{n+m},G\subseteq\bar{B}^{n+m}$ be compact and such $[(\bar{B}^{n+m}\backslash G)\times E]\cap\partial_cf=\emptyset$. Then $\mu(E)\leq\nu(G)$.
\end{lemma}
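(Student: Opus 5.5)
The plan is to use the Kantorovich duality structure of Theorem \ref{thm:optexist}, as in \cite[Lemma~3]{Brendle_2023}. Let $\gamma$ be an optimal plan for $P(c,\nu,\mu)$, so $\pi_{1,*}\gamma=\nu$ and $\pi_{2,*}\gamma=\mu$. By strong duality $P=D$, attained by $(f,g)$, we have
\[
\int \big(f(y)-g(x)-x\cdot y\big)\dv\gamma(x,y)=\int f\dv\mu-\int g\dv\nu-\int x\cdot y\dv\gamma=0 .
\]
Here I use the signs of the convention $(f,g)=(-\tilde f,-\tilde g)$. By construction the integrand is nonnegative on $\RR^{n+m}\times\spt(\nu)$. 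Hence it vanishes $\gamma$-a.e., and by continuity of $f$ (Lipschitz) and lower semicontinuity/continuity of $g$ on the compact support, it vanishes on $\spt(\gamma)$. This gives $\spt(\gamma)\subseteq\partial_cf$, using that $\spt(\gamma)\subseteq\spt(\nu)\times\spt(\mu)$.

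Next I estimate $\mu(E)$ through the plan:
\[
\mu(E)=\gamma(\RR^{n+m}\times E)=\gamma\big(\spt(\gamma)\cap(\RR^{n+m}\times E)\big).
\]
By the hypothesis $[(\bar B^{n+m}\setminus G)\times E]\cap\partial_cf=\emptyset$, every point $(x,y)\in\spt(\gamma)$ with $y\in E$ has $x\in G$. Here I use that $\spt(\nu)\subseteq\bar B^{n+m}$, which holds for $\nu$ as in \cite[Theorem~1]{Brendle_2023}. Therefore
\[
\mu(E)\leq\gamma(G\times\RR^{n+m})=\nu(G).
\]

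The main obstacle is confirming that the pairing convention here matches the one in the paper. The $c$-subdifferential is written with the first coordinate in $\spt(\nu)$ and the second in $\spt(\mu)$, and I must check that the optimal plan is oriented the same way, with marginals $\nu$ then $\mu$. I also need regularity of $g$ so that the zero set of $f(y)-g(x)-x\cdot y$ is closed and contains $\spt(\gamma)$. For this I would replace $g$ by its $c$-transform $g(x)=\inf_y(f(y)-x\cdot y)$. That function is concave and continuous, it keeps the inequality, and it does not decrease the dual value, so the pair remains optimal. Closedness of the contact set then follows directly.
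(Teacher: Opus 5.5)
Your proof is correct and is essentially the paper's argument. The paper only observes that $\mu(E)=\mu(E\cap\spt(\mu))$ and then defers to \cite[Lemma~3]{Brendle_2023}, i.e.\ to the fact that an optimal plan with marginals $\nu,\mu$ is concentrated on $\partial_cf$, which is what you show; your version also handles $E\not\subseteq\spt(\mu)$ automatically, since $\gamma$ lives on $\spt(\nu)\times\spt(\mu)$.

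The closedness detour is unnecessary: $\gamma$-a.e.\ vanishing of $f(y)-g(x)-x\cdot y$ already gives $\gamma\big((\bar B^{n+m}\setminus G)\times E\big)=0$. Replacing $g$ by $\inf_{y}(f(y)-x\cdot y)\geq g$ can only enlarge the contact set, so it is harmless only because the paper's $g$ is implicitly already this $c$-transform. Also, for the zero set to be closed you would need upper, not lower, semicontinuity of $g$.
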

\begin{proof}
   We have $E\cap\spt(\mu)$ is a compact subset of $\spt(\mu)$, and $\mu(E)=\mu(E\cap\spt(\mu))$. The rest is exactly the same as the proof of \cite[Lemma~3]{Brendle_2023}.
\end{proof}

To make the conclusions necessary for the definition of $\hat \omega$, in particular remark \ref{rmrk:GradTaylor}, more clear, we use a notion of subdifferential of $M$ being induced by coordinate charts, rather than ambient space.

Let $r=r_x>0$ be such that the Riemannian disk $D_x=\bar B_{r_x}(x)\subseteq M$ is a compact submanifold of $\RR^{n+m}$ with boundary, and is contained in the image of a single set of graph coordinates (see appendix \ref{AppendixB}); then fix $K_x>0$ such that $$\sup_{z\in D_x,y\in T^\perp_xM,|y|\leq1}|(x-z)\cdot y|\leq K_xd_M^2(x,z).$$

To see this is possible, use graph coordinates (cf. appendix \ref{AppendixB}) to see that, setting $\tilde x=F^{-1}(x)$, $z=F(\tilde x)+DF(0)v+O(|v|^2)$. Thus, since all distances are equivalent and $F$ is bilipschitz onto its image near $x$, $x-z=w+O(|x-z|^2),w=DF(0)v$, so $(x-z)\cdot y=O(|x-z|^2)$.

One departure we will make from \cite{Brendle_2023} is the definition of the subdifferential. 

\begin{definition}[{\cite[Definition~8,~Proposition~9]{Azagra_2007}}]
    The subdifferential of $f$ at $x$ is defined as $$\partial_M f(x)=\partial(f\circ F)(0)(DF(0))^{-1}=[D(F^{-1})(0)]^*\partial(f\circ F)(0)$$ where $F$ form graph coordinates at $x$, and $\partial(f\circ F)$ is the usual proximal subdifferential (see \cite[Definition~7.25]{Clarke_2013}).
\end{definition}

Notice that if $y\in \partial_M f(x)$ then $|y|\leq 1$ by a slight modification of \cite[Proposition~11]{Azagra_2007}.

\begin{remark}
    As is common in this paper, we cannot use the exponential map for the subdifferential since the exponential map is only $C^1$, while $C^2$ is required for a satisfactory definition.
\end{remark}

\begin{remark}
    $\partial_M f(x)$ really lives inside of $T^*_xM$, which is to be expected for differential properties, but we identify it with the subset of $T_xM$ given by lowering the index (turning it into a column vector).
\end{remark}

\begin{remark}
\label{rmrk:GradTaylor}
    Since $f\circ F$ is semiconvex with a quadratic modulus of semiconvexity near $0$, this is actually equal to the standard subdifferential for convex functions, see \cite[Section~4.1]{Clarke_2013}. In particular, by combining this with the results in the appendices and \cite[Theorem~14.25]{Villani_2009}, we deduce that, if $\hess^M(f)(x)$ exists, then for $y$ close enough to $x$, with $u$ given in appendix \ref{AppendixB}, $$\partial_M f(y)=\grad^Mu(y)+o(d_M(y,x)).$$
\end{remark}

We also notice that since the graph coordinates locally look like a rotation of $\RR^n\times\{0\}^m$ plus a small perturbation, if $v=DF(0)w,y=F(a)$ then $$v\cdot(y-x)=DF(0)w\cdot DF(0)a+o(d_M^2(x,y))=w\cdot a+o(d_M^2(x,y)).$$ Hence, the following modification of \cite[Corollary~10]{Azagra_2007} holds:

\begin{lemma}
   If $v\in T_xM$, then $v\in\partial_M f(x)$ if and only if there exists $\sigma\geq 0$ and $U\ni x$ such that for all $y\in U\cap M$ $$f(y)-f(x)\geq v\cdot(y-x)-\sigma d_M^2(y,x).$$
\end{lemma}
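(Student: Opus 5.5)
The plan is to transfer the statement to the flat model through graph coordinates and then apply the Euclidean characterization of the proximal subdifferential. Fix graph coordinates $F$ at $x$ with $F(0)=x$, as in appendix \ref{AppendixB}. By the definition, $v\in\partial_M f(x)$ exactly when $w:=DF(0)^*v$ (equivalently $v=DF(0)w$, after identifying $T_xM$ with $\RR^n$ via $DF(0)$) lies in the proximal subdifferential $\partial(f\circ F)(0)$. The Euclidean characterization of proximal subgradients \cite[Definition~7.25]{Clarke_2013}, which is also the content of \cite[Corollary~10]{Azagra_2007} in the flat case, says this holds iff there are $\sigma_0\geq0$ and a neighbourhood $U_0\ni 0$ such that
$$f(F(a))-f(x)\geq w\cdot a-\sigma_0|a|^2\quad\text{for all }a\in U_0.$$
So the work is to show that this inequality in the coordinate $a$ is equivalent to the stated inequality in $y=F(a)$.

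Two comparisons do this.
\begin{enumerate}
\item Since $F$ is bi-Lipschitz near $0$ and $d_M$ is comparable to the Euclidean distance locally, there is $c\geq1$ with $c^{-1}|a|\leq d_M(F(a),x)\leq c|a|$ for small $a$.
\item The identity noted just before the statement gives $v\cdot(y-x)=w\cdot a+E(a)$. I want the error to be $O(|a|^2)$, not merely $o(d_M^2)$, so I will obtain it directly. Taylor expansion of the $C^2$ map $F$ gives $F(a)-x=DF(0)a+O(|a|^2)$, with the constant uniform on a compact neighbourhood. Hence $v\cdot(y-x)=v\cdot DF(0)a+O(|v||a|^2)$.
\end{enumerate}
Choosing the normalization of $w$ compatible with the identification $v=DF(0)w$ (graph coordinates have $DF(0)$ an isometry onto $T_xM$), we get $v\cdot DF(0)a=w\cdot a$. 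Therefore $|E(a)|\leq K|a|^2\leq Kc^2d_M^2(y,x)$.

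For the forward direction, assume the coordinate inequality. For $y=F(a)$ with $a\in U_0$,
$$f(y)-f(x)\geq v\cdot(y-x)-E(a)-\sigma_0|a|^2\geq v\cdot(y-x)-(K+\sigma_0)c^2d_M^2(y,x).$$
Take $U$ to be an open set with $U\cap M=F(U_0)$ near $x$, which exists because $F$ is a homeomorphism onto a relatively open subset of $M$. Then the claim holds with $\sigma=(K+\sigma_0)c^2$.

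For the converse, substitute $y=F(a)$ into the assumed inequality. This gives $f(F(a))-f(x)\geq w\cdot a-(K+\sigma c^2)|a|^2$ for $a\in F^{-1}(U\cap M)$, which is a neighbourhood of $0$. This is exactly the proximal subgradient inequality, so $w\in\partial(f\circ F)(0)$ and hence $v\in\partial_M f(x)$.

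The main obstacle is the bookkeeping in the second comparison: keeping the error quadratic and uniform, rather than only $o(d_M^2)$, and checking that the identification of $\partial_M f(x)$ with $T_xM$ through $DF(0)$ matches the dot product exactly. I would handle this with the $C^2$ Taylor bound on $F$ over a fixed compact neighbourhood. If needed, I would also remove the dependence on the choice of chart by noting that any two graph charts differ by a $C^2$ diffeomorphism with controlled quadratic error.
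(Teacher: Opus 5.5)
Your proposal is correct and follows essentially the same route as the paper. Both arguments pull back through graph coordinates, apply the definition of the proximal subdifferential of $f\circ F$ at $0$, and absorb into $\sigma$ the discrepancy between $w\cdot a$ and $v\cdot(y-x)$ and between $|a|$ and $d_M(y,x)$. Your explicit $O(|a|^2)$ bound is just a more carefully quantified version of the paper's $o(d_M^2(x,y))$ remark (in graph coordinates one even has $v\cdot(F(a)-x)=w\cdot a$ exactly, since $v\in T_xM$ is orthogonal to $O(0,u(a))$).
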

\begin{proof}
    We recall that $v\in T_xM$ is in $\partial_M f(x)$ iff $v$ is the lowered index of $\tilde v$, where $\tilde v$ is such that $\tilde v=\tilde w(DF(0))^{-1},\tilde w\in\partial(f\circ F)(0)$.

    By definition, $\tilde w\in\partial(f\circ F)(0)$ iff there exists $\sigma\geq 0$ such that $$f(F(a))-f(F(0))\geq \tilde w(a)-\sigma|a|^2,\forall a\text{ near }0.$$

    Notice that if $w$ is the lowered index form of $\tilde w$, then since $(DF(0))^T|_{T_xM}=(DF(0))^{-1}$ we have $DF(0)w=v$. Hence, it follows that if $y=F(a)$ then $\tilde w(a)=w\cdot a=v\cdot(y-x)+o(d_M^2(x,y))$.

    The conclusion follows.
\end{proof}

We restate the next lemma, but the proof is identical to that in the paper of Brendle and Eichmair, using the prior lemma.

\begin{lemma}[{\cite[Lemma~4]{Brendle_2023}}]
    Let $x\in M$, then $\Tan(M,x)\partial_cf(x)\subseteq\partial_Mf(x)$.
\end{lemma}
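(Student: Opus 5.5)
The plan is to take any $y\in\partial_cf(x)$, set $v=\Tan(M,x)y\in T_xM$, and verify the characterization of $\partial_Mf(x)$ in the preceding lemma. That is, we look for $\sigma\geq0$ and a neighborhood $U\ni x$ such that
$$f(z)-f(x)\geq v\cdot(z-x)-\sigma d_M^2(z,x)\quad\text{for all }z\in U\cap M.$$

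The first step uses only the definition of the $c$-subdifferential and the convention that $f$ has been extended so that $f(z)-g(y)-z\cdot y\geq0$ for all $z\in\RR^{n+m}$. Since $y\in\partial_cf(x)$ we have $f(x)=g(y)+x\cdot y$. Subtracting this identity from the inequality at $z$ gives, for every $z\in\RR^{n+m}$, and in particular for $z\in M$,
$$f(z)-f(x)\geq y\cdot(z-x).$$
This is the same supporting-hyperplane inequality used in \cite[Lemma~4]{Brendle_2023}.

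The second step splits $y=\Tan(M,x)y+(y-\Tan(M,x)y)=v+y^\perp$ with $y^\perp\in T_x^\perp M$. The tangential part gives exactly $v\cdot(z-x)$, so it remains to bound the normal contribution $y^\perp\cdot(z-x)$ from below by $-\sigma d_M^2(z,x)$. Here we use the constant $K_x$ fixed above, defined by $\sup_{z\in D_x,\,w\in T_x^\perp M,\,|w|\leq1}|(x-z)\cdot w|\leq K_xd_M^2(x,z)$. Since $y\in\spt(\nu)\subseteq\bar B^{n+m}$, we have $|y^\perp|\leq|y|\leq1$, and hence
$$|y^\perp\cdot(z-x)|\leq K_xd_M^2(z,x)\quad\text{for }z\in D_x.$$
Taking $U$ to be a neighbourhood with $U\cap M\subseteq D_x$ and $\sigma=K_x$ yields the required inequality. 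By the preceding lemma, $v\in\partial_Mf(x)$.

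The only delicate point is that we must use the $C^2$ graph-coordinate estimate (the $K_x$ bound) in place of the exponential-map estimate of Brendle--Eichmair. We also need $|y|\leq1$, which comes from the support of $\nu$ lying in the closed unit ball, as in \cite[Theorem~1]{Brendle_2023}. Beyond that, the argument is a direct verification.
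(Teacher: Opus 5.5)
Your proof is correct and matches the paper's argument, which is just Brendle--Eichmair's proof of Lemma~4 run through the preceding characterization of $\partial_Mf(x)$. The supporting-hyperplane inequality $f(z)-f(x)\geq y\cdot(z-x)$, the bound $|y^\perp|\leq|y|\leq1$ from $\spt(\nu)\subseteq\bar B^{n+m}$, and the constant $K_x$ (which the paper introduced precisely to control $y^\perp\cdot(z-x)$ by $K_xd_M^2(z,x)$ on $D_x$) are exactly the needed ingredients.
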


\begin{remark}
    This is, in essence, just a set-valued and non-smooth version of the well-known fact that "the projection of the differential of a smooth function to the tangent space of a submanifold is equal to the Riemannian gradient of the function on that submanifold."
\end{remark}

We now define $\hat{\omega}_x(r)$ as $$\inf\{\omega:|y-\grad^Mu(\tilde x)|\leq \omega,\forall y\in\partial f(\tilde x),d_M(\tilde x,x)<\min\{r_x,r\}\}.$$

The proof of the following lemma is identical to that of \cite{Brendle_2023}, using remark \ref{rmrk:GradTaylor}.

\begin{lemma}[{\cite[Lemmas~5~and~6]{Brendle_2023}}]
   For each $x\in M$, $\hat{\omega}_x$ and $\hat{\delta}_x$ are non-decreasing and $\hat{\omega}_x(r)=o(r),~\hat{\delta}_x(r)=o(1)$.
\end{lemma}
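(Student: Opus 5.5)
The plan is to follow the proof of \cite[Lemmas~5~and~6]{Brendle_2023} closely. The only new input is remark \ref{rmrk:GradTaylor}, which replaces the use of the exponential map. Throughout, fix $x\in M$ at which $\hess^M(f)(x)$ exists, with $u$ the quadratic function from appendix \ref{AppendixB}.

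\textbf{Monotonicity.} This is formal. For $r\le r'$, the set of admissible $\tilde x$ with $d_M(\tilde x,x)<\min\{r_x,r\}$ is contained in the corresponding set for $r'$. So every $\omega$ admissible for $r'$ is also admissible for $r$, and taking infima gives $\hat\omega_x(r)\le\hat\omega_x(r')$. The same inclusion of index sets gives monotonicity of $\hat\delta_x$, since it is also defined as a supremum or infimum over points with $d_M(\tilde x,x)<\min\{r_x,r\}$ (and, in the Brendle--Eichmair setup, over $\bar y\in\partial_cf(\tilde x)$).

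\textbf{The estimate $\hat\omega_x(r)=o(r)$.} I read $\partial f(\tilde x)$ in the definition as $\partial_Mf(\tilde x)$. By remark \ref{rmrk:GradTaylor}, there is a function $\eta$ with $\eta(s)/s\to0$ as $s\downarrow0$ such that, for $\tilde x$ close to $x$ and every $y\in\partial_Mf(\tilde x)$,
\[
|y-\grad^Mu(\tilde x)|\le\eta(d_M(\tilde x,x)).
\]
Replacing $\eta$ by $s\mapsto\sup_{t\le s}\eta(t)$, which is still $o(s)$, we may take $\eta$ non-decreasing. Then $\hat\omega_x(r)\le\eta(r)=o(r)$.

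The content of the remark is the step I would check carefully. The plan for it:
\begin{enumerate}
\item Pull back to graph coordinates. There $f\circ F$ is semiconvex with a quadratic modulus.
\item Apply \cite[Theorem~14.25]{Villani_2009} to $f\circ F$ at $0$. This gives that its subgradients at $a$ equal $D(u\circ F)(a)+o(|a|)$.
\item Push forward via $[D(F^{-1})]^*$. The appendix gives that this map is $C^1$ with $F$ bilipschitz, so the $o(|a|)$ error becomes $o(d_M(\tilde x,x))$, uniformly in the chosen subgradient.
\end{enumerate}

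\textbf{The estimate $\hat\delta_x(r)=o(1)$.} I would argue by compactness and closedness of $\partial_cf$. Suppose $\tilde x_k\to x$ and $\bar y_k\in\partial_cf(\tilde x_k)$.
\begin{enumerate}
\item Since $\spt(\nu)$ is compact, a subsequence has $\bar y_k\to\bar y$.
\item Passing to the limit in $f(\tilde x_k)-g(\bar y_k)-\tilde x_k\cdot\bar y_k=0$ gives $\bar y\in\partial_cf(x)$. This uses continuity of $f$, which is Lipschitz, and of $g$, taken lower semicontinuous or continuous on $\spt(\nu)$ as in Theorem \ref{thm:optexist}; with the inequality $f-g-x\cdot y\ge0$, lower semicontinuity suffices.
\item By \cite[Lemma~4]{Brendle_2023}, $\Tan(M,x)\bar y\in\partial_Mf(x)$. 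Since $\hess^M(f)(x)$ exists, $\partial_Mf(x)$ is the singleton $\{\grad^Mu(x)\}$.
\item Since $\Tan(M,\tilde x_k)\to\Tan(M,x)$ (as $M$ is $C^2$), the tangential parts of the $\bar y_k$ converge to $\grad^Mu(x)$.
\end{enumerate}
Whatever quantity $\hat\delta_x$ measures (the deviation of $\partial_cf(\tilde x)$, or its tangential projection, from its value at $x$), it therefore tends to $0$ along every sequence, which gives $\hat\delta_x(r)=o(1)$. Because the limit is identified uniquely, no subsequence is needed at the end.

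\textbf{Main obstacle.} The delicate point is the uniformity in $\tilde x$ in the $\hat\omega$ estimate. Remark \ref{rmrk:GradTaylor} must hold for all subgradients at all nearby points simultaneously, not just almost everywhere. This is exactly where the semiconvexity of $f\circ F$ and the $C^1$ dependence of the chart, rather than the exponential map, are used.
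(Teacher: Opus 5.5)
Your monotonicity argument and your treatment of $\hat\omega_x(r)=o(r)$ are fine and follow the paper's route, which is the Brendle--Eichmair proof combined with remark \ref{rmrk:GradTaylor}. The $\hat\delta_x$ part, however, has a genuine gap, because you never use the definition of $\hat\delta_x$. It is the least $\delta$ with $[\hess^V(u)(\hat x)-\sff^M_{\hat x}\cdot\xi]|_{T_{\hat x}M}\geq-\delta I|_{T_{\hat x}M}$ for all $\hat x$ near $x$ and $\xi\in\partial_cf(\hat x)$. This is a lower bound on a second-order quantity, and $\xi$ enters it only through its \emph{normal} component, since $\sff^M$ is normal-valued. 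Your steps 3--4 only show that the tangential parts of the $\xi_k$ converge to $\grad^Mu(x)$, which gives no control of $\sff^M_{\hat x_k}\cdot\xi_k$. Moreover, the normal component of an element of $\partial_cf(x)$ is not determined by first-order data; it ranges over a set, which is why proposition \ref{prop:densest} integrates over $S$. So the sentence ``whatever quantity $\hat\delta_x$ measures \dots\ it therefore tends to $0$'' is exactly where the argument fails.

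What is missing is the second-order touching argument of \cite{Brendle_2023}. If $\xi\in\partial_cf(x)$, then $f(z)-z\cdot\xi\geq g(\xi)=f(x)-x\cdot\xi$ for every $z$. By proposition \ref{prop:smoothapprox} this gives $u(z)-z\cdot\xi\geq u(x)-x\cdot\xi-o(d_M^2(z,x))$ for $z\in M$. Writing this in graph coordinates at $x$, where the metric is $I+o(|v|)$, shows that the Hessian of $(u-\langle\cdot,\xi\rangle)|_M$ at $x$ is nonnegative. Since the Hessian along $M$ of the linear function $z\mapsto z\cdot\xi$ is $\sff^M_x\cdot\xi$, this yields $[\hess^V(u)(x)-\sff^M_x\cdot\xi]|_{T_xM}\geq0$ for \emph{every} $\xi\in\partial_cf(x)$.

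With this inequality in hand, your compactness set-up does finish the proof. Suppose $\hat x_k\to x$, $\xi_k\in\partial_cf(\hat x_k)$ and unit vectors $e_k\in T_{\hat x_k}M$ violate the bound with a fixed $\delta_0>0$. Pass to limits $\xi_k\to\xi\in\partial_cf(x)$ and $e_k\to e\in T_xM$. Continuity of $\hat x\mapsto\hess^V(u)(\hat x)$ and $\hat x\mapsto\sff^M_{\hat x}$ (this is where $M\in C^2$ enters) then contradicts the inequality at $x$. This gives $\limsup_{r\downarrow0}\hat\delta_x(r)\leq0$, which is the content of $\hat\delta_x(r)=o(1)$.

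There is also a smaller slip in your step 2. To conclude $\xi\in\partial_cf(x)$ you need \emph{upper}, not lower, semicontinuity of $g$. The inequality $g(\xi)\leq f(x)-x\cdot\xi=\lim_kg(\xi_k)$ holds automatically, and the reverse needs $\limsup_kg(\xi_k)\leq g(\xi)$. This is harmless if $g$ is taken to be the transform $y\mapsto\inf_z(f(z)-z\cdot y)$, which is upper semicontinuous.
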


Here we have put $\hat{\delta}_x(r)$ as the infimum of the set of all $\delta\in\RR$ satisfying $[\hess^V(u)(\hat x)-\sff^M_{\hat x}\cdot\xi]|_{T_{\hat x}M}\geq -\delta I|_{T_{\hat x}M}$, for each $\hat x\in M,\xi\in\bar{B}^{n+m}$ such that $d_M(\hat x,x)<\min\{r_x,r\}$ and $\xi\in\partial_cf(x)$.

Here $\sff^M_x\cdot\xi$ is the bilinear form defined by $(\sff^M_x\cdot\xi)(\eta,\gamma)=[\sff^M_x(\eta,\gamma)]\cdot\xi$.

Let us make a brief digression here to discuss what is going on with $\hat\delta,\hat\omega$. The idea behind what $\hat\omega_x$ is measuring, is precisely the "Taylor expansion" in \ref{rmrk:GradTaylor}. Since $\partial_Mf(\hat x)$ is multi-valued, we want a \textit{single-valued} function which measures the same thing that the Taylor expansion measures. 

On the other hand, following \cite[p.~5]{CASTILLON201079} if $f$ was smooth and $u=f$, we would have that \begin{align*}
    [\hess^V(u)(\hat x)-\sff^M_{\hat x}\cdot\xi]|_{T_{\hat x}M}&=[D^2u(\hat x)+\sff^M_{\hat x}\cdot\grad^Mu(\hat x)-\sff^M_{\hat x}\cdot\grad^Mu(\hat x)]|_{T_{\hat x}M}
    \\&=[D^2u(\hat x)]|_{T_{\hat x}M}\geq 0.
\end{align*}
Since $f\neq u$ and $f$ is not smooth, and there is no single well-defined $\grad^Mf(\hat x)$, we must make due with an approximate version of this, which is represented by $\hat \delta$.

We now consider \cite[Lemma~7,~Lemma~8,~Proposition~9]{Brendle_2023}.

Instead of using the sup-norm to define $W_r,E_r$, we use the standard Euclidean norm. Namely, we set $0<r<r_x$ and take $$W_r=\{y\in T_xM:|y|\leq r\},~E_r=\exp_x(W_r)=\bar B^M_r(x).$$

This will provide a more straightforward way to use the $n$-dimensional densities, to align with standard measure theory conventions (and, of course, the conventions used in prior sections). For clarity, we will also use $\bar B^M_r(x)$ instead of $E_r$ for the closed Riemannian ball.

\begin{remark}
    Since, near $0$, $\exp_x$ is a $C^1$ diffeomorphism onto a neighborhood of $x$ in $M$ by \cite{lange2024regularitygeodesicflowsubmanifolds}, we see that upon possibly shrinking $r_x$, we may assume that $\bar B^M_r(x)$ is compact.
\end{remark}

The definition of $A_r$ remains the same: \begin{align*}
    A_r=\{(\hat x,z):&\hat x\in \bar B^M_r(x),z\in T^\perp_{\hat x}M,|\grad^Mu({\hat x})|^2+|z|^2\leq(1+\omega(r))^2,
\\
&[\hess^V(u)(\hat x)-\sff^M_{\hat x}\cdot\xi]|_{T_{\hat x}M}\geq -\delta I|_{T_{\hat x}M}\}.
\end{align*}

Since $u$ is smooth and $\bar B^M_r(x)$ is compact, this is closed and bounded, thus is also compact.

On the other hand, we define $G_r$ as in \cite[p.~7]{Brendle_2023}, namely take $\Phi(\hat x,z)=\grad^Mu(\hat x)+z,\hat x\in M,z\in T^\perp_{\hat x}M$, and let $$G_r=\bar B^{n+m}\cap\{p:|p-\Phi(A_r)|\leq\omega(r)\}.$$

Notice that since $\Phi$ is continuous and $A_r$ is compact, the set $\{p:|p-\Phi(A_r)|\leq\omega(r)\}$ is closed and bounded, and hence $G_r$ is compact.

We would like to show that for each $0<r<r_x$, $[(\bar{B}^{n+m}\backslash G_r)\times \bar B^M_r(x)]\cap\partial_cf=\emptyset$, and then we may deduce that $\mu(\bar B^M_r(x))\leq\nu(G_r)$. One thing to note here is that, since $\bar B^M_r(x)\subseteq M$, the resulting density will not be that of the measure $\mu$, but rather the restriction $\mu\Lcorner M$.

The following lemma is true:
\begin{lemma}[{\cite[Lemmas~7~and~8]{Brendle_2023}}]
    If $0<r<r_x$ then $[(\bar{B}^{n+m}\backslash G_r)\times \bar B^M_r(x)]\cap\partial_cf=\emptyset$. In particular, $\mu\Lcorner M(\bar B^M_r(x))=\mu(\bar B^M_r(x))\leq\nu(G_r)$.
\end{lemma}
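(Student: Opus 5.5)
The plan is to follow the argument of \cite[Lemmas~7~and~8]{Brendle_2023}, replacing the smooth exponential-map and subdifferential facts with their $C^2$ substitutes: the proximal subdifferential $\partial_Mf$, the quadratic-error estimate with constant $K_x$, and Remark \ref{rmrk:GradTaylor}. The second assertion will then follow at once from the transport lemma above (the adaptation of \cite[Lemma~3]{Brendle_2023}). Indeed, $\bar B^M_r(x)$ and $G_r$ are compact, as already noted, and $G_r\subseteq\bar B^{n+m}$. Also $\bar B^M_r(x)\subseteq M$, so $\mu\Lcorner M(\bar B^M_r(x))=\mu(\bar B^M_r(x))$. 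So everything reduces to the disjointness statement.

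For disjointness, fix $\bar x\in\bar B^M_r(x)$ and $\xi\in\partial_cf(\bar x)$. Since $\partial_cf(\bar x)\subseteq\spt(\nu)\subseteq\bar B^{n+m}$, we have $|\xi|\le 1$, and we must show $\xi\in G_r$. Decompose $\xi=\Tan(M,\bar x)\xi+z$ with $z\in T^\perp_{\bar x}M$.

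\textbf{Step 1 (tangential part).} By the lemma $\Tan(M,x)\partial_cf(x)\subseteq\partial_Mf(x)$ (i.e.\ \cite[Lemma~4]{Brendle_2023}), applied at $\bar x$, we get $\Tan(M,\bar x)\xi\in\partial_Mf(\bar x)$. Since $d_M(\bar x,x)\le r<r_x$, the definition of $\hat\omega_x$ (taking $\omega(r)\ge\hat\omega_x(r)$, as in \cite{Brendle_2023}) gives
\[
|\Tan(M,\bar x)\xi-\grad^Mu(\bar x)|\le\omega(r).
\]
Set $p=\grad^Mu(\bar x)+z=\Phi(\bar x,z)$. Then $|\xi-p|\le\omega(r)$.

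\textbf{Step 2 (norm condition).} We need $|\grad^Mu(\bar x)|^2+|z|^2\le(1+\omega(r))^2$. Both $\grad^Mu(\bar x)$ and $\Tan(M,\bar x)\xi$ lie in $T_{\bar x}M$, and $z\perp T_{\bar x}M$. Hence
\[
\bigl(|\grad^Mu(\bar x)|^2+|z|^2\bigr)^{1/2}=|p|\le|\xi|+\omega(r)\le 1+\omega(r).
\]

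\textbf{Step 3 (Hessian condition).} We need $[\hess^V(u)(\bar x)-\sff^M_{\bar x}\cdot\xi]|_{T_{\bar x}M}\ge-\delta I$ with $\delta=\delta(r)\ge\hat\delta_x(r)$. This is literally the defining property of $\hat\delta_x$, since $\xi\in\partial_cf(\bar x)$ and $d_M(\bar x,x)<\min\{r_x,r\}$. Here I read the condition ``$\xi\in\partial_cf(x)$'' in that definition as $\xi\in\partial_cf(\hat x)$. Using $\xi$ versus $z$ in $\sff\cdot\xi$ makes no difference, since $\sff$ is normal-valued.

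Combining the steps, $(\bar x,z)\in A_r$ and $|\xi-\Phi(\bar x,z)|\le\omega(r)$, so $\xi\in G_r$.

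The main obstacle I expect is Step 3 and its consistency with Lemmas 5 and 6. In \cite{Brendle_2023} the Hessian inequality is derived from the second-order inequality $f(y)-g(\xi)-y\cdot\xi\ge0=f(\bar x)-g(\xi)-\bar x\cdot\xi$ along geodesics. Here that derivation has to be done in graph coordinates, using the $K_x$ bound to control the normal component $(y-\bar x)\cdot z$ by $\sff^M_{\bar x}(\cdot,\cdot)\cdot z$ up to $o(d_M^2)$. If $\hat\delta_x$ is taken as a definition, Step 3 is immediate. But proving $\hat\delta_x(r)=o(1)$ (the cited lemma) is where that expansion is actually used. The present lemma should only need the definitions together with Steps 1 and 2.
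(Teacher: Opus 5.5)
Your proposal is correct and is essentially the argument of \cite[Lemmas~7~and~8]{Brendle_2023}, which the paper invokes without modification. You split $\xi$ into its tangential part and $z\in T^\perp_{\bar x}M$, control the tangential part by \cite[Lemma~4]{Brendle_2023} and $\hat\omega_x$, get the norm bound from orthogonality and $|\xi|\le 1$, get the Hessian condition from $\hat\delta_x$ (reading $\partial_cf(\hat x)$, as you rightly do), and finish with the adapted Lemma~3. One small thing to tidy: points with $d_M(\bar x,x)=r$ lie in the closed ball but are not covered by the strict inequality $d_M(\tilde x,x)<\min\{r_x,r\}$ in the definitions of $\hat\omega_x(r)$ and $\hat\delta_x(r)$, so your assertion $d_M(\bar x,x)<\min\{r_x,r\}$ can fail there. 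The fix is to take $\omega(r)=\lim_{s\downarrow r}\hat\omega_x(s)$ and $\delta(r)=\lim_{s\downarrow r}\hat\delta_x(s)$ (or $\hat\omega_x(2r),\hat\delta_x(2r)$), which are still $o(r)$ and $o(1)$ by monotonicity.
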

The proof requires no modifications.

Then we must deal with proposition 9. 

\begin{proposition}[{\cite[Proposition~9]{Brendle_2023}}]
\label{prop:densest}
    Let everything be as above.

    Let $$S=\{z\in T^\perp_xM:|\grad^Mu(x)|^2+|z|^2\leq1,[\hess^V(u)(x)-\sff^M_x\cdot z]|_{T_xM}\geq0\}.$$

    Then $$\Theta^n(\mu\Lcorner M,x)\leq \int_S\det_{T_xM}\left(\hess^V(u)(x)-\sff^M_x\cdot z\right)\rho(|\grad^Mu(x)|^2+|z|^2)\dv z$$
\end{proposition}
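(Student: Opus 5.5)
The plan is to follow the proof of \cite[Proposition~9]{Brendle_2023}, with the box $E_r$ replaced by the Riemannian ball $\bar B^M_r(x)$. The first step is to invoke the preceding lemma, which gives $\mu(\bar B^M_r(x))\leq\nu(G_r)$ for every $0<r<r_x$. Dividing by $\omega_n r^n$ and taking $\limsup_{r\downarrow0}$, the left-hand side has $\limsup$ equal to $\Theta^{*n}(\mu\Lcorner M,x)$. Indeed, $\exp_x$ is a $C^1$ diffeomorphism near $0$ by \cite{lange2024regularitygeodesicflowsubmanifolds}, so $\bar B^M_r(x)$ and the Euclidean ball $B_r(x)\cap M$ differ only by factors $1+o(1)$ in radius. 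Since $\mu\Lcorner M$ is supported on $M$, its density may be computed with either family of balls, by sandwiching $B_{r(1-o(1))}(x)\cap M\subseteq\bar B^M_r(x)\subseteq B_r(x)$. The whole task therefore reduces to the estimate
\[
\limsup_{r\downarrow 0}\frac{\nu(G_r)}{\omega_n r^n}\leq\int_S\det_{T_xM}\left(\hess^V(u)(x)-\sff^M_x\cdot z\right)\rho(|\grad^Mu(x)|^2+|z|^2)\dv z .
\]

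For this estimate I would first bound $\nu(G_r)\leq\int_{G_r}\rho(|p|^2)\dv p$, using the form of $\nu$ from \cite[Theorem~1]{Brendle_2023} with its density $\rho$. Next I would cover $G_r$ by the $\omega(r)$-neighbourhood of $\Phi(A_r)$ and estimate $\int_{G_r}\rho$ by the area formula applied to $\Phi(\hat x,z)=\grad^Mu(\hat x)+z$ on the normal bundle over $\bar B^M_r(x)$. Here $u$ is smooth, so $\Phi$ is $C^1$ on the $C^1$ normal bundle of the $C^2$ manifold $M$. This is the step where one needs $M$ itself to be $C^2$, which is why the remark preceding the proposition insists on using $M$ rather than the quadratic approximation. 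The Jacobian of $\Phi$ at $(\hat x,z)$ is $\det_{T_{\hat x}M}(\hess^V(u)(\hat x)-\sff^M_{\hat x}\cdot z)$, by the computation in \cite[p.~5]{CASTILLON201079} already recalled above. On $A_r$ this form is $\geq-\delta(r)I$, so the absolute value of the Jacobian is controlled by the determinant plus $O(\delta(r))$.

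The main obstacle is the neighbourhood enlargement. The volume of the $\omega(r)$-tube around $\Phi(A_r)$ has to be $o(r^n)$ beyond the area-formula contribution. I would handle this exactly as Brendle--Eichmair do. Parametrize the tube by $(\hat x,z)$ with $\hat x$ in $\bar B^M_{r+C\omega(r)}(x)$ (or in a slightly enlarged set) and $z$ in a normal ball of radius $(1+\omega(r))+\omega(r)$. This works because $\omega(r)=o(r)$ by the lemma on $\hat\omega,\hat\delta$, so enlarging the base radius by $\omega(r)$ changes $\HH^n$ of the base only by a factor $1+o(1)$. Dividing by $\omega_nr^n$ and using continuity of $\hess^V(u)$, $\sff^M$, and $\rho$ at $x$ together with $\delta(r)=o(1)$, the integrand converges uniformly to its value at $x$. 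The constraint set in $z$ then shrinks to $S$: the conditions $|\grad^Mu|^2+|z|^2\leq(1+\omega)^2$ and matrix $\geq-\delta I$ pass to the limits $\leq1$ and $\geq0$, and by dominated convergence, together with the nonnegativity of the determinant on the limit set, the $z$-integrals converge. This yields the displayed bound and hence the proposition.
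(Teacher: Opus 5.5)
Your reduction to bounding $\limsup_{r\downarrow0}\nu(G_r)/(\omega_nr^n)$ matches how the paper finishes: you use $\mu(\bar B^M_r(x))\leq\nu(G_r)$ and compare Riemannian with Euclidean balls. The gap is the tube step. You assert that the $\omega(r)$-neighbourhood of $\Phi(A_r)$ lies in $\Phi$ of $\{(\hat x,z):\hat x\in\bar B^M_{r+C\omega(r)}(x),\ |z|\leq1+2\omega(r)\}$. Moving the base point by a vector $\xi$ of size $O(\omega(r))$ moves $\Phi$ tangentially by about $L_z\xi$, where $L_z=\hess^V(u)(\hat x)-\sff^M_{\hat x}\cdot z$. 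To absorb an arbitrary tangential displacement of size $\omega(r)$ you therefore need a uniform bound on $L_z^{-1}$. There is none. $A_r$ only requires $L_z\geq-\delta(r)I$, so it contains points where $L_z$ is singular or nearly so, e.g. near $\partial\{z:L_z\geq0\}$, where $\det L_z=0$.

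A concrete failure: take $M$ flat near $x$ and $\hess^V(u)(x)$ with a kernel vector $e$. Then $\Phi(\hat x,z)\cdot e$ is constant, so $\Phi$ of \emph{any} set is $\mathcal{L}^{n+m}$-null. An $\omega(r)$-neighbourhood of a nonempty set is not null, yet your inclusion would force $\nu(G_r)=0$. In such cases the true bound $\nu(G_r)=o(r^n)$ comes from the volume of a thin neighbourhood of a degenerate linear image, roughly $\prod_i(r\sigma_i+\omega(r))$ with $\sigma_i$ the singular values of $L_z$. No area-formula parametrisation over an enlarged domain captures this. Splitting off the region where $|L_z^{-1}|\leq K$ does not help, because it leaves exactly this estimate to be done.

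What is missing is the Brendle--Eichmair slicing argument. The paper uses it unchanged apart from one modification your proposal never mentions: a trivialisation of the normal bundle near $x$, which lets you compare $T^\perp_{\hat x}M$ with $T^\perp_xM$. Your claims that the integrand converges uniformly and that the constraint sets shrink to $S$ already presuppose such a trivialisation. Parallel transport is not available at this regularity. The paper instead uses $\Psi(\hat x,y)=(\exp_x(\hat x),\nor(M,\exp_x(\hat x))y)$, which is $C^1$ with $D\Psi(0,y)$ block lower-triangular with identity diagonal blocks.

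A first-order expansion of $\Phi\circ\Psi$ at $(0,y)$, uniform for bounded $y$, then shows two things. First, the slice of $G_r$ over each normal coordinate $\zeta\in T^\perp_xM$ lies within $o(r)$ of $L_\zeta(B_r)$, where $L_\zeta=\hess^V(u)(x)-\sff^M_x\cdot\zeta$. The normal shear $\sff^M_x(\xi,\grad^Mu(x))$ changes $y$ by $O(r)$, and hence changes $L_y\xi$ only by $O(r^2)$. Second, the admissible $\zeta$ lie in sets shrinking to $S$. Since $\mathcal{L}^n(K+B_\varepsilon)\to\mathcal{L}^n(K)$ for compact $K$, $r^{-n}$ times the slice measure has $\limsup$ at most $\omega_n|\det L_\zeta|$, whether or not $L_\zeta$ is singular. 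Dominated convergence in $\zeta$ then gives the required bound.
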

\begin{proof}
    We cannot generalize the proof immediately, since the parallel transport is not well-defined in this regularity. 

    To find a replacement we change the map $\Psi$ to $$\Psi(\hat x,y)=(\exp_x(\hat x),\nor(M,\exp_x(\hat x))y).$$

    This is a $C^1$ map by \cite{lange2024regularitygeodesicflowsubmanifolds}, and moreover $$D\Psi(x,y)=\begin{pmatrix}
        Id_{T_xM} & 0
        \\
        * & Id_{T^\perp_xM}.
    \end{pmatrix}$$

    The rest of the proof is the same as that of \cite[Proposition~9]{Brendle_2023} up to the density estimate. Since the factor occurring in $M$ is a Riemannian ball instead of a Riemannian cube, we notice that the estimate is now $$\mu\Lcorner M(\bar B^M_r(x))\leq \nu(G_r)\leq \omega_nr^n\int_S\det_{T_xM}\left(\hess^V(u)(x)-\sff^M_x\cdot z\right)\rho(|\grad^Mu(x)|^2+|z|^2)\dv z.$$

    From here, the density estimate follows, since if $\bar B^M_r(x)$ denotes the (closed) Riemannian ball of radius $r$ and center $x$, then:
    \begin{enumerate}
        \item $$\lim_{r\downarrow 0}\frac{\HH^n(\bar B^M_r(x))}{\omega_nr^n}=1.$$ 
        \item If $\hat B_r(x)$ denotes the intersection of $M$ with the \textit{Euclidean} ball of radius $r$ and center $x$, then as $r\downarrow 0$ $$\hat B_{(1-O(1))r}(x)\subseteq \bar B^M_r(x)\subseteq \hat B_{(1+O(1))r}(x).$$
    \end{enumerate}
\end{proof}

We now finally deduce:
\begin{lemma}[{\cite[Corollary~10]{Brendle_2023}}]
    Let everything be as above, and $[\Theta^n(\mu\Lcorner M,x)]^{1/n}=c>0$. Then \begin{equation}
    \label{eqn:first}
        \frac{n}{\alpha^{1/n}}c\leq\tr_{T_xM}(\hess^M(f)(x))+|H(V,x)|.
    \end{equation}
    In particular, \begin{equation}
    \label{eqn:intrhess}
        \frac{n}{\alpha^{1/n}}\Theta^n(\mu\Lcorner M,x)\leq[\tr_{T_xM}(\hess^M(f)(x))+|H(V,x)|][\Theta^n(\mu\Lcorner M,x)]^{(n-1)/n}.
    \end{equation}
\end{lemma}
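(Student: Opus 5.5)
The plan is to follow the proof of \cite[Corollary~10]{Brendle_2023}, starting from the density estimate of proposition \ref{prop:densest}. We may assume $c>0$; otherwise (\ref{eqn:intrhess}) is trivial, since the right-hand side is nonnegative once we know $\tr_{T_xM}(\hess^M(f)(x))\geq0$, which holds by convexity of $f$. With $c>0$, the set $S$ must have positive measure.

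\textbf{Step 1: bound the integrand on $S$.} For $z\in S$ the form $\hess^V(u)(x)-\sff^M_x\cdot z$ is positive semidefinite on $T_xM$. The arithmetic--geometric mean inequality then gives
\[
\det_{T_xM}\left(\hess^V(u)(x)-\sff^M_x\cdot z\right)\leq\left(\frac{\tr_{T_xM}(\hess^V(u)(x))-H\cdot z}{n}\right)^n,
\]
where $H=\tr_{T_xM}\sff^M_x$ is the mean curvature vector of $M$ at $x$. Since $|z|\leq1$ on $S$, we have $-H\cdot z\leq|H|$. By the results of section \ref{section:C2Geo} and the choice of decomposition, $H(M,x)=H(V,x)$ at the relevant points, and $\hess^V(u)(x)=\hess^M(f)(x)$.

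\textbf{Step 2: integrate.} This yields
\[
c^n\leq\left(\frac{\tr_{T_xM}(\hess^M(f)(x))+|H(V,x)|}{n}\right)^n\int_{\{z\in T_x^\perp M:\,|\grad^Mu(x)|^2+|z|^2\leq1\}}\rho(|\grad^Mu(x)|^2+|z|^2)\dv z .
\]
The remaining integral is bounded by $\alpha$, exactly as in \cite{Brendle_2023}; this is the defining property of the constant $\alpha$ together with $\rho$ in \cite[Theorem~1]{Brendle_2023}. The bound holds uniformly in $|\grad^Mu(x)|\leq1$ because $\rho$ is a nonnegative weight and the fibre integral over the $m$-dimensional normal disk of radius $\sqrt{1-|\grad^Mu(x)|^2}$ is maximised at $\grad^Mu(x)=0$. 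Taking $n$-th roots gives (\ref{eqn:first}), and multiplying by $c^{n-1}=[\Theta^n(\mu\Lcorner M,x)]^{(n-1)/n}$ gives (\ref{eqn:intrhess}).

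\textbf{Main obstacle.} The delicate point is confirming that the normalisation of $\rho$ and $\alpha$ in \cite[Theorem~1]{Brendle_2023} indeed gives $\int\rho\leq\alpha$ over the normal slice in our setting. In \cite{Brendle_2023} this is where $m\geq2$ enters. I would check it by writing the slice integral in polar coordinates in $T^\perp_xM$ and comparing it with Brendle--Eichmair's computation, which is unchanged here since it only involves the fibre $T^\perp_xM\cong\RR^m$. A secondary check is the a.e. identification of $\hess^V(u)(x)$ with $\hess^M(f)(x)$ and of $H(M,x)$ with $H(V,x)$. Both are supplied by the appendices and the discussion following the Alexandrov-type results, at points where the Hessian exists.
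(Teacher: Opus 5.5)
Your argument is correct and is essentially the paper's proof. The paper argues by contradiction, assuming $\tr_{T_xM}(\hess^M(f)(x))+|H|<cn\beta^{-1/n}$ for some $\beta>\alpha$, but it uses exactly your ingredients: AM--GM on $S$, where $\hess^V(u)(x)-\sff^M_x\cdot z\geq0$; the bound $-H\cdot z\leq|H|$ from $|z|\leq1$; the slice bound $\int_S\rho(|\grad^Mu(x)|^2+|z|^2)\dv z\leq\alpha$ from \cite{Brendle_2023}; and Proposition~\ref{prop:densest}. One correction to an aside: your claim that $\tr_{T_xM}(\hess^M(f)(x))\geq0$ by convexity is false in general, since the intrinsic Hessian contains the normal part of $Df$ paired with $\sff^M_x$. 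It is harmless, because $c>0$ is assumed, and the nonnegativity you actually use, $\tr_{T_xM}(\hess^M(f)(x))+|H(V,x)|\geq0$, follows from $S\neq\emptyset$.
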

\begin{proof}
    The proof is very similar to that of \cite[Corollary~10]{Brendle_2023}, but we will include the proof anyways, since there are some minor modifications.

    If not, there exists some $\beta>0$ with $\beta^{-1}\alpha<1$ such that $\tr(\hess^M(f)(x))+|H(M,x)|<c\frac{n}{\beta^{1/n}}$.

    Then by AM-GM $$0\leq\det_{T_xM}\left(\hess^M(u)(x)-\sff^M_x\cdot z\right)\leq\left(\frac{\tr(\hess^M(f)(x))+|H(M,x)|}{n}\right)^n<c^n\beta^{-1}$$

    With $S$ as in the prior proposition we have \begin{align*}
        \Theta^n(\mu\Lcorner M,x)&\leq \int_S\det_{T_xM}\left(\hess^V(u)(x)-\sff^M_x\cdot z\right)\rho(|\grad^Mu(x)|^2+|z|^2)\dv z
        \\
        &\leq \beta^{-1}\alpha c^n <c^n=\Theta^n(\mu\Lcorner M,x).
    \end{align*}
    This is clearly impossible, establishing \ref{eqn:first}.

    To see equation \ref{eqn:intrhess} multiply both sides of \ref{eqn:first} by $[\Theta^n(\mu\Lcorner M,x)]^{(n-1)/n}$.
\end{proof}

\section{Proof of Theorem \ref{Thm:Theorem1}}

It is clear that the theorem is trivially true if $F=0$ identically. If not, then $\int F^{n/(n-1)}\intvar{x}>0$.

The proof in the general case follows by scaling the domain of $||V||,F$ by a suitable $r>0$ to get $\int F^{n/(n-1)}\intvar{x}=1$, and then scaling back, so we will only prove theorem 1 in the case $\int F^{n/(n-1)}\intvar{x}=1$.

\begin{proof}[Proof of Theorem \ref{Thm:Theorem1} if $\int F^{n/(n-1)}\intvar{x}=1$]

We apply the results of the last section with $\dv\mu=F^{n/(n-1)}\intvar{x}$. 

Namely, since $V$ is $C^2$ rectifiable with compact support, apply \ref{eqn:intrhess} to each submanifold $M_i$ in the decomposition of $\spt(||V||)$ (see section \ref{section:C2Geo}) to get that at $||V||$-a.e. $x\in M_i$ $$\frac{n}{\alpha^{1/n}}\Theta^n(\mu\Lcorner M,x)\leq[\tr_{T_xM}(\hess^M(f)(x))+|H(V,x)|][\Theta^n(\mu\Lcorner M,x)]^{(n-1)/n}.$$ 

On the other hand by \cite[2.10.19(4)]{Federer_1996} at $\HH^n$-a.e. $x\in M_i$ $\Theta^n(\mu-M_i,x)=0$, hence at all such $x$ $$F^{n/(n-1)}(x)\theta^V(x)=\Theta^n(\mu,x)=\Theta^n(\mu\Lcorner M,x).$$

Since $V$ is rectifiable, this is also true at $||V||$-a.e. $x\in M_i$. Hence in particular at all such $x$, $$\frac{n}{\alpha^{1/n}}F^{n/(n-1)}(x)\theta^V(x)\leq F(x)[\theta^V(x)]^{(n-1)/n}[\tr_{T_xM}(\hess^M(f)(x))+|H(M,x)|].$$

Additionally, at such a point, $\theta\in\mathbb N$, hence $[\theta^V(x)]^{(n-1)/n}\leq\theta^V(x)$.

In particular, combining this with the end of section \ref{section:C2Geo}, we have that at $||V||$-a.e. $x\in M_i$ the following four statements hold:
\begin{enumerate}
    \item $$\frac{n}{\alpha^{1/n}}F^{n/(n-1)}(x)\theta^V(x)\leq F(x)\theta^V(x)[\tr_{T_xM}(\hess^M(f)(x))+|H(M,x)|].$$
    \item $\hess^{M_i}(f)(x)=\hess^V(f)(x)$.
    \item $H(M_i,x)=H(V,x)$.
    \item $T_xM=T_xV$.
\end{enumerate}

Therefore at $||V||$-a.e. $x$ we have that $$\frac{n}{\alpha^{1/n}}F^{n/(n-1)}(x)\theta^V(x)\leq F(x)\theta^V(x)[\tr_{T_xV}(\hess^V(f)(x))+|H(V,x)|].$$

Using theorem \ref{thm:alex} we therefore deduce that \begin{align*}
   \frac{n}{\alpha^{1/n}} \int F^{n/(n-1)}(x)\intvar{x}&\leq\int F(x)[\tr_{T_xV}(\hess^V(f)(x))+|H(V,x)|]\intvar{x}
   \\
   &\leq\int -\grad^VF(x)\cdot\grad^V(f)(x)\intvar{x}+\int F(x)|H(V,x)|\intvar{x}
   \\
   &\leq\int |\grad^VF(x)|\intvar{x}+\int F(x)|H(V,x)|\intvar{x}.
\end{align*}
    By the same method as in \cite[Section~3]{Brendle_2023} we may choose a sequence of $\rho_i$ such that the corresponding $\frac{n}{\alpha_i^{1/n}}\rightarrow C_{n,m}$.

    The second equality in theorem \ref{Thm:Theorem1} follows since $$\int F(x)|H(V,x)|\intvar{x}+\int F(x)\dv||\delta V||_{sing}(x)=\int F(x)\dv||\delta V||(x)$$ and $$\int F(x)\dv||\delta V||_{sing}(x)=0.$$
\end{proof}

\section{Estimates at the Boundary}

We now use the theory of Allard\cite{Allard_1975bnd} to provide the estimates for theorem 3.

Namely, by \cite[Lemma~3.1(1)]{Allard_1975bnd} we have that $$||\partial V||(F)\coloneqq\lim_{r\downarrow0}\frac{1}{r}\int_{0<dist(x,\spt(||\delta V||_{sing}))<r} F\intvar{x}$$ exists for any non-negative and smooth $F$. In particular, for any such $F$ the functionals $||\partial V||^+(F)$ and $||\partial V||^-(F)$ are linear and equal.

This, combined with the fact that $\HH^n(\spt(||V||)\cap B)\leq\HH^n(B)=0$ is exactly the first part of theorem 3.

To see the second part, let $F$ is smooth and non-negative. We have 
\begin{lemma}
    If $G$ is a modified normal field for $(V,B)$, then $$\varepsilon_G||\partial V||(F)\leq\int_B F\dv||\delta V||=\int F\dv||\delta V||_{sing}.$$
\end{lemma}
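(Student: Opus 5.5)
Let $F\geq0$ be smooth. The plan is to test the first variation formula \eqref{eqn:divthm} against a vector field built from $G$ and a cutoff in the distance $d_B$. For $r>0$ let $\phi_r(t)=\min\{1,t/r\}$, so $\phi_r(d_B)$ is Lipschitz, vanishes on $B$, equals $1$ off the $r$-neighbourhood of $B$, and $\grad^V\phi_r(d_B)=\frac1r 1_{\{0<d_B<r\}}\grad^V d_B$ $||V||$-a.e. Also put $\psi_r=1-\phi_r(d_B)$; this equals $1$ on $B$ and is supported in $\{d_B<r\}$. We apply the integration-by-parts with the $C^1$ vector field $X=F\,DG$ multiplied by $\psi_r$. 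A mollification argument is needed here, because $\psi_r$ is only Lipschitz and $DG$ is only continuous. It is legitimate since $\delta V$ is a Radon measure and both sides are continuous under uniform convergence with uniformly bounded Lipschitz constants (compare the proof of the divergence proposition above). The result is
\begin{align*}
\int \psi_r F\,DG\cdot \dv(\delta V)_{sing}
&=\int \grad^V\psi_r\cdot F DG\intvar{x}+\int \psi_r\bigl(\grad^VF\cdot DG+F\,div_{T_xV}DG\bigr)\intvar{x}\\
&\quad+\int\psi_r F\,DG\cdot H(V,x)\intvar{x}.
\end{align*}
Since $G$ is only $C^1$, the term $div_{T_xV}DG$ is not pointwise meaningful. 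I would first mollify $G$ to $G_\delta$ so that $DG_\delta\to DG$ uniformly. Then I would carry out the limits in the order $r\downarrow0$ first and $\delta\downarrow0$ afterwards. This ordering must be done carefully; the alternative is to write the last two bulk terms together as $\int\psi_r\, div_V(FDG)$, which is harmless after mollification.

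Next take $r\downarrow0$. The bulk terms are all integrals of $||V||$-integrable quantities: $H\in L^1_{loc}$ by the hypothesis $H\in L^2_{loc}$, and the other integrands are bounded because $G_\delta$ is smooth. They are multiplied by $\psi_r$, which is bounded by $1$ and tends to $0$ $||V||$-a.e. because $||V||(B)=0$ (already established). By dominated convergence these terms vanish. The boundary term converges to $\int_B F\,DG\cdot\dv(\delta V)_{sing}$ by dominated convergence with respect to $||\delta V||_{sing}$, since $\psi_r\to1_B$ pointwise. Its absolute value is at most $\int_B F\dv||\delta V||_{sing}$, because $|DG|=1$ on $B$ (after $\delta\to0$, using uniform convergence on $B$).

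The remaining term is
\[
\int\grad^V\psi_r\cdot FDG\intvar{x}=-\frac1r\int_{\{0<d_B<r\}}F\,\grad^Vd_B\cdot DG\intvar{x}.
\]
By the definition of $\varepsilon_G$, for every $\eta>0$ and all small $r$ we have $\grad^Vd_B\cdot DG\geq\varepsilon_G-\eta$ $||V||$-a.e. on $\{0<d_B<r\}$. Hence this term is at most $-(\varepsilon_G-\eta)\frac1r\int_{\{0<d_B<r\}}F\intvar{x}$. The mollification error is $\|DG_\delta-DG\|_\infty\cdot\frac1r\int_{\{0<d_B<r\}}F$, which is controlled since $||\partial V||^+(F)<\infty$. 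Taking $r\downarrow0$ and using that the limit $||\partial V||(F)$ exists gives $(\varepsilon_G-\eta)||\partial V||(F)\leq\int_BF\dv||\delta V||_{sing}+o_\delta(1)$. Letting $\delta,\eta\to0$ then gives the claim. The equality $\int_BF\dv||\delta V||=\int F\dv||\delta V||_{sing}$ holds because $\spt||\delta V||_{sing}=B$ and the absolutely continuous part $|H|\,||V||$ gives no mass to $B$, since $||V||(B)=0$.

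The main obstacle is making the interchange of the limits $r\to0$ and $\delta\to0$ rigorous. In particular, the $\delta$-dependent divergence terms must be multiplied by $\psi_r$ and vanish for each fixed $\delta$ before $\delta\to0$, while the crucial $1/r$ term must be controlled uniformly in $\delta$. For this I would use the uniform bound $\sup_r\frac1r\int_{\{0<d_B<r\}}F<\infty$ coming from Allard's Lemma 3.1.
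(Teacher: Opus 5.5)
Your proposal is correct and follows essentially the paper's route. The paper quotes the computation from Allard's Lemma 3.1(2) to obtain $\int F\,DG\cdot\dv(\delta V)_{sing}=-\lim_{r\downarrow0}\frac1r\int_{\{0<d_B<r\}}F\,DG\cdot\grad^Vd_B\intvar{x}$, and then applies the definition of $\varepsilon_G$ together with $|DG|=1$ on $B$. You instead re-derive that identity by testing against the cutoff $\psi_rF\,DG_\delta$; your mollification of $G$ is a welcome addition, since $DG$ is only continuous and the paper passes over this.

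One small correction: the integrability of $H$ near $B$ comes from $||\delta V||$ being Radon with compact support, which gives $H\in L^1(||V||)$. It does not come from the $L^2_{loc}$ hypothesis, because that hypothesis only holds on $W$, away from $B$.
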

\begin{proof}
    From the proof of \cite[Lemma~3.1(2)]{Allard_1975bnd}, and since $||V||(B)=0,\spt(||\delta V||_{sing})=B$ we find that by setting $g(x)=F(x)DG(x)$, the equality
    \begin{align*}
    \int FDG\cdot\dv(\delta V)&=\int_{\RR^{n+m}-B}FDG\cdot H(V)\dv||V||+\int FDG\cdot\dv(\delta V)_{sing}
        \\
        &=-\lim_{r\downarrow0}\frac{1}{r}\int_{0<dist(x,\spt(||\delta V||_{sing}))<r} F(x)DG(x)\cdot\grad^Vd_B(x)\intvar{x}
        \\
        &+\int_{\RR^{n+m}-B} FDG\cdot\dv(\delta V).
    \end{align*}

    Hence we have that $$-\lim_{r\downarrow0}\frac{1}{r}\int_{0<dist(x,\spt(||\delta V||_{sing}))<r} F(x)DG(x)\cdot\grad^Vd_B(x)\intvar{x}=\int FDG\cdot\dv(\delta V)_{sing}.$$

    By definition of $\varepsilon_G$, we find that $$-\varepsilon_G\lim_{r\downarrow0}\frac{1}{r}\int_{0<dist(x,\spt(||\delta V||_{sing}))<r} F(x)\intvar{x}\geq\int FDG\cdot\dv(\delta V)_{sing}.$$

    However, since the norm of $-FDG(x)$ at every point in $\spt(||\delta V||_{sing})$ is bounded by $F$, we conclude that $$\varepsilon_G||\partial V||(F)\leq-\int FDG\cdot\dv(\delta V)_{sing}\leq\int F\dv||\delta V||_{sing}.$$
\end{proof}

\section{Proof of Theorems \ref{Thm:Theorem2} and \ref{Thm:Theorem3}}

The results of the prior section imply that, provided we can establish theorem \ref{Thm:Theorem2}, theorem \ref{Thm:Theorem3} follows immediately.

\begin{proof}[Proof of Theorem \ref{Thm:Theorem2}]
    Suppose that $||\partial V||^+(1)<\infty$. Since any continuous function is bounded on any compact set, we have that $$||\partial V||^+(F)\leq[\sup_{x\in\spt(||V||)}|F(x)|]||\partial V||^+(1)<\infty.$$

    Since $||\partial V||^-(F)=-||\partial V||^+(-F)$ we also have that $$||\partial V||^-(1)\geq-[\sup_x|F(x)|]||\partial V||^+(1)>-\infty.$$

    Hence $||\partial V||^+(F),||\partial V||^-(F)$ are finite, and moreover $||\partial V||^+$ is a sublinear functional on the Banach space $C^0(\RR^{n+m})$.

    Let $\gamma^V$ be any functional dominated by $||\partial V||^+$, and Hahn-Banach implies there is at least, and notice that if we assume $F\geq0$ then $$\gamma^V(F)\geq-||\partial V||^+(-F)=||\partial V||^-(F)\geq0.$$

    Hence $\gamma^V$ is a positive linear functional on $C_c(\RR^{n+m})$, and thus it is a measure by Riesz. Moreover, the condition on the support is obvious.

    Now, for each $\varepsilon,r>0$ pick a smooth function $\beta_{r,\varepsilon}\geq0$ such that $\beta_{r,\varepsilon}=0$ on $[0,\varepsilon)$, $\beta_{r,\varepsilon}=1$ on $[r+5000\varepsilon,\infty)$ and $0\leq\beta_{r,\varepsilon}'\leq\frac{1}{(r+\varepsilon)}$.

    Let $F$ be non-negative and smooth, and set $$G(x)=F(x)[\beta_{r,\varepsilon}\circ dist(x,\spt(||\delta V||_{sing}))].$$ We find that since $G$ is then non-negative, smooth, and compactly supported away from $\spt(||\delta V||_{sing})$, by theorem \ref{Thm:Theorem1}, we have $$C_{n,m}\left(\int G^{n/(n-1)}\intvar{x}\right)^{(n-1)/n}\leq\int |\grad^VG|(x)+G(x)|H(V,x)|\intvar{x}.$$

    As $\varepsilon\rightarrow0$ this implies that \begin{align*}
        C_{n,m}\left(\int_{dist(x,\spt(||\delta V||))>r} F^{n/(n-1)}\intvar{x}\right)^{(n-1)/n}&\leq\int_{dist(x,\spt(||\delta V||))>0} |\grad^VF|+F|H(V,*)|\dv||V||
        \\
        &+\frac{1}{r}\int_{0<dist(x,\spt(||\delta V||))<r}F(x)\intvar{x}.
    \end{align*}

    Notice that $|\grad^VF|\leq |DF|$ and $H(V,x)\in L_{loc}^1(||V||)=L^1(||V||)$, since the varifold has locally bounded first variation and compact support, so that as $r\downarrow0$ the term on the left and the first term on the right converge to $$C_{n,m}\left(\int_{\RR^{n+m}-\spt(||\delta V||_{sing})} F^{n/(n-1)} \intvar{x}\right)^{(n-1)/n}$$ and $$\int_{\RR^{n+m}-\spt(||\delta V||_{sing})} |\grad^VF|(x)+F(x)|H(V,x)|\intvar{x}$$ respectively.

    On the other hand we know that $$\liminf_{r\downarrow0}\frac{1}{r}\int_{0<dist(x,\spt(||\delta V||))<r}F(x)\intvar{x}=||\partial V||^-(F)\leq\int F\dv\gamma^V.$$

    The conclusion follows.
\end{proof}

\begin{appendices}
\section{Convex Functions}\label{AppendixA}
    Let $F:\RR^n\rightarrow \RR$ be convex and let $G:B_1(0)\subseteq\RR^m\rightarrow\RR^n$ be $C^2$. Our goal is to show that $H=F\circ G$ is semiconvex with quadratic modulus.

    \begin{proposition}
        Let $F,G,H$ be as above, and $0<r<1$. Then there exists a constant $\lambda\in\RR$ such that $H(x)+\lambda|x|^2$ is convex on $B_{r}(0)$.
    \end{proposition}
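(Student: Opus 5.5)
The plan is to verify convexity of $H+\lambda|x|^2$ along every line segment in $B_r(0)$, by producing a quadratic lower bound for $H$ at each point of $B_r(0)$ that is uniform in the base point. Two standing facts do the work. First, a convex function $F$ on $\RR^n$ is locally Lipschitz. In particular it is Lipschitz with some constant $L$ on a convex neighbourhood of the compact set $G(\bar B_{r'}(0))$, where $r<r'<1$. Hence every subgradient $p\in\partial F(y)$ at a point $y$ of that set satisfies $|p|\leq L$. Second, $G$ is $C^2$ on $B_1(0)$, so $K\coloneqq\sup_{\bar B_{r'}(0)}|D^2G|<\infty$.

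Fix $x_0\in B_r(0)$ and choose any $p\in\partial F(G(x_0))$. By convexity, for all $x\in B_r(0)$,
\begin{equation*}
H(x)\geq H(x_0)+p\cdot\bigl(G(x)-G(x_0)\bigr).
\end{equation*}
Taylor's theorem with integral remainder on the convex set $B_r(0)$ gives $G(x)-G(x_0)=DG(x_0)(x-x_0)+R(x)$ with $|R(x)|\leq \tfrac{K}{2}|x-x_0|^2$. Setting $q\coloneqq DG(x_0)^*p$, we obtain
\begin{equation*}
H(x)\geq H(x_0)+q\cdot(x-x_0)-\tfrac{LK}{2}|x-x_0|^2.
\end{equation*}
With $\lambda\coloneqq LK/2$, the identity $|x|^2=|x_0|^2+2x_0\cdot(x-x_0)+|x-x_0|^2$ turns this into
\begin{equation*}
H(x)+\lambda|x|^2\geq H(x_0)+\lambda|x_0|^2+(q+2\lambda x_0)\cdot(x-x_0).
\end{equation*}
So $\tilde H\coloneqq H+\lambda|\cdot|^2$ has an affine minorant touching it at $x_0$, for every $x_0\in B_r(0)$.

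It remains to pass from supporting affine functions to convexity. Take $x_0=tx_1+(1-t)x_2$ with $x_1,x_2\in B_r(0)$, write $\ell$ for the affine minorant at $x_0$, and average $\tilde H(x_i)\geq \ell(x_i)$ with weights $t$ and $1-t$. This gives $t\tilde H(x_1)+(1-t)\tilde H(x_2)\geq \ell(x_0)=\tilde H(x_0)$, which is convexity of $\tilde H$ on $B_r(0)$.

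The only step needing care is the uniformity of $\lambda$, which is why the statement restricts to $r<1$. The Lipschitz bound must be uniform over the whole compact image $G(\bar B_{r'}(0))$, which follows from local Lipschitz continuity of convex functions on a compact set. The $C^2$ bound on $G$ must likewise be uniform on $\bar B_{r'}(0)$, which holds because this ball is compactly contained in $B_1(0)$.
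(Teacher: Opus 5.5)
Your proof is correct, and it takes a genuinely different route from the paper's. The paper mollifies $F$ and differentiates $H_\varepsilon=F_\varepsilon\circ G$ twice by the chain rule. The only possibly negative contribution to $D^2H_\varepsilon$ is $\sum_k\partial_kF_\varepsilon(G(x))\,\partial^2_{ij}G^k$. The paper bounds it by a constant depending on $L$, the $C^2$ norm of $G$ and the dimensions, so $H_\varepsilon+C|x|^2$ has positive semidefinite Hessian, and then passes to the uniform limit $\varepsilon\to0$.

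You never smooth anything. The subgradient inequality for $F$ at $G(x_0)$, combined with the second-order Taylor bound for $G$, gives a supporting affine function for $H+\lambda|x|^2$ at every $x_0\in B_r(0)$, and that yields convexity on the convex set $B_r(0)$ directly. Equivalently, you write $H$ on $B_r(0)$ as a supremum of the $C^2$ functions $x\mapsto F(y)+p\cdot(G(x)-y)$, over $y\in G(B_r(0))$ and $p\in\partial F(y)$ with $|p|\le L$. Their Hessians $\sum_kp_kD^2G^k$ are uniformly bounded below by $-LK$.

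What your route buys is a shorter argument with no limiting step and an explicit, dimension-free constant $\lambda=LK/2$. It also handles the Lipschitz hypothesis more cleanly. You correctly require $F$ to be Lipschitz on a full neighbourhood of the compact image, which is exactly what bounds all subgradients there. By contrast, the paper's term $\partial_kF_\varepsilon(G(x))\,\partial^2_{ij}G^k$ involves the full ambient gradient of $F_\varepsilon$. That gradient is not controlled by the Lipschitz constant of $F$ restricted to $G(B_r(0))$ alone, as the paper states it. The slip is harmless, since convex functions on $\RR^n$ are locally Lipschitz. The paper's computation, in turn, is the more familiar one and shows explicitly the curvature term of $G$ that causes the loss of convexity.
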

    \begin{proof}
        Notice that $\overline{G(B_{r}(0))}$ is compact, hence $F|_{G(B_{r}(0))}$ is Lipschitz, of constant $L>0$.

        Upon mollifying $F$ we define $H_\varepsilon=F_\varepsilon\circ G$ and notice that $H_\varepsilon\rightarrow H$ uniformly on $B_{r}(0)$. Furthermore, $\lip(F_\varepsilon|_{G(B_{r}(0))})\leq L$ and $F_\varepsilon$ is convex.

        Now, we have that $\partial_iH_\varepsilon=\sum_k\partial_kF_\varepsilon(G(x))\partial_iG^k$ and $\partial^2_{ij}H_\varepsilon=\sum_{k,l}\partial^2_{kl}F_\varepsilon(G(x))\partial_iG^k\partial_jG^l+\sum_{k}\partial_{k}F_\varepsilon(G(x))\partial^2_{ij}G$

        The second term is bounded by the $C^2$ norm of $G$, the dimensions $m$ and $n$, and $L$ times the identity matrix. Meanwhile for $v\in\RR^m$ we have that $$v^T(\partial^2_{ij}H_\varepsilon+C)v\geq v^T(\partial^2_{ij}F_\varepsilon)v\geq0$$
    \end{proof}

    By the theory of convex functions, namely \cite[Theorem~14.25]{Villani_2009}, it holds that for $\LL^n$-a.e. $x\in B_{r}(0)$ there is a Taylor expansion $$H(x+v)=H(x)+\grad H(x)\cdot v+\langle Av,v\rangle+o(|v|^2).$$

    Consider now a $C^2$ diffeomorphism $$\Phi(x+v)=\Phi(x)+D\Phi(x)\cdot v+D^2\Phi(x)(v,v)+o(|v|^2)$$ near $x$. It is clear that $$H(\Phi(x+v))=H(\Phi(x)+D\Phi(x)\cdot v+D^2\Phi(x)(v,v))+o(|v|^2)=H(\Phi(x)+[D\Phi(x)\cdot v+D^2\Phi(x)(v,v)])+o(|v|^2).$$ Hence, if we set $\tilde H(x+v)=H(\Phi(x+v))$ there is a Taylor expansion $$\tilde H(x+v)=\tilde H(x)+\grad \tilde H(x)\cdot v+\langle \tilde Av,v\rangle+o(|v|^2).$$

    If also $\hat G$ is a $C^2$ function such that $|\hat{G}(x+v)-G(x+v)|=o(|v|^2)$ then it is clear that $$F\circ\hat{G}(x+v)=F\circ G(x+v)+o(|v|^2)=H(x)+\grad H(x)\cdot v+\langle Av,v\rangle+o(|v|^2).$$

    In summary, we have \begin{proposition}
        Let $M$ be a $C^2$ submanifold on $\RR^n$, and let $G:B_1(0)\subseteq\RR^m\rightarrow\RR^n$ be a $C^2$ coordinate parameterization of $M$ near some point $p$. Then for $\LL^m$-a.e. $x\in B_1(0)$, $F\circ G$ has a second order Taylor expansion at $x$. 
        
        Moreover, if $H$ is another $C^2$ parameterization with $G(x)=H(y),y\in B_1(0)$, then $F\circ H$ has a second order Taylor expansion at $y$.

        If, instead, $H$ is a $C^2$ function such that the 2-jet of $G$ and $H$ at $x$ are equal, then $F\circ H$ has a second order Taylor expansion at $x$, and moreover, the Taylor expansion of $F\circ H$ is equal to that of $F\circ G$ at $x$.
    \end{proposition}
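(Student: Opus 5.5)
The plan is to reduce all three assertions to the Euclidean Alexandrov theorem on $\LL^m$, applied to a locally semiconvex function, combined with the chain-rule observations made just before the statement.

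For the first assertion, take $0<r<1$. By the preceding proposition there is $\lambda\in\RR$ such that $F\circ G+\lambda|x|^2$ is convex on $B_r(0)$. The Euclidean Alexandrov theorem \cite[Theorem~14.25]{Villani_2009} then shows that this convex function has a second order Taylor expansion at $\LL^m$-a.e. point of $B_r(0)$. Subtracting the smooth quadratic $\lambda|x|^2$ preserves the existence of such an expansion, so $F\circ G$ has one $\LL^m$-a.e. in $B_r(0)$. Taking $r=1-1/k$ and a countable union over $k$ of the exceptional null sets covers all of $B_1(0)$.

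For the second assertion, let $H$ be another $C^2$ parameterization. Near $y$ we can write $H=G\circ\Phi$ with $\Phi=G^{-1}\circ H$, where $G^{-1}$ is inverted on $M$ via graph coordinates (Appendix B). Since both parameterizations are $C^2$ embeddings, $\Phi$ is a $C^2$ diffeomorphism with $\Phi(y)=x$. Substituting the second order expansion of $\Phi$ at $y$ into the expansion of $F\circ G$ at $x=\Phi(y)$ gives the displayed computation preceding the statement. We expand $F\circ G(x+w)$ with $w=D\Phi(y)v+D^2\Phi(y)(v,v)+o(|v|^2)$, so $w=O(|v|)$. The remainder $o(|w|^2)$ is $o(|v|^2)$, and terms of order $|v|^3$ are absorbed. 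Collecting terms yields a quadratic expansion in $v$. This step needs the Taylor expansion of $F\circ G$ at $x$ itself, so the statement is read as: at points $y$ with $G(x)=H(y)$ for $x$ in the full-measure set above. Since $\Phi$ is a $C^1$ diffeomorphism, it maps null sets to null sets, so $y$ ranges over a full-measure set as well.

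For the third assertion, suppose $H$ has the same $2$-jet as $G$ at $x$, so $|H(x+v)-G(x+v)|=o(|v|^2)$. Both $G(x+v)$ and $H(x+v)$ stay in a compact set near $G(x)$, on which the convex $F$ is Lipschitz with some constant $L$. Hence
$$|F\circ H(x+v)-F\circ G(x+v)|\leq L\,o(|v|^2)=o(|v|^2).$$
Therefore the expansion of $F\circ G$ at $x$ is also an expansion of $F\circ H$. Second order Taylor polynomials are unique, so the two expansions coincide.

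The main obstacle I expect is the second assertion. There one must make sure the composed remainder is genuinely $o(|v|^2)$; Lipschitz continuity of $F$ alone only controls the error $F(\text{exact})-F(\text{truncated})$. The point is that the expansion of $F\circ G$ is applied at the base point $x$ with an increment $w$ that is $O(|v|)$. The remaining routine checks are that $\Phi$ is $C^2$ (not merely $C^1$) and that the quadratic form in the expansion transforms correctly.
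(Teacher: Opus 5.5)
Your proposal is correct and follows essentially the same route as the paper. It uses semiconvexity of $F\circ G$ plus the Euclidean Alexandrov theorem \cite[Theorem~14.25]{Villani_2009} for the first claim, and composition with a $C^2$ diffeomorphism (your transition map $\Phi=G^{-1}\circ H$) for the second. For the third it uses local Lipschitz continuity of $F$ with $|H(x+v)-G(x+v)|=o(|v|^2)$. The only cosmetic difference is that the paper first truncates $\Phi$ to its $2$-jet using Lipschitz continuity and then expands, whereas you substitute the exact increment $w=\Phi(y+v)-x$ directly, which is equally valid.
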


\section{Graph Coordinates}\label{AppendixB}

We recall some properties of graph coordinates which are, to the author's knowledge, folklore results:
\begin{theorem}
    Let $M^n\subseteq\RR^{n+m}$ be a $C^2$ submanifold and let $p\in M$. Then there exists an open ball $B^n_r(0)\subseteq\RR^n$, an open set $U\ni p$, an orthogonal transformation $O$ and a $C^2$ function $u:B^n_r(0)\rightarrow\RR^m$ such that:
    \begin{enumerate}
        \item $M^n\cap U=F(B^n_r(0))$, where $F(x)=O\circ(x,u(x))+p$.
        \item $F(0)=p$.
        \item $Du(0)=0$ and $T_pM^n=O(\RR^n\times\{0\}^m)=DF(0)\RR^n$.
        \item $\sff_p(DF(0)v,DF(0)v)=\hess(F(0))(v,v)$.
        \item In local coordinates the metric $g$ has the Taylor expansion $g(v)=Id+o(|v|)$.
    \end{enumerate}
\end{theorem}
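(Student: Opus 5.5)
The plan is to build graph coordinates from the implicit function theorem applied to a $C^2$ local defining map, and then read off items (1)--(5) by differentiating the parametrization at $0$.

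\textbf{Choice of frame and graph function.} Choose $O$ orthogonal so that $O(\RR^n\times\{0\}^m)=T_pM^n$, and set $\tilde M=O^{-1}(M^n-p)$. Then $\tilde M$ is a $C^2$ submanifold through $0$ with tangent space $\RR^n\times\{0\}^m$. Near $p$, $M^n$ is the zero set of a $C^2$ submersion $\Phi:\RR^{n+m}\to\RR^m$. Put $\tilde\Phi=\Phi(O\cdot+p)$. Its kernel at $0$ is $\RR^n\times\{0\}^m$, so the partial derivative in the last $m$ variables is invertible. The $C^2$ implicit function theorem then gives $r>0$, a neighborhood $\tilde U$ of $0$, and a $C^2$ map $u:B^n_r(0)\to\RR^m$ with $u(0)=0$ such that $\tilde M\cap\tilde U=\{(x,u(x))\}$. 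Take $U=O\tilde U+p$ and $F(x)=O(x,u(x))+p$; this gives (1) and (2).

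\textbf{First derivatives.} Differentiating $\tilde\Phi(x,u(x))=0$ at $0$ gives $\partial_x\tilde\Phi(0)+\partial_y\tilde\Phi(0)Du(0)=0$. Since $\partial_x\tilde\Phi(0)=0$ (the $x$-directions lie in the kernel), invertibility of $\partial_y\tilde\Phi(0)$ forces $Du(0)=0$. Hence $DF(0)v=O(v,0)$, which is (3).

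\textbf{Second fundamental form.} Work with the curve $t\mapsto F(tv)$, which lies in $M^n$. Its acceleration at $t=0$ is $D^2F(0)(v,v)=O(0,D^2u(0)(v,v))$. This vector lies in $O(\{0\}^n\times\RR^m)=T_p^\perp M$. The second fundamental form along $DF(0)v$ is the normal component of the acceleration of a curve in $M$ with that initial velocity. So $\sff_p(DF(0)v,DF(0)v)=D^2F(0)(v,v)$, and the normal projection is trivial here. This is (4), reading $\hess(F(0))$ as $D^2F(0)$.

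\textbf{Metric expansion.} In these coordinates $g_{ij}(x)=\partial_iF\cdot\partial_jF=\delta_{ij}+\partial_iu(x)\cdot\partial_ju(x)$. Since $u\in C^2$ and $Du(0)=0$, we have $|Du(x)|=O(|x|)$, so $g(x)=Id+O(|x|^2)$, which is in particular $Id+o(|x|)$. This is (5).

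\textbf{Where the difficulty lies.} Nothing here is deep. The only point needing care is the interpretation of the notation: whether $\sff$ carries the normal projection, and whether $\hess$ means $D^2F(0)$. Otherwise the content is the standard $C^2$ implicit function theorem, and I would cite it as such.
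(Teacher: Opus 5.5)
Your proof is correct and takes the same route as the paper, whose entire justification is that (1)--(5) ``follow immediately from the implicit function theorem.'' You supply the details the paper omits: the local $C^2$ defining submersion, $Du(0)=0$ from $\partial_x\tilde\Phi(0)=0$, normality of $D^2F(0)(v,v)=O(0,D^2u(0)(v,v))$, and $g_{ij}=\delta_{ij}+\partial_iu\cdot\partial_ju=\delta_{ij}+O(|x|^2)$. Your reading of (4), with $\sff_p$ the normal part of $D^2F(0)$, is the standard convention and makes (4) exact; with that convention, though, the paper's later expansion $F(w)=p+DF(0)w+\sff_p(DF(0)w,DF(0)w)+o(|w|^2)$ is missing a factor $\tfrac12$, which does not affect the statement you proved.
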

These facts are relatively standard and follow immediately from the implicit function theorem.

Combining this with appendix A we get:
\begin{corollary}
     Let $M^n\subseteq\RR^{n+m}$ be a $C^2$ submanifold, let $G:\RR^{n+m}\rightarrow\RR$ be locally convex and let $p\in M$. Then for $\HH^n$-a.e. $x\in M^n$ there exists a symmetric bilinear form $A_x\in T^*_xM^n\odot T^*_xM^n$ such that $$G(x+v+\sff_x(v,v))=G(x)+\grad^{M^n}G(x)\cdot v+A_x(v,v)+o(|v|^2),v\in T_xM^n$$
\end{corollary}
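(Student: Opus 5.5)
The plan is to work in the graph coordinates of the preceding theorem and reduce the statement to Appendix A. Near each point $p\in M^n$, take $F(x)=O\circ(x,u(x))+p$ on $B^n_r(0)$. Since $M^n$ is second countable, countably many such charts cover it. A countable union of $\HH^n$-null sets is null, and each $F$ is bilipschitz onto its image, so $\HH^n$-null sets in $M^n\cap U$ correspond to $\LL^n$-null sets in $B^n_r(0)$. It therefore suffices to prove the claim for $\HH^n$-a.e. $x$ in the image of a single chart.

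First I will apply the first proposition of Appendix A to $H=G\circ F$. Since $G$ is locally convex and $F$ is $C^2$, $H$ is semiconvex with quadratic modulus on a slightly smaller ball. Hence $H+\lambda|y|^2$ is convex, and by \cite[Theorem~14.25]{Villani_2009} $H$ has a second order Taylor expansion at $\LL^n$-a.e. $y\in B^n_r(0)$. Next, at such a point $y$ with $x=F(y)$, I will reparametrize around $x$ by the map $\Phi(v)=x+v+\sff_x(v,v)$ for $v\in T_xM^n$. This map does not land in $M^n$, so I pass to the graph chart centered at $x$, call it $F_x$, and use the change-of-chart part of the second proposition of Appendix A. This shows that $G\circ F_x$ has a second order Taylor expansion at $0$.

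By items (3) and (4) of the graph-coordinate theorem, the 2-jet of $F_x$ at $0$, after identifying $\RR^n$ with $T_xM^n$ via $DF_x(0)$, is exactly that of $v\mapsto x+v+\sff_x(v,v)$, up to the paper's normalization of $\sff$ with $P_2=\sff_x$. The last part of the second proposition of Appendix A then transfers the expansion to $G(x+v+\sff_x(v,v))$, with $o(|v|^2)$ error. Strictly, that proposition asks for a $C^2$ function with the same 2-jet. Here it is cleaner to argue directly: $G$ is locally Lipschitz and $|x+v+\sff_x(v,v)-F_x(v)|=o(|v|^2)$, so the two compositions differ by $o(|v|^2)$.

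It remains to identify the terms. The constant term is $G(x)$, the quadratic term defines the symmetric form $A_x$, and the linear term is the differential of $G\circ F_x$ at $0$. Since $DF_x(0)$ is an isometry onto $T_xM^n$, this linear term is $\grad^{M^n}G(x)\cdot v$, where the gradient is understood as the tangential gradient, defined a.e.

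The main obstacle is bookkeeping rather than analysis. The set of good points must be independent of the chart. Changing charts at $x$ is a $C^2$ diffeomorphism, and the second proposition of Appendix A shows that the existence of the expansion is preserved under it, so the good set is intrinsic. A smaller point is that the Taylor expansion of $H$ at $y$ sits at a point $y$ that is not the chart center. I will handle this by expanding around $y$ in the chart centered at $p$ and composing with the $C^2$ transition to the chart centered at $x$.
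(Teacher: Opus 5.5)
Your proposal is correct and follows essentially the same route as the paper: cover $M^n$ by countably many charts, obtain a.e.\ second-order expansions of $G$ composed with each chart from Appendix A, move the expansion at a good point to the graph chart centred there via the change-of-chart part of Appendix A, and then identify $A_x$ and the tangential gradient using that this chart agrees with $v\mapsto x+v+\sff_x(v,v)$ to second order. Your explicit use of the bilipschitz property to match $\HH^n$- and $\LL^n$-null sets, and of the local Lipschitz continuity of $G$ in the final transfer, fills in steps the paper leaves implicit.
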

\begin{proof}
    We first cover $M^n$ with a countable family of $C^2$ coordinate charts $\phi_i:B_1(0)\rightarrow \RR^{n+m}$ such that $\phi_i(B_{1/2}(0))$ still covers the manifold.

    Then by appendix \ref{AppendixA} for $\HH^n$-a.e. $x\in B_{1/2}(0)$ there is a second order Taylor expansion for $G\circ\phi_i$ at $x$. Moreover, we see that if $\psi$ is another coordinate chart such that $\psi(y)=\phi_i(x)$ then $G\circ\psi$ has a second order Taylor expansion at $y$.

    By taking $\psi=F$ as graph coordinates at $\psi_i(x)$, we deduce that there is a second order expansion of $G\circ F$ at $0$. But since, setting $w=DF(0)^{-1}v$ $F(w)=p+DF(0)w+\sff_p(DF(0)w,DF(0)w)+o(|w|^2)$ we get that $1/C|v|\leq|w|\leq C|v|$ and therefore \begin{align*}
        G(p+DF(0)w+\sff_p(DF(0)w,DF(0)w))&=G(F(w))+o(|w|^2)
        \\
        &=G(F(0))+\grad (G(F(w)))(0)\cdot w+\langle Aw,w\rangle+o(|w|^2)
        \\
        &=G(p)+\grad^{M^n}G(p)\cdot DF(0)DF(0)^{-1}v
        \\
        &+\langle A[DF(0)^{-1}]v,DF(0)^{-1}v\rangle+o(|v|^2).
    \end{align*}

    Hence if we set $A_x(v,v)=\langle A[DF(0)^{-1}]v,DF(0)^{-1}v\rangle$ the conclusion follows at $\phi_i(x)$. In particular, since each $\phi$ is a $C^2$ coordinate chart, and the number of charts is countable, this holds for $\HH^n$-a.e. $x\in M^n$.
\end{proof}

We will write $\hess^{M^n}(G)(x)=A_x$ when it exists, and extend to $\RR^{n+m}$ by taking $$\hess^{M^n}(G)(x)(w,w)= \hess^{M^n}(G)(x)(\tan(M^n,x)w,\tan(M^n,x)w).$$

We also identify it with the linear map $\RR^{n+m}\rightarrow T_xM^n\subseteq\RR^{n+m}$ in the natural way.

We now let $N$ be the smooth manifold defined by $v\mapsto x+v+\sff_x(v,v)$. Notice here that if $\hess^{M^n}(G)(x)$ exists, then $\hess^{N}(G)(x)$ exists, they are equal, and moreover $\grad^NG(x)=\grad^{M^n}G(x)$.

\begin{lemma}
    $\hess^{M^n}(G)(x)$ has the property that $$G(y)=G(x)+\grad^{M^n}G(x)\cdot (y-x)+\hess^{M^n}(G)(x)((y-x),(y-x))+o(d_{N}^2(x,y)),y\in N$$ and $$G(y)=G(x)+\grad^{M^n}G(x)\cdot (y-x)+\hess^{M^n}(G)(x)((y-x),(y-x))+o(d_{M^n}^2(x,y)),y\in M^n.$$
\end{lemma}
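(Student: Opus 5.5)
The plan is to reduce both expansions to the one already established in the preceding corollary,
$$G(x+v+\sff_x(v,v))=G(x)+\grad^{M^n}G(x)\cdot v+A_x(v,v)+o(|v|^2),\quad v\in T_xM^n .$$
The key device is to parametrize points of $N$ (respectively of $M^n$) near $x$ by their tangential projection $v=\Tan(M^n,x)(y-x)$. We then compare $y-x$ with $v$ inside the linear and quadratic terms, and compare the intrinsic distances with $|v|$.

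\textbf{The manifold $N$.} Every $y\in N$ has the form $y=x+v+\sff_x(v,v)$, so the displayed expansion gives the value $G(y)$ directly. Two substitutions then turn it into the claimed formula.

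First, $\grad^{M^n}G(x)\in T_xM^n$ and $\sff_x(v,v)\perp T_xM^n$. Hence $\grad^{M^n}G(x)\cdot(y-x)=\grad^{M^n}G(x)\cdot v$ exactly.

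Second, $\hess^{M^n}(G)(x)$ was extended by precomposition with $\Tan(M^n,x)$. Hence $\hess^{M^n}(G)(x)(y-x,y-x)=A_x(v,v)$ exactly.

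It remains to note that $N$ is a smooth graph over $T_xM^n$ with vanishing first derivative at $x$. Therefore $d_N(x,y)$, $|y-x|$ and $|v|$ are mutually comparable near $x$, and $o(|v|^2)=o(d_N^2(x,y))$. This settles the first formula.

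\textbf{The manifold $M^n$.} Use the graph coordinates $F$ at $x$ from the theorem in this appendix, and let $\pi$ denote the nearest point projection onto $N$ (if needed, a local $C^2$ retraction onto $N$). By item (4) and the $C^2$ regularity, a point $y=F(w)\in M^n$ agrees with $\tilde y=x+DF(0)w+\sff_x(DF(0)w,DF(0)w)\in N$ up to $o(|w|^2)$. So $|y-\tilde y|=o(d_{M^n}^2(x,y))$, again because $F$ is bi-Lipschitz and the intrinsic and extrinsic distances are comparable.

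Since $G$ is locally convex, it is locally Lipschitz. This gives $|G(y)-G(\tilde y)|\le C|y-\tilde y|=o(d_{M^n}^2)$. The right-hand side polynomial changes by at most
$$|\grad^{M^n}G(x)||y-\tilde y|+C|y-x|\,|y-\tilde y|=o(d_{M^n}^2)$$
when $\tilde y$ is replaced by $y$. Applying the $N$-expansion at $\tilde y$, with $d_N(x,\tilde y)\sim d_{M^n}(x,y)$, yields the second formula.

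\textbf{Main obstacle.} The delicate step is the claim $|F(w)-\tilde y|=o(|w|^2)$ with precisely the $\sff$ normalization used in the paper. This requires $F(w)=x+DF(0)w+\sff_x(DF(0)w,DF(0)w)+o(|w|^2)$, which is the same identity already used in the preceding corollary's proof. I would invoke it verbatim from there, so that the constant convention for $\sff$ is consistent, and treat the remaining estimates as routine.
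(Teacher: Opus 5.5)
Your proposal is correct and matches the paper's proof. For the first formula you use, as the paper does, the orthogonality of $\grad^{M^n}G(x)$ to $\sff_x(v,v)$, the $\Tan(M^n,x)$-precomposition of the Hessian, and the comparability of $d_N$, $|y-x|$ and $|v|$; for the second you pass to $M^n$ through graph coordinates, relying on the same expansion $F(w)=x+DF(0)w+\sff_x(DF(0)w,DF(0)w)+o(|w|^2)$ that the paper invokes. Your local-Lipschitz comparison of $G(y)$ with $G(\tilde y)$, and of the quadratic polynomial at $y$ versus $\tilde y$, just spells out the paper's terse ``the first Taylor expansion implies the second,'' and you are right that this step depends on reading $\sff$ as the quadratic Taylor coefficient, since otherwise a factor $\tfrac12$ appears.
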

\begin{proof}
    Notice that if we write $y=x+v+\sff_x(v,v)$ then $\grad^{M^n}G(x)\cdot\sff_x(v,v)=0$, so $\grad^{M^n}G(x)\cdot (y-x)=\grad^{M^n}G(x)\cdot v$, and similarly $\hess^{M^n}(G)(x)((y-x),(y-x))=\hess^{M^n}(G)(x)(v,v)$.

    If we recall that nearby $x$ the intrinsic and extrinsic distance are equivalent, this reduces to the statement that $$G(y)=G(x)+\grad^{M^n}G(x)\cdot v+\hess^{M^n}(G)(x)(v,v)+o(|v+\sff_x(v,v)|^2).$$

    On the other hand, $v+\sff_x(v,v)=v+O(|v|^2)$, so $o(|v+\sff_x(v,v)|^2)=o(|v|^2)$, and thus this reduces to the statement that $$G(y)=G(x)+\grad^{M^n}G(x)\cdot v+\hess^{M^n}(G)(x)(v,v)+o(|v|^2)$$ which is true by definition of $\hess^{M^n}(G)(x)(v,v)$.

    To conclude the second statement, consider graph coordinates $F$ at $x$. Then since $F$ is bilipschitz onto its image near $x$, and since $F(w)= p+DF(0)w+\sff_p(DF(0)w,DF(0)w)+o(|w|^2)$, the first Taylor expansion implies the second.
\end{proof}

Putting everything together, we deduce

\begin{proposition}
\label{prop:smoothapprox}
    Suppose that $\hess^{M^n}(G)(x)$ exists. Then there exists a smooth function $u$ such that $|u(y)-G(y)|=o(d^2_N(x,y)),y\in N$ and $|u(y)-G(y)|=o(d^2_{M^n}(x,y)),y\in M^n$. Moreover, we may take $$u(y)=G(x)+\grad^{N}G(x)\cdot (y-x)+\hess^{N}(G)(x)((y-x),(y-x)).$$
\end{proposition}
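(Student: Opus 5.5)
The plan is to take $u$ to be the explicit quadratic polynomial
$$u(y)=G(x)+\grad^{N}G(x)\cdot (y-x)+\hess^{N}(G)(x)((y-x),(y-x)),$$
which is a polynomial in $y$ on $\RR^{n+m}$ and hence smooth. Everything then reduces to the preceding lemma plus the identifications of $\grad^N,\hess^N$ with $\grad^{M^n},\hess^{M^n}$.

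\textbf{Step 1: identify the ingredients of $u$.} As remarked just before the preceding lemma, whenever $\hess^{M^n}(G)(x)$ exists we have $\hess^{N}(G)(x)=\hess^{M^n}(G)(x)$ and $\grad^NG(x)=\grad^{M^n}G(x)$. This holds because $N$ and $M^n$ have the same $2$-jet at $x$, namely $v\mapsto x+v+\sff_x(v,v)$. The same conclusion also follows from the last part of the proposition in appendix \ref{AppendixA}, which says that $C^2$ maps with equal $2$-jets give the same second order Taylor expansion of a convex function composed with them. Consequently $u$ coincides with the polynomial
$$G(x)+\grad^{M^n}G(x)\cdot(y-x)+\hess^{M^n}(G)(x)(y-x,y-x).$$

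\textbf{Step 2: read off the two estimates.} The first expansion of the preceding lemma gives exactly $G(y)-u(y)=o(d_N^2(x,y))$ for $y\in N$. The second expansion gives $G(y)-u(y)=o(d_{M^n}^2(x,y))$ for $y\in M^n$. Both claimed bounds follow.

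\textbf{Main obstacle.} The only point needing care is the convention that $\hess^{M^n}(G)(x)$ is extended to $\RR^{n+m}$ by precomposing with $\Tan(M^n,x)$. This makes $u$ well defined off $M^n$ and $N$. It also means that the normal components $\sff_x(v,v)$ of $y-x$ contribute nothing to the linear or quadratic terms of $u$, since $\grad^{M^n}G(x)$ is tangential; this is the same observation used in the proof of the lemma.

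Beyond this, the argument only requires local equivalence of intrinsic and extrinsic distance, which makes the $o(\cdot)$ terms robust. The remaining verification is routine.
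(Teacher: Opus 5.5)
Your proposal is correct and is essentially the paper's own argument. The paper gives no separate proof: it obtains the proposition by ``putting everything together'', namely the preceding lemma and the remark that $\hess^{N}(G)(x)=\hess^{M^n}(G)(x)$ and $\grad^NG(x)=\grad^{M^n}G(x)$, so the explicit quadratic $u$ satisfies both estimates, which is exactly your Steps 1 and 2.
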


\end{appendices}
\printbibliography

\end{document}